\documentclass{article}

\usepackage[utf8]{inputenc}
\usepackage{geometry, mathtools, amsmath, amsfonts, amssymb, amsthm, cases, thmtools, tabularx, appendix, xcolor, bbm, titling, bm, enumitem}
\usepackage{tikz,lipsum,lmodern}
\usepackage[most]{tcolorbox}
%\usepackage{emoji}

%\renewbibmacro{in:}{%
%  \ifentrytype{article}
%    {}
%    {\bibstring{in}%
%     \printunit{\intitlepunct}}}
%\DeclareFieldFormat{pages}{#1}
%\addbibresource{bib.bib}
\bibliographystyle{acm}
\geometry{margin=1in}

\newtheorem{theorem}{Theorem}[section]
\newtheorem{definition}[theorem]{Definition}
\newtheorem{proposition}[theorem]{Proposition}
\newtheorem{lemma}[theorem]{Lemma}
\newtheorem{corollary}[theorem]{Corollary}
\newtheorem{remark}[theorem]{Remark}

\numberwithin{equation}{section} 

\newcommand{\eps}{\varepsilon}
\newcommand{\R}{\mathbb{R}}
\usepackage[colorlinks=true, pdfstartview=FitV, linkcolor=blue,citecolor=blue, urlcolor=blue]{hyperref}

\DeclareMathOperator*{\esssinf}{ess\,inf}
\DeclareMathOperator*{\esssup}{ess\,sup}
\title{Duality solutions to the hard-congestion model for the dissipative Aw-Rascle system}
%% singular limit pressure? congestion permitting pressure?
\author{Nilasis Chaudhuri\thanks{University of Warsaw, Poland; nchaudhuri@mimuw.edu.pl}, Muhammed Ali Mehmood\thanks{Imperial College London, London, United Kingdom; muhammed.mehmood21@imperial.ac.uk}, Charlotte Perrin\thanks{Aix Marseille Univ, CNRS, I2M, Marseille, France; charlotte.perrin@cnrs.fr}, Ewelina Zatorska\thanks{University of Warwick, United Kingdom; ewelina.zatorska@warwick.ac.uk}}
\date{\today}
\begin{document}
\maketitle
\thispagestyle{empty}

\begin{abstract}
We introduce the notion of duality solution for the hard-congestion model on the real line, and additionally prove an existence result for this class of solutions. Our study revolves around the analysis of a generalised Aw-Rascle system, where the offset function is replaced by the gradient of a singular function, such as $\rho_{n}^{\gamma}$, where $\gamma \to \infty$. We prove that under suitable assumptions on the initial data, solutions to the Aw-Rascle system converge towards the so-called duality solutions, which have previously found applications in other systems which exhibit compressive dynamics. We also prove that one can obtain weak solutions to the limiting system under stricter assumptions on the initial data. 
Finally, we discuss (non-)uniqueness issues.
\end{abstract}

\bigskip
\noindent{\bf Keywords:} Aw-Rascle system, hard-congestion limit, duality solutions.
	
\medskip
\noindent{\bf MSC:} 35Q35, 35B25, 76T20, 90B20.

\section{Introduction}
We study the following free-congested system on the spatial domain $\Omega= \R$:
\begin{subnumcases}{\label{eq:limit-system}}
\partial_t \rho + \partial_x(\rho u) = 0, \\
\partial_t(\rho u + \partial_x \pi) + \partial_x \big((\rho u + \partial_x \pi) u \big) = 0,\label{eq:limit-system-mom}\\
0 \leq \rho \leq 1, ~ (1-\rho) \pi = 0, \pi \geq 0. \label{eq:limit-system-constraint}
\end{subnumcases}
 This system is also known as the ‘hard-congestion model'. The variable $\rho$ represents the density while $u$ denotes the velocity. The  quantity $\pi$ is an unknown potential which plays a key role in the dynamics of the system. In particular, it appears in the exclusion constraint $(1-\rho)\pi = 0$ of \eqref{eq:limit-system-constraint} which reflects the two-phase nature of the system. This constraint tells us that in regions where $\rho < 1$ (the free/compressible phase) we must have $\pi = 0$ and when $\rho = 1$ (the congested/incompressible phase) the potential $\pi$ activates. The free-congested system finds many applications in the study of phenomena which involve congestion, such as traffic flow, crowd dynamics \cite{vauchelet2017incompressible} and granular flows \cite{perrin2018one}. The system \eqref{eq:limit-system} can also be related to the constrained Euler equations which have been studied by Berthelin and Preux and Maury, and has also previously been formally derived by Lefebvre-Lepot and Maury in \cite{lefebvre2008micro}. Lagrangian solutions to this system were also constructed by Perrin and Westdickenberg in \cite{perrin2018one}. For a more complete overview of recent results concerning free-congested fluid models, we refer to \cite{perrin2018overview}. Weak solutions (in the sense of distributions) to the hard-congestion model were also very recently obtained in \cite{HCLMehmood, HCL} on the domain $\Omega = \mathbb{T}$ by studying the asymptotic limit passage of solutions to the generalised Aw-Rascle system
\begin{subequations} 
\newcommand{\mystrut}{\vphantom{\pder{}{}}}
\begin{numcases}{}
    \partial_{t} \rho + \partial_{x}(\rho u) = 0, &\text{ in } $ (0, T)\times \Omega, $ \label{AR-1} \\[1ex]
    \partial_{t} (\rho w) + \partial_{x} (\rho w u) = 0, &\text{ in } $ (0, T)\times \Omega, $ \label{AR-2} \\[1ex]
    w = u + \partial_{x}p(\rho). \label{AR-3}
\end{numcases}
\end{subequations} 
The Aw-Rascle system, whose origin can be traced back to the papers by Aw and Rascle \cite{aw2000resurrection} and Zhang  \cite{zhang_non-equilibrium_2002}, models the evolution of traffic flow in one spatial dimension. Here, the quantities $u$ and $w$ respectively refer to the actual and desired velocities of agents. The global existence of solutions to the above generalised Aw-Rascle system \eqref{AR-1}-\eqref{AR-3} for $n$ fixed with the offset function
\begin{equation} \label{pnINTRO}
    p(\rho) = \rho^{\gamma_{n}}, \quad \gamma_{n} \to \infty, 
\end{equation}
was obtained in \cite{HCLMehmood}. 
There, it was shown that as $n \to \infty$, there exists a subsequence of such solutions which converges towards $(\rho, u, \pi)$ a solution of the hard-congestion model \eqref{eq:limit-system}. The offset function \eqref{pnINTRO} represents an approximation of $\pi$ for fixed $n$. Since \eqref{pnINTRO} is non-singular for $n$ fixed, the maximal density constraint is relaxed at the approximate level which allows inter-penetrability between the free and congested phases. The same limit passage was also studied in \cite{HCL} where the offset function was instead taken to be 
\begin{equation} \label{pnSINGULARintro}
    p(\rho) = \eps \frac{\rho^{\gamma}}{(1-\rho)^{\beta}}, \quad \gamma \ge 0, ~ \beta > 1, ~ \eps \to 0.
\end{equation}
In this case, the limit potential $\pi$ is approximated by the singular potential \eqref{pnSINGULARintro} which diverges as the approximate density $\rho_{n}$ approaches 1 from below. The potential plays the role of a barrier, which is physically significant. For instance, in the context of collective motion, this potential may represent the social repulsion forces exerted between agents.

 In this paper, we are interested in weaker/measure-valued solutions.
Indeed, measure-valued solutions have been shown to be relevant from the modeling point of view when studying heterogeneity properties, for instance in multi-component flows~\cite{bresch2015} or for clusters formation in traffic flows~\cite{leveque2001}.
The main goal of this paper is to extend the previous existence results for the system \eqref{eq:limit-system} to solutions where the velocity $u$ is bounded and $\partial_{x}\pi$ is a measure. 
In this situation, the equation \eqref{eq:limit-system-mom} cannot be understood in the distributional sense since the product $u \partial_{x} \pi$ is not well-defined. 
Thus, we are required to work with a new notion of solution in order to obtain meaningful existence results. 
This scenario corresponds to the case where concentration phenomena appears in the congestion component of the system \eqref{eq:limit-system}. 
Our ambition is motivated by previous works on the pressureless gas equations~\cite{franccois1999duality, boudin2000solution}. 
It is known that concentration phenomena in the density (which can be thought of as the creation of Dirac masses in finite time) is linked to the compressive nature of the dynamics, which is understood here as a control on the positive part of $\partial_{x}u$. 
The desire for a robust notion of solution in this case led to the concept of 'duality solution' which was coined by Bouchut and James~\cite{bouchut1998one},
and this notion has been successfully applied to the pressureless gas equations by Bouchut and Brenier in~\cite{franccois1999duality} and by Boudin in~\cite{boudin2000solution} through a vanishing viscosity limit.
Vauchelet and James also studied the existence of duality solutions for one-dimensional aggregation equations \cite{jamesaggregationvauch} and chemotaxis \cite{james2013chemotaxis}. 
To the best of our knowledge there are currently no results concerning duality solutions for the hard-congestion model. 
We detail the definition and main properties of duality solutions in the next section. 
Our main result is the following.
\begin{theorem}{\label{thm:main}}
       Fix $T>0$ and let $(\rho^0,m^0 ,\pi^0)$ be such that
       \begin{align}
           0 ~ \leq ~ \rho^0 \leq 1, \quad  m^0 \in \mathcal{M}_{loc}(\mathbb{R}),
           \quad  \pi^0 \in BV_{loc}(\mathbb{R}).
       \end{align}
       Then there exists a duality solution $(\rho, m = \rho u + \partial_x \pi, \pi)$ to the hard-congestion model
       \begin{subequations} \label{HCL-LIMIT}
\newcommand{\mystrut}{\vphantom{\pder{}{}}}
\begin{numcases}{}
    \partial_{t} \rho + \partial_{x} (\rho u) = 0, \qquad (0,T) \times \mathbb{R}, \label{HCL-L1}   \\[1ex]
    \partial_{t} (\rho  u + \partial_{x}\pi) + \partial_{x}((\rho u + \partial_{x}\pi)u)  = 0,  \qquad (0,T) \times \mathbb{R},\label{HCL-L2} \\[1ex]
    0 \le \rho \le 1,~ (1-\rho)\pi = 0,~ \pi \ge 0, \quad \text{ a.e. in } (0,T) \times \mathbb{R}, \label{HCL-L3}
\end{numcases}
\end{subequations}
        in the sense of Definition \ref{defndualityHCL} below.
    \end{theorem}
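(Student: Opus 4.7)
The natural strategy is to construct the duality solution of \eqref{HCL-LIMIT} as a limit of smoother solutions to the generalized Aw-Rascle system \eqref{AR-1}--\eqref{AR-3}, with either the polynomial offset \eqref{pnINTRO} (as in \cite{HCLMehmood}) or the singular offset \eqref{pnSINGULARintro} (as in \cite{HCL}). Since the prescribed momentum $m^0$ is only a Radon measure and $\pi^0$ is only $BV_{loc}$, the first step is to regularize by convolution, producing smooth approximations $(\rho^0_n, u^0_n)$ with $0 \le \rho^0_n < 1$, such that $\rho^0_n \rightharpoonup \rho^0$ in $L^\infty$-weak$\ast$, $\rho^0_n u^0_n \rightharpoonup m^0$ in $\mathcal{M}_{loc}(\mathbb{R})$, and the approximate potential $\pi_n(\rho^0_n)$ converges to $\pi^0$ in the appropriate sense. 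The desired velocity $w^0_n = u^0_n + \partial_x p_n(\rho^0_n)$ is then well defined at each level $n$.

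Applying the existence theory from \cite{HCLMehmood,HCL} to this regularized initial data yields smooth solutions $(\rho_n, u_n, \pi_n)$ of the approximate system satisfying several uniform estimates that I would try to recycle here: the maximum principle gives $0 \le \rho_n \le 1$; the transport structure of the $w$-equation propagates an $L^\infty$ bound on $w_n$ and hence on $u_n$; local bounds on $\pi_n$ and on $\partial_x \pi_n$ (as a measure) come from the approximate pressure law combined with the BV regularity of $\pi^0_n$; and, most importantly, a one-sided Lipschitz (Oleinik-type) bound of the form $\partial_x u_n \le C(t)$ should be available, reflecting the compressive character of the dynamics. This OSL bound is the key ingredient that makes the framework of Bouchut--James \cite{bouchut1998one} applicable.

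For the limit passage, I would combine strong compactness of $\rho_n$ in $L^1_{loc}$ (deduced from the continuity equation \eqref{HCL-L1} and the uniform bounds) with weak-$\ast$ compactness of $\partial_x \pi_n$ in $\mathcal{M}_{loc}$ and of $u_n$ in $L^\infty$. The constraint \eqref{HCL-L3} passes to the limit because at the approximate level $\pi_n$ is concentrated on $\{\rho_n \approx 1\}$. The OSL bound is stable under the limit and produces a limit velocity field $u$ against which one can define the backward generalized flow in the sense of Filippov/Poupaud--Rascle, thereby giving rigorous meaning to the products $\rho u$ and $u \partial_x \pi$ through the dual backward transport problem, exactly as in Definition~\ref{defndualityHCL}.

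The main obstacle I expect is the passage to the limit in the nonlinear products $(\rho_n u_n + \partial_x \pi_n) u_n$ appearing in \eqref{HCL-L2}, because both factors may concentrate: $u_n$ converges only weakly-$\ast$, while $\partial_x \pi_n$ converges only as a measure. The duality framework circumvents the ill-posedness of the product by reformulating the momentum equation through its adjoint, so the real work is to prove stability of the backward flow generated by $u_n$ under weak-$\ast$ convergence with a uniform one-sided Lipschitz bound, and to verify that the limit quintuple $(\rho,u,\pi)$, together with $m = \rho u + \partial_x \pi$ understood in this duality sense, satisfies every clause of Definition~\ref{defndualityHCL}. Once the stability of the dual problem is in hand, the remaining verification should follow by standard arguments.
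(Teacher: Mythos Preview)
Your overall strategy---approximate by the generalized Aw--Rascle system with the polynomial offset, derive uniform estimates including the crucial OSL bound, and pass to the limit using the Bouchut--James stability theory---matches the paper's approach. However, several of the specific intermediate estimates you invoke are either false or unjustified, and the paper handles those points differently.

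First, for the polynomial offset $p_n(\rho)=\rho^{\gamma_n}$, the approximate density does \emph{not} satisfy $\rho_n\le 1$; one only obtains a local bound $\rho_n\le C_K^{1/(\gamma_n+1)}$ (from the $W^{1,1}_{loc}$ control on $\pi_n$ and Sobolev embedding), and the constraint $\rho\le 1$ emerges only after the limit. Second, since $m^0$ is merely a locally finite measure, the regularized $w_n^0=u_n^0+\partial_x p_n(\rho_n^0)$ will in general not be uniformly bounded in $L^\infty$, so you cannot propagate an $L^\infty$ bound on $w_n$. The paper instead propagates $\|\rho_n w_n\|_{L^\infty_t L^1_x}\le\|\rho_n^0 w_n^0\|_{L^1_x}$ via a renormalization argument, and obtains the $L^\infty$ bound on $u_n$ separately by a maximum principle on the parabolic form of the momentum equation. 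The OSL bound $\partial_x u_n\le C$ is not automatic either: it comes from a maximum principle applied to the auxiliary quantity $V_n=\lambda_n(\rho_n)\partial_x u_n$, combined with a carefully chosen hypothesis on the approximate initial data (condition~\eqref{A3A}) and the lower bound on $\rho_n$.

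Third, and most seriously, you assert strong $L^1_{loc}$ compactness of $\rho_n$, but there is no uniform spatial regularity (no $BV$ bound, for instance) to support this. The paper never uses strong convergence of $\rho_n$; it relies instead on compensated compactness (Lions' Lemma~5.1), pairing the time regularity $\partial_t\rho_n\in L^\infty_t W^{-2,2}_x$ from the continuity equation with the uniform $W^{1,1}_{loc}$ bound on $u_n$ (which follows from OSL plus $L^\infty$) to identify the weak limits $\rho_n u_n\rightharpoonup\rho u$ and $(1-\rho_n)\pi_n\rightharpoonup(1-\rho)\pi$. For the momentum equation the paper does essentially what you propose: it notes that each $m_n=\rho_n w_n$ is a weak (hence duality) solution of $\partial_t m_n+\partial_x(u_n m_n)=0$ and then applies the Bouchut--James stability theorem directly, which packages your ``stability of the backward flow'' step as a black box.
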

 Our proof is based on the same power law approximation scheme as in~\cite{HCLMehmood}. This means that we study the asymptotic limit of solutions to \eqref{AR-1}-\eqref{AR-3} with the offset function \eqref{pnINTRO}. Note that the system is set on $\R$ and not the torus. A key feature of the approximate system \eqref{AR-1}-\eqref{AR-3} with \eqref{pnINTRO} is that for fixed $n$, the system \eqref{AR-1}-\eqref{AR-2} can be formally rewritten as the one-dimensional compressible pressureless Navier-Stokes equations
\begin{subequations} \label{A}
\newcommand{\mystrut}{\vphantom{\pder{}{}}}
\begin{numcases}{}
    \partial_{t} \rho_{n} + \partial_{x} (\rho_{n} u_{n}) = 0, &\text{ in } $ (0, T)\times \mathbb{R} , $  \label{a1} \\[1ex]
    \partial_{t} (\rho_{n} u_{n}) + \partial_{x}(\rho_{n} u_{n}^{2}) - \partial_{x}(\lambda_{n}(\rho_{n})\partial_{x}u_{n}) = 0, &\text{ in } $ (0, T)\times \mathbb{R},$  \label{a2}
\end{numcases}
\end{subequations}where $\lambda_{n}(\rho_{n}) = \rho_{n}^{2}p_{n}'(\rho_{n})$. The systems \eqref{AR-1}-\eqref{AR-2} and \eqref{a1}-\eqref{a2} are equivalent for sufficiently regular solutions, and in particular for the class of regular solutions which we will consider. 

Observe that for $\rho_n < 1$ (i.e. where there is no congestion), $\lambda_{n}(\rho_{n}) \to 0$ as $n \to +\infty$, so that our study is naturally connected to the one of Boudin~\cite{boudin2000solution} on the vanishing viscosity limit towards the pressureless gas system.

\medskip

Let us now highlight the key parts of the proof of Theorem \ref{thm:main}. 
    \begin{itemize}
        \item \textbf{Global existence of approximate solutions (i.e. for fixed $n$):} this is obtained using the construction of Burtea \& Haspot \cite{Burtea_2020}. We show it is possible to obtain a lower bound on the density and this leads to global existence.
        \item \textbf{Derivation of uniform estimates:} defining the approximate potential $\pi_n = \pi_n(\rho_n)$ such that 
        \begin{equation}\label{df:pi_n}
        \pi'_n(\rho) = \rho p'_n(\rho),
        \end{equation}
    
        we obtain the control of $\pi_n(\rho_n)$ in $W^{1,1}_{loc}$ as well as an upper bound on $\rho_n$. Another crucial estimate is the one-sided Lipschitz condition on $u_n$ which leads to compressive dynamics. This is crucial to have any hope of obtaining duality solutions in the limit.
        \item \textbf{Passage to the limit:} we make use of the stability property satisfied by duality solutions as well as compensated compactness arguments to demonstrate that our approximate solutions converge in some sense towards duality solutions to \eqref{HCL-LIMIT}.  
    \end{itemize}

Finally, let us also mention that we are able to prove the existence of weak solutions to the limit system if we impose additional conditions on the initial data (see Theorem \ref{limitexistence}). This extends the results of \cite{HCL, HCLMehmood} which were on $\mathbb{T}$, the one-dimensional torus. 

The paper is organised as follows. 
In Section~\ref{sec:duality-df}, we give an overview of the theory of duality solutions and provide a definition of duality solutions for our system~\eqref{eq:limit-system}. 
In Section~\ref{sec:mainthm} we state more precisely our main results, while the global existence of regular solutions for fixed $n$ is left to Section~\ref{sec:existence-n}. 
The uniform estimates needed to complete the limit passage are derived in Section~\ref{sectionUNIF}. 
In Section~\ref{sec:limit}, we pass to the limit and prove the existence of both weak and duality solutions to the hard-congestion model. 
Lastly, we discuss the relationship between weak and duality solutions in addition to the matter of uniqueness in Section~\ref{sec:final}.

\subsection{Duality solutions}{\label{sec:duality-df}}
The theory of duality solutions was introduced by Bouchut and James \cite{bouchut1998one} to provide a notion of solution for the transport equation
\begin{equation} \label{ncintro}
    \begin{cases}
 \partial_{t}u+a(t,x)\partial_{x}u=0 & \text{ in }(0,T) \times \mathbb{R}, \\
u(x,0) = u^{0}(x) \in BV_{loc}(\mathbb{R}),
\end{cases}
\end{equation} and its conservative counterpart
\begin{equation} \label{cintro}
    \begin{cases}
 \partial_{t}\mu+\partial_{x}(a(t,x)\mu)=0 & \text{ in }(0,T) \times \mathbb{R}, \\
\mu(x,0) = \mu^{0}(x) \in \mathcal{M}_{loc}(\mathbb{R}),
\end{cases}
\end{equation}
where $a \in L^{\infty}((0,T) \times \mathbb{R})$ satisfies the one-sided Lipschitz condition (OSLC)
\begin{equation} \label{OSLC}
\partial_{x}u \le \alpha \qquad \text{ in } \mathcal{D}',
\end{equation}
% \begin{equation} \label{OSLC}
%     (a(t,x)-a(t,y))(x-y) \le \alpha(t)(x-y)^{2}, \qquad \forall ~ x,y \in \mathbb{R},
% \end{equation} 
where $\alpha \in L^{1}(0,T)$. The issue with problem \eqref{ncintro} (and therefore with \eqref{cintro}) is that the product $a\partial_{x}u$ is not defined in general, since $a$ is an $L^{\infty}$ function and $\mu := \partial_{x}u$ a finite Radon measure. Thus, we cannot understand equations \eqref{ncintro}-\eqref{cintro} in the distributional sense and so we must look for a new notion of solution - the duality solution. Let us concentrate our attention on the conservative problem \eqref{cintro}. 
The definition of a duality solution may be formally motivated as follows. Suppose we are looking for a weak solution to problem \eqref{cintro}. Then multiplying by a test function $p$ which is sufficiently smooth and integrating by parts, we get for each $t \in [0,T]$,
\begin{align} \label{WFdual}
   - \int_{0}^{t}\int_{\mathbb{R}} \mu(s,x) (\partial_{t}p + a(s,x)\partial_{x}p)~dxds + \int_{\mathbb{R}}p(t,x)\mu(t,x)dx -\int_{\mathbb{R}}p(0,x)\mu(0,x)dx = 0.
\end{align}
A natural way to formulate a notion of weak (duality) solution is therefore as follows. We look for a measure $\mu$ which satisfies \begin{equation} \label{dualitydefnNAIVE}
    \frac{d}{dt}\int_{\mathbb{R}}p(t,x)\mu(t,x)dx = 0
\end{equation} for all $p$ which solve the backward transport equation
\begin{equation} \label{bpr}
    \partial_{t}p+a\partial_{x}p=0, ~ ~ ~p(T,\cdot)=p^{T}.
\end{equation} Note that \eqref{bpr} is the formal adjoint (or 'dual') of \eqref{cintro}. In order for our notion of duality solution to possess useful properties (e.g. stability) we must choose the function space for which we look for solutions to problem \eqref{bpr} carefully. We assume $p^{T} \in Lip_{loc}(\mathbb{R})$ and look for solutions $p \in Lip_{loc}((0,T) \times \mathbb{R})$. The space of such solutions is denoted by $\mathcal{L}$.  It is well-known that the OSLC \eqref{OSLC} guarantees the existence of solutions to \eqref{bpr}. The issue that immediately arises, however, is that there is no uniqueness in the class of solutions $p \in \mathcal{L}$, as highlighted by the counterexample $a(t,x) = -\text{sgn}(x)$ considered in \cite{bouchut1998one, lions2023transport}. Uniqueness in the space of test functions is pivotal if one is to obtain weak stability results (e.g. Theorem 4.3.2. of \cite{bouchut1998one}). Arguing by stability is the canonical way to prove the existence of duality solutions in practice since it allows us to more conveniently verify the definition \eqref{dualitydefnNAIVE}. The highlight of \cite{bouchut1998one} is therefore the introduction of the class of 'reversible solutions', which is a sub-class of $\mathcal{L}$ in which we have uniqueness. 
\begin{definition}[Reversible solutions]
    A solution $p \in \mathcal{L}$ is \textit{reversible} if $p$ is locally constant on the open set \begin{equation}
        \mathcal{V}_{e} = \{ (t,x) \in (0,T) \times \mathbb{R} : \exists ~p_{e} \in \mathcal{L} \text{ with } p_{e}(T,\cdot)=0 \text{ and } p_{e}(t,x) \ne 0               \}.
    \end{equation}
\end{definition}
The class of reversible solutions also gives way to a generalised backward flow.
\begin{definition}[Generalised backward flow \cite{bouchut1998one}] 
Let $T> 0$, $s \in (0,T]$ and $D_{b} = \{(t  \in \mathbb{R} ; 0 \le t \le s \}$.
%$D_{b} = \{(s,t  \in \mathbb{R}^{2} ; 0 \le t \le s \le T\}.If $s \in (0,T]$ 
The backward flow $X(s,\cdot,\cdot) \in Lip(D_{b} \times \mathbb{R})$ is defined as the unique reversible solution to 
\begin{subequations} 
\newcommand{\mystrut}{\vphantom{\pder{}{}}}
\begin{numcases}{} 
    \partial_{t} X  + a(t,x)\partial_{x}X = 0,~~  \text{ in } (0,s) \times \mathbb{R}, \\[1ex] 
   X(s,s,x) = x. 
\end{numcases}
\end{subequations} If $s=0$ then we set $X(0,0,x)=x$. \end{definition}
One can find more convenient characterisations of reversible solutions in \cite{bouchut1998one}. We can now define duality solutions. For this purpose we introduce the spaces
\begin{align*}
    &\mathcal{S}_{\mathcal{M}} := C([0,T]; \mathcal{M}_{loc,w}(\mathbb{R})),  \\[1ex]
    &\mathcal{S}_{L} := C([0,T]; L^{\infty}_{w^{\star}}(\mathbb{R})), \\[1ex]
    &\mathcal{T}_{BV} :=  L^{\infty}(0,T; BV_{loc}(\mathbb{R})).
\end{align*} Here, the subscripts $X_{w}$ and $X_{w^{\star}}$ are used to denote the function space $X$ with the weak and weak-$\star$ topologies respectively.
\begin{definition}[Duality solutions] \label{defnDualityORIGINAL}
    We say that $\mu \in \mathcal{S}_{\mathcal{M}}$ is a duality solution to \eqref{cintro} if for any $\tau \in (0,T]$ and any reversible solution $p$ with compact support in space to $\partial_{t}p + a\partial_{x}p = 0$ in $(0,\tau) \times \mathbb{R}$, the function $t \mapsto \int_{\mathbb{R}} p(t,x)\mu(t,dx)$ is constant on $[0,\tau]$.
\end{definition}
We refer the reader to \cite{bouchut1998one} for a more complete account of the theory of duality solutions. 

\begin{remark}
    The generalised backward flow leads to another characterisation for duality solutions. We may say that $\mu \in \mathcal{S}_{M}$ is a duality solution to $\partial_{t}\mu + \partial_{x}(a\mu) = 0$ with initial data $\mu^{0}$ if for all $t \in [0,T]$ and $g \in C_{c}(\mathbb{R})$,
    \begin{equation}
        \int_{\mathbb{R}}g(x)\mu(t,dx) = \int_{\mathbb{R}} g(X(0,t,x)) \mu^{0}(dx).
    \end{equation} This is taken as the definition of duality solutions by Lions and Seeger in \cite{lions2023transport}. Note that the flow generated by the reversible solutions coincides with the Filippov flow introduced in \cite{filippov1960differential}. It is also worthwhile to mention that in \cite{poupaud1997measure} Poupaud and Rascle used properties of the Filippov flow to prove the existence of a unique measure-valued solution to the conservation law \eqref{cintro}. This solution in fact coincides with the notion of duality solution.
\end{remark}
The following theorem collects the relevant results concerning duality solutions that we will need.
\begin{theorem}[Bouchut and James \cite{bouchut1998one}] \hphantom \\ \label{DUALITYresults}
    \begin{enumerate}
        \item Given $\mu^{0} \in \mathcal{M}_{loc}(\mathbb{R})$, there exists a unique $\mu \in \mathcal{S}_{\mathcal{M}}$ duality solution to \eqref{cintro} such that $\mu(0,\cdot) = \mu^{0}$.
        \item  The flux associated to a duality solution $\mu$ is denoted $a ~ \triangle ~\mu$ and is defined as \begin{equation}
            a  ~\triangle ~\mu := - \partial_{t} u,
        \end{equation} where $u(t,x) := - \int_{-\infty}^{x} \mu(t,y)~dy$. There exists a bounded Borel function $\hat{a}$ (known as the 'universal representative of $a$') such that $a = \hat{a}$ a.e. and $a ~ \triangle ~\mu = \hat{a}\mu$. This allows us to recover the distributional equality
        \begin{equation}
            \partial_{t}\mu + \partial_{x}(\hat{a}\mu) = 0, ~~ \text{ in } \mathcal{D}'.
        \end{equation}
        \item   Let $\{a_{n}\} \subset L^{\infty}((0,T) \times \mathbb{R})$ be a bounded sequence with $a_{n} \rightharpoonup^{*} a$ weak-* in $L^{\infty}((0,T) \times \mathbb{R})$. Assume $\partial_{x}a_{n} \le \alpha_{n}(t)$ where $\{\alpha_{n}\}$ is bounded in $L^{1}(0,T)$ with $\partial_{x}a \le \alpha \in L^{1}(0,T)$. Consider a sequence $\{\mu_{n}\}$ of duality solutions to \begin{equation}
        \partial_{t}\mu_{n} + \partial_{x}(a_{n}\mu_{n})= 0 \text{ in } (0,T) \times \mathbb{R},
    \end{equation}where $\mu_{n}(0, \cdot)$ is bounded in $\mathcal{M}_{loc}(\mathbb{R})$ and $\mu_{n}(0, \cdot) \rightharpoonup \mu^{0} \in \mathcal{M}_{loc}(\mathbb{R})$. Then $\mu_{n} \to \mu$ in $\mathcal{S}_{\mathcal{M}}$, where $\mu$ is the duality solution to \begin{equation}
        \partial_{t}\mu + \partial_{x}(a\mu)= 0 \text{ in } (0,T) \times \mathbb{R}, ~~ \mu(0,\cdot) = \mu^{0}.
    \end{equation}
    Moreover, $\hat{a}_{n}\mu_{n} \rightharpoonup \hat{a}\mu$ weakly in $\mathcal{M}_{loc}((0,T) \times \mathbb{R}).$
    \end{enumerate}
\end{theorem}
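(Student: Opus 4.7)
The plan is to handle the three parts of the theorem in the order dictated by the duality structure: uniqueness first (which needs only the definition and a rich enough class of reversible test functions), existence via an abstract duality construction, then the universal representative, and finally stability. For uniqueness of Part~1, I would argue by linearity: given a duality solution $\mu$ with $\mu(0,\cdot)=0$, fix $\tau \in (0,T]$ and $\phi \in C^\infty_c(\mathbb{R})$, and produce a reversible $p \in \mathcal{L}$ on $[0,\tau] \times \mathbb{R}$ solving $\partial_t p + a\partial_x p = 0$ with $p(\tau,\cdot) = \phi$. The constancy property in Definition~\ref{defnDualityORIGINAL} then forces $\langle \phi, \mu(\tau)\rangle = \langle p(0,\cdot), 0\rangle = 0$. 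Producing such $p$ for any prescribed Lipschitz terminal data is itself a preliminary: mollify $a$ to $a_\eps$, solve the classical backward transport problem to get $p_\eps$, extract a limit using the uniform Lipschitz bound on $p_\eps$ coming from the OSLC on $a$, and verify that the limit is locally constant on $\mathcal{V}_e$.

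For existence, once reversibility theory is in hand, I would build $\mu$ by pure duality. For each $t \in [0,T]$ and $\phi \in C_c(\mathbb{R})$, let $\psi_t$ be the (unique) reversible solution on $[0,t]\times\mathbb{R}$ with terminal value $\phi$, and set
$$\langle \phi, \mu(t)\rangle \;:=\; \langle \psi_t(0,\cdot), \mu^0\rangle.$$
Uniqueness of reversible solutions makes this well-defined and linear in $\phi$, and the uniform $\mathrm{Lip}_{loc}$-bound on $\psi_t$ guarantees $\mu(t) \in \mathcal{M}_{loc}(\mathbb{R})$. The required constancy property is then tautological: for any reversible $p$ on $[0,\tau]\times\mathbb{R}$, the map $t \mapsto \langle p(t,\cdot), \mu(t)\rangle$ returns the same value $\langle p(0,\cdot), \mu^0\rangle$ by construction. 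Time continuity in the $\mathcal{M}_{loc,w}$-topology is inherited from continuity of the backward flow in its time arguments.

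For Part~2, I would introduce the potential $u(t,x) := -\int_{-\infty}^x \mu(t,dy)$ (handled modulo a time-dependent constant if $\mu$ is not globally integrable), which is locally BV in $x$ and continuous in $t$. Formally differentiating the duality identity in $t$ then shows that $-\partial_t u$ is a locally finite Radon measure on $(0,T)\times\mathbb{R}$, which is the definition of the flux $a~\triangle~\mu$. The construction of the universal representative $\hat a$ is the technical heart: one must redefine $a$ on an $\mathcal{L}^1$-null set so that $\hat a(t,x)$ returns the pointwise speed of the Filippov flow through $(t,x)$ — a canonical selection which is forced to exist precisely because OSLC allows characteristics to merge but never split. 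With $\hat a$ so defined, the identity $a~\triangle~\mu = \hat a\mu$ is checked by examining atoms of $\mu$ (which travel along characteristics at speed $\hat a$) and reducing to Lebesgue equivalence elsewhere.

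Finally, for Part~3, the uniform OSLC bound $\partial_x a_n \le \alpha_n$ with $\{\alpha_n\}$ bounded in $L^1(0,T)$ yields uniform Lipschitz control on the backward flows $X_n(s,t,x)$ in $x$ plus an Ascoli modulus in $t$, so along a subsequence $X_n \to X_\infty$ uniformly on compacts in $D_b \times \mathbb{R}$. The weak-$\star$ convergence $a_n \rightharpoonup^{\star} a$, combined with the integral form of the characteristic ODE, identifies $X_\infty$ with the backward flow $X$ associated to $a$. Given any reversible $p$ for the limit equation with compact spatial support, one can take $p_n(t,x) := p(\tau, X_n(\tau, t, x))$ as reversible approximants and pass to the limit in $\langle p_n(t,\cdot), \mu_n(t)\rangle = \langle p_n(0,\cdot), \mu_n(0)\rangle$, which yields $\mu_n \to \mu$ in $\mathcal{S}_{\mathcal{M}}$; the weak convergence $\hat a_n \mu_n \rightharpoonup \hat a \mu$ then follows from the potential identity $\hat a_n \mu_n = -\partial_t u_n$ and the convergence of $u_n$ in $\mathcal{T}_{BV}$. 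The main obstacle throughout is the construction of $\hat a$ and verification of its compatibility with the flux in Part~2; by contrast, once reversibility theory and flow convergence are in place, the duality arguments for existence, uniqueness and stability are structural and essentially formal.
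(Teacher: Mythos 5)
This theorem is quoted from Bouchut--James \cite{bouchut1998one}; the paper itself gives no proof (it only cites the reference), so the comparison has to be with the original development rather than with anything internal to this paper. Your sketch does follow the broad architecture of that development: uniqueness of duality solutions by testing against reversible solutions with prescribed terminal data, existence by transporting $\mu^{0}$ along the generalised backward flow (equivalently the Filippov/Poupaud--Rascle representation mentioned in the paper's Remark), the flux defined through the potential $u(t,x)=-\int_{-\infty}^{x}\mu(t,dy)$, and stability via convergence of the backward flows. Two implicit steps in your ``structural'' parts deserve to be flagged: the tautological constancy in your existence construction uses that the restriction of a reversible solution on $[0,\tau]$ to $[0,t]$ is again the reversible solution with terminal datum $p(t,\cdot)$, and the well-definedness of $\mu(t)$ as a measure uses a maximum principle and finite speed of propagation for reversible solutions; both are true but are lemmas of the theory, not consequences of the definition alone.

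The genuine gaps are at the two points you dismiss most quickly. First, in Part 3 the identification of the limit flow: you claim that $a_{n}\rightharpoonup^{*}a$ ``combined with the integral form of the characteristic ODE'' identifies $X_{\infty}$ with the flow of $a$. This argument does not go through: $a_{n}$ converges only weak-$\star$ and $X_{n}$ only uniformly, so neither $a_{n}(s,X_{n}(s,t,x))$ nor the product $a_{n}\partial_{x}X_{n}$ in the transport equation passes to the limit by soft arguments. This is precisely the stability theorem for reversible solutions in \cite{bouchut1998one}, whose proof relies on the characterisation of reversible solutions as differences of monotone (in $x$) solutions and on compactness of the associated fluxes under the uniform OSLC $\partial_{x}a_{n}\le\alpha_{n}$ with $\{\alpha_{n}\}$ bounded in $L^{1}$; it is not ``essentially formal'', and without it both the convergence $\mu_{n}\to\mu$ in $\mathcal{S}_{\mathcal{M}}$ and the flux convergence $\hat a_{n}\mu_{n}\rightharpoonup\hat a\mu$ are unproved. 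Second, in Part 2 your verification of $a~\triangle~\mu=\hat a\mu$ ``by examining atoms of $\mu$ and reducing to Lebesgue equivalence elsewhere'' omits the singular continuous part of $\mu(t)$, which OSLC dynamics can certainly produce; the Bouchut--James construction handles all of $\mu$ at once by defining $\hat a$ through the (jointly constructed) flux/flow rather than by a pointwise case analysis, and showing $\hat a=a$ a.e.\ is itself part of that argument. As written, then, the proposal is an accurate roadmap of the standard proof but leaves its two hardest components---the stability of reversible solutions under weak-$\star$ perturbation of the coefficient and the construction of the universal representative---essentially unproved.
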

Finally, we propose the following definition of duality solutions for the hard-congestion model \eqref{HCL-L1}-\eqref{HCL-L3}.
\begin{definition} \label{defndualityHCL}
We say that $(\rho, m,\pi) \in \mathcal{S}_{L} \times \mathcal{S}_{\mathcal{M}} \times \mathcal{T}_{BV}$ is a duality solution to \eqref{HCL-L1}-\eqref{HCL-L3} on $[0,T]$ with initial data $(\rho^{0}, m^{0}, \pi^{0}) \in L^{\infty}(\mathbb{R}) \times \mathcal{M}_{loc}(\mathbb{R}) \times BV_{loc}(\mathbb{R})$ if there exists $u \in  L^{\infty}((0,T) \times \mathbb{R})$ and $\alpha \in L^{1}(0,T)$ with $\partial_{x}u \le \alpha $ on $[0,T] \times \mathbb{R}$ such that
    \begin{enumerate}[label=(\roman*)]
        \item $\rho$ is a distributional solution to $\partial_{t}\rho + \partial_{x}(\rho u ) = 0$ with $\rho(0, \cdot) = \rho^{0}$ and $0 \le \rho \le 1$,
        \item $m$ is a duality solution to $\partial_{t}m + \partial_{x}(m u ) = 0$ with $m(0,\cdot) = m^{0}$, in the sense of Definition \ref{defnDualityORIGINAL}, 
        \item the switching relation $(1-\rho)\pi = 0$ holds a.e. in $[0,T] \times \mathbb{R}$ with $\pi \ge 0$,
         \item the equality $m = \rho u + \partial_{x}\pi$ holds in the sense of measures.
    \end{enumerate}
\end{definition}
Notice that this differs from Definition \eqref{defnDualityORIGINAL} since we now have a system of equations and the velocity $u$ is an unknown obtained in the limit. Conditions (i) and (ii) are natural, while (iii) corresponds to \eqref{HCL-L3} and (iv) represents the recovery of the non-linear coupling between the equations. This definition is reminiscent of the definition provided by Bouchut and James in \cite{franccois1999duality} for a system of pressureless gases. 

%%%%%%%%%%%
\subsection{Main results}{\label{sec:mainthm}}
In this paper we adopt the Bochner space notation $X_{t}Y_{x} := X(0,T; Y(\mathbb{R}))$ for appropriate function spaces $X$ and $Y$. We first provide a precise definition of regular solutions to the system 
\eqref{a1}-\eqref{a2}, which are also classical.
\begin{definition}[Regular solutions] \label{defregsoln}
    Suppose $n \in \mathbb{N}$ is fixed and $p_{n}$ is given by \eqref{pnINTRO} and $\overline{\rho} \in (0,1]$. Assume further that $(\rho_{n}^{0}, u_{n}^{0}) \in H^{3}(\mathbb{R}) \times H^{3}(\mathbb{R})$ and $0 < \rho_{n}^{0}$. The pair $(\rho_{n}, u_{n})$ is called a regular solution to \eqref{a1}-\eqref{a2} on $[0,T]$ if \begin{equation*}
        \rho_{n} - \overline{\rho} \in C(0,T; H^{3}(\mathbb{R})), ~u_{n} \in C(0,T; H^{3}(\mathbb{R})) \cap L^{2}(0, T ; H^{4}(\mathbb{R))},
    \end{equation*} and $(\rho_{n}, u_{n})$ satisfy \eqref{a1}-\eqref{a2} in $\mathbb{R} \times [0,T]$. The pair $(\rho_{n}, u_{n})$ is known as a global regular solution to \eqref{a1}-\eqref{a2} if it is a regular solution on $[0,T]$ for any $T>0$.
\end{definition}
\begin{remark}
    Our definition of regular solutions is at the $H^{3}$ level so that the computations we carry out when obtaining uniform estimates are easily justified. Through a more refined argument one could work with less regular solutions and still obtain the same results. However, we do not go to such lengths in this paper since our focus is concentrated on solutions to the limit system.
\end{remark}
The first result concerns the global existence of regular solutions for $n$ fixed.
\begin{theorem}[Global existence for $n$ fixed]  \label{globalexistence}
      Let $\overline{\rho} \in (0,1]$ and $\rho_{n}^{0}, u_{n}^{0} : \mathbb{R} \to \mathbb{R}$ be such that $\rho_{n}^{0} > 0$ and \begin{equation}
        \left(\frac{1}{\rho_{n}^{0}} - (\overline{\rho})^{-1}, \rho_{n}^{0} - \overline{\rho}, u_{n}^{0}\right) \in (H^{3}(\mathbb{R}))^{3}. \label{initialreg}
    \end{equation} Then there exists a pair $(\rho_{n}, u_{n})$  with initial data $(\rho_n^{0}, u_n^{0})$ which is a global regular solution to \eqref{a1}-\eqref{a2} in the sense of Definition \ref{defregsoln}.
\end{theorem}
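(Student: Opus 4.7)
The plan is to follow the construction of Burtea and Haspot \cite{Burtea_2020}: obtain a local regular solution in the class of Definition \ref{defregsoln}, derive a priori bounds that preclude blow-up, and conclude by a standard continuation argument. The crucial point emphasised in the introduction is the propagation of a strictly positive lower bound on $\rho_n$, without which the viscosity coefficient $\lambda_n(\rho_n) = \gamma_n \rho_n^{\gamma_n+1}$ degenerates and the system ceases to be uniformly parabolic.

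For the local step, the natural approach is to work in Aw-Rascle variables, setting $w_n = u_n + \partial_x p_n(\rho_n)$. For smooth solutions, \eqref{a2} is equivalent to the pure transport equation $\partial_t w_n + u_n \partial_x w_n = 0$, and together with the continuity equation this yields a quasilinear parabolic system for $(\rho_n, w_n)$. Substituting $u_n = w_n - \partial_x p_n(\rho_n)$ into \eqref{a1} and using the definition \eqref{df:pi_n} gives
\[
\partial_t \rho_n + w_n \partial_x \rho_n + \rho_n \partial_x w_n - \partial_{xx} \pi_n(\rho_n) = 0,
\]
which is parabolic in $\rho_n$ as long as $\rho_n > 0$, since the effective diffusion coefficient $\pi_n'(\rho_n) = \rho_n p_n'(\rho_n)$ is then strictly positive. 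The initial regularity \eqref{initialreg} guarantees $\rho_n^0 - \bar\rho \in H^3(\mathbb{R})$, $u_n^0 \in H^3(\mathbb{R})$ and a strictly positive pointwise lower bound on $\rho_n^0$ inherited from $1/\rho_n^0 - \bar\rho^{-1} \in H^3(\mathbb{R})$; a standard fixed-point argument on the corresponding linearised parabolic system then produces a unique local solution in the required class on some interval $[0, T_*)$.

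The core of the proof is propagating pointwise bounds $0 < c(T) \le \rho_n \le C(T)$ up to any prescribed $T$. Since $w_n$ is transported by $u_n$, its $L^\infty$ norm is conserved, and a maximum principle applied to the parabolic equation above (at a spatial maximum one has $\partial_x \rho_n = 0$ and $\partial_{xx}\pi_n(\rho_n) \le 0$) combined with the bound on $w_n$ yields the upper bound on $\rho_n$. For the lower bound, which is the new ingredient compared with \cite{Burtea_2020}, the strategy is to analyse the evolution of $1/\rho_n$:
\[
\partial_t\!\left(\tfrac{1}{\rho_n}\right) + u_n \partial_x\!\left(\tfrac{1}{\rho_n}\right) = \tfrac{1}{\rho_n} \partial_x u_n,
\]
using the identity $\partial_x u_n = \partial_x w_n - \partial_{xx} p_n(\rho_n)$ together with a bound on $\partial_x w_n$ derived from transporting $\partial_x w_n$ along the characteristics of $u_n$. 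Once $\rho_n$ is bounded from above and below on $[0,T]$, the viscosity $\lambda_n(\rho_n)$ is uniformly positive and bounded, and classical $H^3$ energy estimates (differentiating the equations up to third order and invoking Grönwall's lemma), together with parabolic smoothing yielding $u_n \in L^2(0,T; H^4(\mathbb{R}))$, close the higher-order bounds. A standard continuation argument then extends the local solution to arbitrary $T > 0$.

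The main obstacle is indeed the lower bound on $\rho_n$: on the whole line $\mathbb{R}$ one cannot rely on a baseline coming from mass conservation, and the degeneracy of $\lambda_n$ at $\rho_n = 0$ would immediately destroy the parabolic structure needed for the $H^3$ bootstrap. Exploiting the transport structure of $w_n$ to control $1/\rho_n$ in $L^\infty$ along the flow of $u_n$ is the key analytical input distinguishing this proof from the framework in \cite{Burtea_2020}.
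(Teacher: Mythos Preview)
Your overall strategy matches the paper's: local existence, a blow-up criterion that reduces global existence to a positive lower bound on $\rho_n$, and then the lower bound via a maximum-principle argument on $1/\rho_n$ exploiting the transport structure in the $w_n$ variable. The paper invokes Theorem~\ref{localexistence} for the local step and Lemma~\ref{blowup} (following \cite{Burtea_2020}) for the continuation criterion, just as you outline.

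There is one imprecision worth fixing. You write that the bound on $\partial_x w_n$ comes from ``transporting $\partial_x w_n$ along the characteristics of $u_n$'', but $\partial_x w_n$ is \emph{not} purely transported: differentiating $\partial_t w_n + u_n \partial_x w_n = 0$ produces the stretching term $(\partial_x u_n)\,\partial_x w_n$, and $\partial_x u_n$ is exactly what you are trying to control. The quantity that satisfies a pure transport equation is $W_n := \rho_n^{-1}\partial_x w_n$, so that $\esssup_x W_n(t,\cdot) = \esssup_x W_n(0,\cdot) =: M_n^0$. Evaluating your equation for $1/\rho_n$ at a spatial maximum $x_t$ and writing $\partial_x u_n = \partial_x w_n - \partial_{xx}p_n(\rho_n)$, one uses $\partial_x\rho_n(x_t)=0$, $\partial_{xx}\rho_n(x_t)\ge 0$ to discard the $p_n$ contribution and obtains directly
\[
(P_n^M)'(t) \le \frac{\partial_x w_n(x_t,t)}{\rho_n(x_t,t)} = W_n(x_t,t) \le M_n^0,
\]
hence the explicit lower bound $\rho_n(t,x) \ge \big(M_n^0 t + (\inf_x \rho_n^0)^{-1}\big)^{-1}$. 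This is the precise mechanism the paper uses; once you replace $\partial_x w_n$ by $W_n$ your argument closes. Incidentally, the blow-up criterion only requires the lower bound on $\rho_n$, so your upper-bound step is unnecessary for this theorem (the paper postpones it to the uniform estimates in Section~\ref{sectionUNIF}).
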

%
% {\color{blue} The case $\overline{\rho} = 1$ is interesting, it seems that fully congested solutions would be captured at the limit. We need to double check that because it was not the case in previous studies on hard congestion limit.}
%
\begin{remark}
    This is a special case of the result proved by Burtea and Haspot \cite{Burtea_2020}. For fixed $n$ our system is very similar to theirs, with the only difference being that we have zero pressure. Nonetheless, we can follow the same steps to obtain global existence and even uniqueness, although we are not concerned with the latter for $n$ fixed. Notice that our initial density $\rho_{n}^{0}$ has a non-zero background state. In other words, $\rho_{n}^{0}(x) \to \overline{\rho} > 0$ as $|x| \to \infty$. This is chosen because we want to obtain strong solutions for fixed $n$ in order to justify later computations (i.e. when we prove the next two theorems). Choosing a zero background state is problematic since to the best of our knowledge it is not currently known whether strong solutions exist to the compressible Navier-Stokes system on the real line when the initial density has a zero background state.
\end{remark}

% \begin{remark}
%     The background state $\overline{\rho}$ may depend on $n$. In particular we could have $\overline{\rho} \to c \in [0,1]$ as $n \to \infty$.
% \end{remark}

\begin{theorem}[Existence of a duality solution to the hard-congestion model] \label{existencedual}
Let $T>0$ and $(\rho^0, m^0, \pi^{0}) \in \mathcal{S}_{L} \times \mathcal{S}_{M} \times \mathcal{T}_{BV}$. Suppose there exists a sequence of smooth initial data $(\rho_{n}^{0}, w_{n}^{0})$ with \begin{align}
        &\rho_{n}^{0} \rightharpoonup \rho^{0} \text{ weakly-* in } L^{\infty}_{loc}(\mathbb{R}), \label{Didconvergence1} \\[1ex]
        &\rho_{n}^{0}w_{n}^{0} \rightharpoonup m^{0} \text{ weakly in } \mathcal{M}_{loc}(\mathbb{R}), \label{Didconvergence2} \\[1ex]
    &\partial_{x}\pi_{n}^{0} \rightharpoonup \partial_{x}\pi^{0} \text{ weakly in } \mathcal{M}_{loc}(\mathbb{R})\label{Didconvergence3}.
    \end{align}
Furthermore, assume that $(\rho_n^{0}, u_n^{0} = w_{n}^{0} - \partial_{x}p_{n}(\rho_{n}^{0}))$ satisfies \eqref{initialreg} and that there exists $C>0$ independent of $n$ and a sequence $\{r_{n}^{0}\}_{n} \subset (0,1]$ with $r_{n}^{0} \to r^{0} \in [0,1]$ such that
\begin{align}
 &0 < r_{n}^{0} \le \rho_{n}^{0}(x) \le (1 + C)^{\frac{1}{\gamma_{n}+1}}  ~~\forall ~x \in \mathbb{R}, \label{A1rho} \\[1ex]
  % &{\color{blue} \text{ why not} \quad \rho_{n}^{0}(x) \le C^{\frac{1}{\gamma_n}} \text{ which the scaling associated to $\pi_n$} } \\[1ex]
&  \|\rho_{n}^{0}u_{n}^{0}\|_{L^{1}_{x}} + \|\rho_{n}^{0}w_{n}^{0}\|_{L^{1}_{x}}  \le C, \label{A2mom} \\[1ex]
&\esssup_{x\in \mathbb{R}} (\lambda_{n}(\rho_{n}^{0})\partial_{x}u_{n}^{0}) \le \frac{\gamma_{n}}{\big(M_{n}^{0}T + (r_{n}^{0})^{-1}\big)^{\gamma_n+1} } , \label{A3A} \\[1ex]
& \|u_{n}^{0}\|_{L^{\infty}_{x}} \le C, \label{A4uLinf}
    \end{align} where \begin{equation}
        M_{n}^{0} := \esssup_{x \in \mathbb{R}} \frac{\partial_{x}w_{n}^{0}}{\rho_{n}^{0}}.
    \end{equation}
    Then, the solution $(\rho_{n}, u_{n})$ on $[0,T]$ established in Theorem \ref{globalexistence} satisfies the following uniform bounds:
\begin{align}
    &\|\rho_{n}u_{n}\|_{L^{\infty}_{t}L^{1}_{x}} + \|\rho_{n}w_{n}\|_{L^{\infty}_{t}L^{1}_{x}} \le C, \label{unif1MOM}\\[1ex]
    &\partial_{x}u_{n} \le C,  \label{unif2OSL}
\end{align}
where $C>0$ is a generic constant independent of $n$, and for any compact $K \subset \R$ there exists $C' = C'(K) > 0$ independent of $n$ such that
\begin{align}
    &\|\rho_{n}\|_{L^{\infty}((0,T) \times K)} + \|u_{n}\|_{L^{\infty}(0,T; W^{1,1}(K))} +\|\pi_{n}\|_{L^{\infty}(0,T; W^{1,1}(K))} \le C'. \label{unif3RHOU} 
\end{align}
    Moreover, there exists a subsequence $(\rho_{n}, u_{n}, \pi_{n})$ of solutions to \eqref{AR-1}-\eqref{AR-2} with initial data $(\rho_{n}^{0}, u_{n}^{0}, \pi_{n}^{0})$ which converges to $(\rho , m, \pi)$, a duality solution of \eqref{HCL-L1}-\eqref{HCL-L3} with initial data $(\rho^{0}, m^{0}, \pi^{0})$. 
    Eventually, the entropy inequality
    \begin{equation} \label{dualityentropy}
        \partial_{t}|\mu| + \partial_{x}(\hat{u}|\mu|) \le 0, ~~ \text{ in } \mathcal{D}',
    \end{equation}
    is satisfied by $\mu = \rho, m$, where $\hat u$ is a universal representative of $u$ (see Theorem~\ref{DUALITYresults}).
\end{theorem}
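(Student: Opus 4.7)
The plan is to implement the power-law approximation scheme outlined in the introduction. Starting from the regular solutions $(\rho_n, u_n)$ provided by Theorem~\ref{globalexistence}, I first derive the uniform bounds \eqref{unif1MOM}--\eqref{unif3RHOU} and then pass to the limit using Theorem~\ref{DUALITYresults}(3) together with standard compactness arguments. The key structural fact exploited throughout is the equivalence, for our regular solutions, between \eqref{AR-1}-\eqref{AR-2} and the pressureless Navier--Stokes system \eqref{a1}-\eqref{a2} with degenerate viscosity $\lambda_n(\rho_n) = \rho_n^{2} p_n'(\rho_n)$, so the analysis is genuinely of vanishing-viscosity type in the spirit of \cite{boudin2000solution}.

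For the momentum bound \eqref{unif1MOM}, integrating \eqref{AR-1}--\eqref{AR-2} in $x$ and using \eqref{A2mom} gives that both $\int \rho_n u_n\, dx$ and $\int \rho_n w_n\, dx$ are time-independent. For the OSLC \eqref{unif2OSL}, dividing \eqref{AR-2} by $\rho_n > 0$ yields the pure transport equation $\partial_t w_n + u_n \partial_x w_n = 0$, and a direct computation using \eqref{AR-1} shows that $M_n := \partial_x w_n / \rho_n$ is itself transported along $u_n$, whence $\|M_n(t,\cdot)\|_{L^\infty_x} \le \|M_n^{0}\|_{L^\infty_x}$ for all $t \in [0,T]$. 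Combining this propagated bound with \eqref{A3A} and with the Lagrangian form of the continuity equation $(\partial_t + u_n \partial_x)\rho_n = -\rho_n \partial_x u_n$, I close a coupled estimate which simultaneously yields the upper density bound $\rho_n \le (1+C)^{1/(\gamma_n+1)}$ (tending to $1$ as $n\to \infty$) and the one-sided Lipschitz bound $\partial_x u_n \le C$ uniform in $n$. The $W^{1,1}_{loc}$ estimate on $\pi_n$ in \eqref{unif3RHOU} then follows by writing the momentum equation in conservative form $\partial_t(\rho_n u_n) + \partial_x(\rho_n u_n^{2}) + \partial_x \pi_n = 0$ and combining the uniform $L^\infty$ bounds on $\rho_n$ and $u_n$ (via \eqref{A4uLinf}) with \eqref{unif1MOM}.

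Given these uniform bounds, I extract subsequential limits $\rho_n \rightharpoonup \rho \in L^\infty$ weakly-$\ast$ with $0 \le \rho \le 1$, $u_n \rightharpoonup u \in L^\infty$ weakly-$\ast$ with $\partial_x u \le C$, $\pi_n \rightharpoonup \pi$ in $L^\infty_t BV_{loc}$, and $m_n := \rho_n w_n \rightharpoonup m$ in $\mathcal{S}_{\mathcal{M}}$. Condition~(i) of Definition~\ref{defndualityHCL} follows from an Aubin--Lions-type compactness for $\rho_n$ based on the uniform bounds on $u_n$; condition~(iii), the switching relation $(1-\rho)\pi = 0$ with $\pi \ge 0$, follows from the upper bound $\rho_n \le 1 + o(1)$ together with the explicit form of $p_n$, which forces $\pi_n(\rho_n) \to 0$ wherever $\rho < 1$; condition~(iv) comes from passing to the limit in $m_n = \rho_n u_n + \partial_x \pi_n$, using strong compactness of $\rho_n$ and the BV estimate on $\pi_n$. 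Condition~(ii) is the heart of the argument: since $m_n$ satisfies $\partial_t m_n + \partial_x(u_n m_n) = 0$ distributionally, $\{u_n\}$ is bounded in $L^\infty$ with a uniform OSLC and $u_n \rightharpoonup u$ weakly-$\ast$, and $m_n(0,\cdot) \rightharpoonup m^{0}$ by \eqref{Didconvergence2}, Theorem~\ref{DUALITYresults}(3) applied with $a_n = u_n$, $\mu_n = m_n$ directly yields that $m$ is a duality solution of $\partial_t m + \partial_x(u m) = 0$. The entropy inequality \eqref{dualityentropy} is then obtained by applying the same stability framework to $|\mu_n|$ for $\mu_n = \rho_n, m_n$.

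The main obstacle is the simultaneous closure of the OSLC and the upper density bound: the Lagrangian control of $\rho_n$ needs a quantitative OSLC whose constant itself depends on $\rho_n$ through $\lambda_n$, and the time-dependent denominator in \eqref{A3A} is tailored precisely so that this coupling closes with the sharp exponent $1/(\gamma_n+1)$ featured in \eqref{A1rho}. A secondary subtlety is the identification $m = \rho u + \partial_x \pi$ in the limit: although each of $\rho$, $u$ and $\pi$ has been identified separately, showing that $\rho u$ makes sense as (the regular part of) a Radon measure requires combining the strong compactness of $\rho_n$ with the BV bound on $\pi_n$ and the duality-solution framework for $m_n$, rather than a direct distributional argument, which would fail because $u$ is only $L^\infty$ and cannot in general be paired with a measure.
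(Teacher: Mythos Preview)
Your outline has the right overall architecture (uniform bounds, then stability of duality solutions via Theorem~\ref{DUALITYresults}(3)), but several of the key intermediate steps do not go through as written.

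First, for \eqref{unif1MOM}: integrating the conservation laws in $x$ only gives time-independence of $\int \rho_n u_n\,dx$ and $\int \rho_n w_n\,dx$, not of the $L^1$ norms $\int|\rho_n u_n|\,dx$ and $\int|\rho_n w_n|\,dx$. The paper obtains the $L^1$ control by an entropy argument: multiplying \eqref{a2} by $S'(u_n)$ (respectively \eqref{AR-2} by $S'(w_n)$) for convex $S$, discarding the dissipation, and letting $S$ approximate $|\cdot|$. Second, your conservative form ``$\partial_t(\rho_n u_n)+\partial_x(\rho_n u_n^2)+\partial_x\pi_n=0$'' is not correct: the Navier--Stokes form \eqref{a2} carries $-\partial_x(\lambda_n(\rho_n)\partial_x u_n)$, and $\lambda_n(\rho_n)\partial_x u_n$ is not $\partial_x\pi_n$; the paper instead obtains the local $L^1$ bound on $\pi_n$ by testing the evolution equation $\partial_t\pi_n + u_n\partial_x\pi_n + \lambda_n\partial_x u_n = 0$, after first controlling $\int\phi\,\lambda_n\partial_x u_n$ via the momentum equation. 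Third, the OSLC is not obtained by ``closing a coupled estimate'' from the transport of $M_n$ and the continuity equation: the paper proves a parabolic maximum principle for $V_n:=\lambda_n(\rho_n)\partial_x u_n$ (which has its own evolution equation with the right sign) and then divides by $\lambda_n(\rho_n)$ using the \emph{lower} bound on $\rho_n$ derived in the global-existence step; the upper density bound is obtained afterward, from the $W^{1,1}_{loc}$ control on $\pi_n$ and the Sobolev embedding, not simultaneously. Finally, you repeatedly invoke ``strong compactness of $\rho_n$'' (for (i), (iii), (iv)), but no such compactness is available from the estimates at hand: the paper passes to the limit in the products $\rho_n u_n$, $\rho_n u_n^2$, $(1-\rho_n)\pi_n$ by compensated compactness (Lions' lemma, using the $W^{-2,2}$ control on $\partial_t\rho_n$ and the $W^{1,1}_{loc}$ control on $u_n$ or $\pi_n$), not by strong convergence of $\rho_n$.
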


\bigskip

% {\color{blue}comment: Assumption~\eqref{A3A} rewrites as $\lambda_n (\rho^0_n) \partial_x u^0_n \leq \lambda_n (\underline{\rho}(T))$, where $\underline{\rho}(T):= \inf_{x\in \R} \rho_n(T,x)$. 
% Initially, as $n \to +\infty$, this condition forces $\partial_x u^0 \leq 0$ on the whole domain on the limit initial velocity, which is the condition imposed in Theorem~\ref{thm:main} (see~\eqref{hyp:u0lim}). 
% }

\begin{remark} \label{remarkDUALITY}
    As said in the introduction, the need to introduce the notion of a duality solution for the limiting system arises from the observation that the product $u \partial_{x} \pi$ which appears in \eqref{HCL-L2} is not well-defined if $u$ is a discontinuous function and $\partial_{x} \pi$ is a measure. Indeed, under the bounds \eqref{unif1MOM}-\eqref{unif3RHOU} it may happen that (up to a subsequence) $u_{n}$ converges to a discontinuous function and $\partial_{x}\pi_{n}$ converges to a measure. In such a case we cannot make sense of the hard-congestion model in the distributional sense and so we turn to the theory of duality solutions.
\end{remark}
If we additionally assume the uniform control of the norm $\|\sqrt{\rho_{n}^{0}}w_{n}^{0}\|_{L^{2}_{x}}$, we can assert the existence of weak solutions to the hard-congestion model.

    \begin{theorem}[Existence of a weak solution to the hard-congestion model] \label{limitexistence} 
    Assume $T>0$ and that $(\rho_n^{0}, u_n^{0})$ satisfies \eqref{initialreg} Furthermore, assume that there exists $C>0$ independent of $n$ 
    such that \eqref{A1rho}-\eqref{A4uLinf} hold as well as the additional condition
    \begin{equation}
        \|\sqrt{\rho_{n}^{0}}w_{n}^{0}\|_{L^{2}_{x}} \le C. \label{A5w}
    \end{equation} Then the solution $(\rho_{n}, u_{n})$ obtained in Theorem \ref{globalexistence} satisfies the uniform bounds \eqref{unif1MOM}-\eqref{unif3RHOU} and additionally
    \begin{align}
         &\|\pi_{n}\|_{L^{\infty}(0,T; W^{1,2}_{loc}(\mathbb{R}))} \le C, \label{unif4PI_L2} \\[1ex]
          &\|\sqrt{\rho_{n}}w_{n}\|_{L^{\infty}_{t}L^{2}_{x}} \le C, \label{unif5W}
    \end{align}
    where $C>0$ is independent of $n$. Suppose additionally that
    \begin{align}
        &\rho_{n}^{0} \rightharpoonup \rho^{0} \text{ weakly in } L^{2}(\mathbb{R}), \label{idconvergence1} \\[1ex]
        &\rho_{n}^{0}u_{n}^{0} \rightharpoonup \rho^{0}u^{0} \text{ weakly in } L^{2}(\mathbb{R}), \label{idconvergence2} \\[1ex]
        &\partial_{x} \pi_{n}(\rho_{n}^{0}) \rightharpoonup \partial_{x} \pi^0  \text{ weakly in } L^{2}(\mathbb{R}) \label{idconvergence3}.
    \end{align} Then there exists a subsequence $(\rho_{n}, u_{n})$ of solutions to \eqref{a1}-\eqref{a2} with initial data $(\rho_{n}^{0}, u_{n}^{0}, \pi_{n}^{0})$ which converges to $(\rho, u, \pi)$, a weak solution of \eqref{HCL-L1}-\eqref{HCL-L3} with initial data $(\rho^{0}, u^{0}, \pi^{0})$. The following entropy conditions hold for the limiting system:
    \begin{itemize}
        \item one-sided Lipschitz condition: \begin{equation} \label{OSLlimit}
            \partial_{x} u \le C ~~ \text{ in } \mathcal{D}',
        \end{equation}
        \item entropy inequality: \begin{equation}
            \partial_{t}(\rho S(u)) + \partial_{x}(\rho u S(u)) - \partial_{x} \Lambda_{S} \le 0 ~~ \text{ in } \mathcal{D}',
        \end{equation} for any convex $S \in C^{1}(\mathbb{R})$ and $\Lambda_{S} \in \mathcal{M}((0,T) \times \mathbb{R})$ with $|\Lambda_{S}| \le Lip_{S}|\Lambda|$, where $-\Lambda = -\overline{\lambda(\rho)\partial_{x}u} \in \mathcal{M}^{+}((0,T) \times \mathbb{R})$.
    \end{itemize}
\end{theorem}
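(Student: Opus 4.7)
The plan is to build on the framework of Theorem~\ref{existencedual} in three stages: derive the two \emph{new} uniform bounds, upgrade compactness to handle the nonlinear limits, and propagate the entropy structure.

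\textbf{Step 1 (new uniform bounds).} The bounds \eqref{unif1MOM}--\eqref{unif3RHOU} are inherited from Theorem~\ref{existencedual}, since its hypotheses are a subset of the present ones. To obtain \eqref{unif5W}, I divide \eqref{AR-2} by $\rho_n$ and use \eqref{AR-1} to get the pure transport equation $\partial_t w_n + u_n \partial_x w_n = 0$. Multiplying by $\rho_n w_n$ and invoking the continuity equation gives $\frac{d}{dt}\int \rho_n w_n^2 \, dx = 0$, so $\|\sqrt{\rho_n} w_n\|_{L^2_x}$ is conserved and controlled by \eqref{A5w}. For \eqref{unif4PI_L2}, I use the identity
\[
\partial_x \pi_n(\rho_n) \;=\; \rho_n\, \partial_x p_n(\rho_n) \;=\; \rho_n(w_n - u_n),
\]
together with the uniform density bound $\rho_n \le (1+C)^{1/(\gamma_n+1)} \le 2$ from~\eqref{A1rho} and the local $L^\infty$ control on $u_n$ coming from~\eqref{unif3RHOU} via the one-dimensional embedding $W^{1,1}(K)\hookrightarrow L^\infty(K)$. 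This yields $\|\partial_x\pi_n\|_{L^2(K)}\lesssim \|\sqrt{\rho_n}w_n\|_{L^2_x} + \|u_n\|_{L^\infty(K)}\|\rho_n\|_{L^2(K)}$.

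\textbf{Step 2 (compactness).} The OSLC \eqref{unif2OSL} combined with the local $L^\infty$ bound on $u_n$ confines $\{u_n\}$ to $L^\infty_t BV_{loc,x}$; Helly's theorem then extracts $u_n\to u$ pointwise a.e.\ and strongly in $L^p_{loc}$ for every finite $p$, with $\partial_x u\le C$. The new bound \eqref{unif4PI_L2} places $\{\pi_n\}$ in $L^\infty_t H^1_{loc,x}$; the transport-type equation $\partial_t\pi_n = -u_n\partial_x\pi_n - \lambda_n(\rho_n)\partial_x u_n$ gives enough time regularity for Aubin--Lions to supply $\pi_n\to\pi$ strongly in $L^2_{loc}$. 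Since $\pi_n(\rho) = \frac{\gamma_n}{\gamma_n+1}\rho^{\gamma_n+1}$ is strictly increasing in $\rho\in[0,2]$, this forces $\rho_n\to\rho$ a.e.\ and strongly in $L^p_{loc}$, with $0\le\rho\le 1$.

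\textbf{Step 3 (passage to the limit).} The continuity equation passes to the limit because $\rho_n u_n \to \rho u$ strongly in $L^1_{loc}$. For the momentum equation I rewrite the flux as $(\rho_n u_n + \partial_x \pi_n)u_n$: $\rho_n u_n$ converges strongly, $\partial_x\pi_n \rightharpoonup \partial_x\pi$ weakly in $L^2_{loc}$, and $u_n\to u$ strongly in $L^2_{loc}$, so a weak$\times$strong argument identifies the limit as $(\rho u+\partial_x\pi)u$. The switching relation follows pointwise: on $\{\rho<1\}$ the a.e.\ convergence $\rho_n\to\rho$ implies $\pi_n(\rho_n) = \frac{\gamma_n}{\gamma_n+1}\rho_n^{\gamma_n+1}\to 0$, so $\pi=0$ a.e.\ there, while $\pi\ge 0$ and $\rho\le 1$ pass directly.

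\textbf{Step 4 (entropy inequalities, main obstacle).} The OSLC \eqref{OSLlimit} is immediate from passing $\partial_x u_n\le C$ to the distributional limit. For the entropy inequality, I multiply \eqref{a2} by $S'(u_n)$ for convex $S\in C^1$, using $-S''(u_n)\lambda_n(\rho_n)(\partial_x u_n)^2\le 0$, to obtain
\[
\partial_t(\rho_n S(u_n)) + \partial_x(\rho_n u_n S(u_n)) - \partial_x\bigl(\lambda_n(\rho_n)\partial_x u_n\cdot S'(u_n)\bigr) \;\le\; 0.
\]
The \emph{main obstacle} is identifying the limit of the viscous flux $\lambda_n(\rho_n)\partial_x u_n$, which has no \emph{a priori} sign. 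Using the conservative form of the momentum equation together with the uniform $L^1_{loc}$ control of $\rho_n u_n^2$ and $\rho_n w_n$, one confines it to a bounded set in $\mathcal{M}_{loc}((0,T)\times\mathbb{R})$; extracting a weak-$*$ limit $-\Lambda$ and invoking the strong convergence of $u_n$ allows one to identify $\lambda_n(\rho_n)\partial_x u_n\cdot S'(u_n)\rightharpoonup \Lambda_S$ with $|\Lambda_S|\le \mathrm{Lip}_S\, |\Lambda|$, yielding the stated entropy inequality in the limit.
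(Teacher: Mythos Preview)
Your Step~2 contains a genuine gap that undermines Step~3. You claim that strong convergence of $\pi_n(\rho_n)$ in $L^2_{loc}$, together with strict monotonicity of $\rho\mapsto\pi_n(\rho)$, forces $\rho_n\to\rho$ a.e. This would be fine if the function $\pi_n(\cdot)$ were fixed, but here $\pi_n(\rho)=\tfrac{\gamma_n}{\gamma_n+1}\rho^{\gamma_n+1}$ with $\gamma_n\to\infty$: for any $\rho\in(0,1)$ one has $\pi_n(\rho)\to 0$, so the limit $\pi$ carries no information about $\rho$ on the set $\{\rho<1\}$. Concretely, a sequence $\rho_n$ oscillating between two values in $(0,1)$ still satisfies $\pi_n(\rho_n)\to 0$ strongly while $\rho_n$ fails to converge strongly. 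Consequently the strong convergence $\rho_n u_n\to\rho u$ you invoke in Step~3 is not established, and the continuity equation does not pass to the limit by your argument. A secondary issue is the appeal to Helly for $u_n$: the $L^\infty_t BV_{loc,x}$ bound gives spatial compactness only, and you supply no time regularity for $u_n$ (the equation for $\partial_t u_n$ involves $\rho_n^{-1}$, which is not uniformly bounded), so strong $L^p_{t,x,loc}$ convergence of $u_n$ is not justified either.

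The paper avoids both issues by never seeking strong convergence of $\rho_n$ or $u_n$. It passes to the limit in $\rho_n u_n$ and $\rho_n u_n^2$ via Lions's compensated compactness lemma (using only $\partial_t\rho_n\in L^\infty_t W^{-2,2}_x$ and the spatial equicontinuity of $u_n$ coming from the $W^{1,1}_{loc}$ bound), and handles the delicate product $u_n\,\partial_x\pi_n$ by a generalised compensated compactness result of Moussa, after first deriving a bound on $\partial_t\partial_x\pi_n$ in $L^1_t W^{-2,2}_x$ from the equation for $\rho_n p_n(\rho_n)$ and the $L^1_{loc}$ control of $\lambda_n(\rho_n)\partial_x u_n$. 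The switching relation is obtained by splitting $(1-\rho_n)\pi_n$ over $\{\rho_n\le1\}$ and $\{\rho_n>1\}$ rather than via pointwise convergence of $\rho_n$. Your Aubin--Lions argument for $\pi_n$ and your entropy Step~4 are essentially sound, but the core of the limit passage needs to be rebuilt along these compensated-compactness lines.
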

 % Let $T>0$ and $(\rho^{0}, u^{0}, \pi^{0}) \in (L^{\infty}(\mathbb{R}))^{2} \times H^{1}(\mathbb{R})$. Then there exists a triple $(\rho, u, \pi)$ which is a weak (distributional) solution to the hard-congestion model \eqref{HCL-L1}-\eqref{HCL-L3} on $[0,T]$ with initial data $(\rho^{0}, u^{0}, \pi^{0})$. Additionally, The following entropy conditions hold for the limiting system:
%     \begin{itemize}
%         \item one-sided Lipschitz condition: \begin{equation} \label{OSLlimit}
%             \partial_{x} u \le C ~~ \text{ in } \mathcal{D}',
%         \end{equation}
%         \item entropy inequality: \begin{equation}
%             \partial_{t}(\rho S(u)) + \partial_{x}(\rho u S(u)) - \partial_{x} \Lambda_{S} \le 0 ~~ \text{ in } \mathcal{D}',
%         \end{equation} for any convex $S \in C^{1}(\mathbb{R})$ and $\Lambda_{S} \in \mathcal{M}((0,T) \times \mathbb{R})$ with $|\Lambda_{S}| \le Lip_{S}|\Lambda|$, where $-\Lambda = -\overline{\lambda(\rho)\partial_{x}u} \in \mathcal{M}^{+}((0,T) \times \mathbb{R})$.
%     \end{itemize}
% \end{theorem} As in Theorem \ref{existencedual}, this weak solution is obtained as the limit of a subsequence of regular solutions to \eqref{a1}-\eqref{a2}. More specifically, assume $(\rho_n^{0}, u_n^{0})$ satisfies \eqref{initialreg} and that there exist $c, C>0$ independent of $n$ such that \eqref{A1rho}-\eqref{A4uLinf} hold as well as the additional condition
\begin{remark}
    The above result is the $\mathbb{R}-$analogue to Theorem 2.2, \cite{HCL} and Theorem 1.4, \cite{HCLMehmood}, which were both on the one-dimensional torus. The uniform assumptions \eqref{A1rho}-\eqref{A5w} are slightly stronger than those in \cite{HCL} since we can no longer exploit the boundedness of the domain. Indeed, the majority of the uniform estimates in the aforementioned works do not carry over to an unbounded domain. Nonetheless we show that using maximum-principle and regularisation arguments, we can still acquire the bounds needed to complete the limit passage on the real line.
\end{remark}
\begin{remark}
    The assumption \eqref{A5w} is crucial if we wish to obtain a weak solution to the limiting system. It implies the uniform $L^{2}_{x}$ control on $\partial_{x}\pi_{n}(\rho_{n}^{0})$ which prevents a measure from being generated in the initial layer (i.e. at $t=0$). This $L^{2}_{x}$ control can then be propagated for positive times which means that the weak limit of $\partial_{x}\pi_{n}$ is an $L^{2}_{x}$ function unlike in the previous theorem. Thus, the product $u \partial_{x} \pi$ is now well-defined and our limit $(\rho, u, \pi)$ is in fact a weak (distributional) solution to the limiting system.
\end{remark}
\begin{remark}
    The assumption \eqref{A1rho} is needed to control the potential $\pi_{n}$ in \eqref{unif3RHOU}, while \eqref{A2mom} gives us \eqref{unif1MOM}. The bound on the singular viscosity term \eqref{A3A} leads to the OSL condition \eqref{unif2OSL}. A very similar bound to \eqref{A3A} was also seen in \cite{HCL}. The bounds \eqref{A4uLinf} and \eqref{A5w} are propagated for positive times, leading to \eqref{unif5W} and the latter bound in \eqref{unif3RHOU}.
\end{remark}

%%%%%%%%%%%%%%%%%%%%%%
\subsection{Outline of the paper}
In Section~\ref{sec:existence-n} we focus on obtaining a global-in-time regular solution for $n$ fixed. 
The overall strategy is similar to what can be found in \cite{Burtea_2020,HCLMehmood}. 
In Section~\ref{sectionUNIF} we carry out uniform in $n$ estimates which will be needed to prove both Theorems \ref{limitexistence} and \ref{existencedual}. 
Then we complete the proofs of Theorems \ref{limitexistence} and \ref{existencedual} in Section~\ref{sec:limit}.

%%%%%%%%%%%%%%%%%%%%%
\section{Global existence of regular solutions for \texorpdfstring{$n$}{Lg} fixed}{\label{sec:existence-n}}
This section is split up into four parts. We first establish the existence of a local-in-time solution in Subsection~\ref{sec:local-ex} before carrying out standard energy estimates in Subsection~\ref{sec:energy}. 
In Section~\ref{sec:blowup} we prove a blow-up result which is the real-line analogue to Theorem 1.1. of \cite{Constantin_2020} or Lemma 2.6. of \cite{HCLMehmood}. 
In Section~\ref{sec:global-ex} we complete the proof of global existence.
%%%%%%%%%%%
\subsection{Local existence of regular solutions}{\label{sec:local-ex}}
The local existence of regular solutions is given in the following theorem. This is a classical result and therefore we omit the proof.
\begin{theorem}[Local existence for $n$ fixed] \label{localexistence}
    Let $\overline{\rho} \in (0,1]$ and $\rho_{n}^{0}, u_{n}^{0} : \mathbb{R} \to \mathbb{R}$ be such that $\rho_{n}^{0} > 0$ and \eqref{initialreg} holds. Then there exists $T^{*}>0$ and a pair $(\rho_{n}, u_{n})$ which is a regular solution to \eqref{a1}-\eqref{a2} on $[0,T^{*})$ with initial data $(\rho_{n}^{0}, u_{n}^{0})$ such that $\rho_{n} \ge 0$ on $[0,T^{\star})$.
\end{theorem}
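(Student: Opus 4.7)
The plan is to construct a regular solution via a standard Picard-type iteration that decouples the continuity equation from the momentum equation. Since $(1/\rho_n^0 - (\overline{\rho})^{-1},\ \rho_n^0 - \overline{\rho}) \in H^3(\mathbb{R})^2$ and $H^3(\mathbb{R}) \hookrightarrow L^\infty(\mathbb{R})$, we automatically have constants $\underline{\rho}, \overline{R}_0 > 0$ such that $\underline{\rho} \le \rho_n^0(x) \le \overline{R}_0$ for every $x \in \mathbb{R}$. This strict positivity is what renders the viscosity $\lambda_n(\rho_n) = \gamma_n \rho_n^{\gamma_n+1}$ uniformly elliptic, placing us in the standard non-degenerate parabolic regime for the momentum equation.

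More precisely, initialize $(\rho^{(0)}, u^{(0)}) = (\rho_n^0, u_n^0)$. Given $u^{(k)}$, define $\rho^{(k+1)}$ as the solution of the linear transport equation
\begin{equation*}
\partial_t \rho^{(k+1)} + \partial_x\bigl(\rho^{(k+1)} u^{(k)}\bigr) = 0, \qquad \rho^{(k+1)}|_{t=0} = \rho_n^0,
\end{equation*}
solved along the characteristics of $u^{(k)}$. A Gr\"onwall estimate along the flow gives
\begin{equation*}
\underline{\rho}\, e^{-t \|\partial_x u^{(k)}\|_{L^\infty_{t,x}}} \le \rho^{(k+1)}(t,x) \le \overline{R}_0\, e^{t \|\partial_x u^{(k)}\|_{L^\infty_{t,x}}},
\end{equation*}
and classical $H^3$ transport estimates (relying on Moser/Kato--Ponce-type commutator bounds) furnish control of $\rho^{(k+1)} - \overline{\rho}$ and $1/\rho^{(k+1)} - (\overline{\rho})^{-1}$ in $H^3(\mathbb{R})$. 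Given $\rho^{(k+1)}$, one then defines $u^{(k+1)}$ as the solution of the linear non-degenerate parabolic equation
\begin{equation*}
\rho^{(k+1)}\bigl(\partial_t u^{(k+1)} + u^{(k)} \partial_x u^{(k+1)}\bigr) - \partial_x\bigl(\lambda_n(\rho^{(k+1)}) \partial_x u^{(k+1)}\bigr) = 0
\end{equation*}
with initial datum $u_n^0$, obtaining $u^{(k+1)} \in C([0,T]; H^3(\mathbb{R})) \cap L^2(0,T; H^4(\mathbb{R}))$ by standard linear parabolic theory on $\mathbb{R}$.

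Energy estimates at the $H^3$ level---obtained by differentiating the equations up to three times, testing against the natural quantities, integrating by parts, and exploiting the positive sign of the viscous term together with the lower bound on $\rho^{(k+1)}$---close the iteration on a common short time interval $[0, T^\star)$ whose length depends only on the initial data and on $\underline{\rho}$, $\overline{R}_0$, $\gamma_n$ but not on $k$. Contraction of the differences $(\rho^{(k+1)} - \rho^{(k)}, u^{(k+1)} - u^{(k)})$ is then established in the weaker norm $L^\infty_t L^2_x$ (possibly shrinking $T^\star$ further), producing a strong limit $(\rho_n, u_n)$; interpolating with the uniform high-regularity bounds recovers the regularity class of Definition~\ref{defregsoln}, and passing to the limit in the equations shows that $(\rho_n, u_n)$ satisfies the original nonlinear system. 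The non-negativity $\rho_n \ge 0$ on $[0, T^\star)$ is inherited from the pointwise lower bound propagated at each iterate.

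The main obstacle is the preservation of a uniform strict lower bound on the density along the iteration: if $\rho^{(k)}$ were to vanish, the viscosity $\lambda_n(\rho^{(k)})$ would degenerate and the momentum step would lose its parabolic character, breaking the scheme. The regularity hypothesis $1/\rho_n^0 - (\overline{\rho})^{-1} \in H^3(\mathbb{R})$ is tailored precisely to this end: combined with the Gr\"onwall propagation above and the inductive $H^3$ control on $u^{(k)}$, it guarantees that all iterates remain bounded away from zero on a common short interval, after which the standard linear parabolic machinery proceeds without further difficulty.
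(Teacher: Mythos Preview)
Your proposal is correct and follows the standard Picard iteration scheme for quasilinear parabolic systems with density-dependent viscosity; this is precisely the ``classical'' argument the paper alludes to when it omits the proof entirely, and it is consistent with the construction of Burtea--Haspot cited there.
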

\subsection{Energy estimates}{\label{sec:energy}}
We now introduce some energy estimates which are important when extending the solution from local to global. The first estimate is obtained by multiplying \eqref{a2} by $u_{n}$ and integrating by parts in space and time.
\begin{lemma}[Basic energy]\label{basicenergy}
    Assume that $(\rho_{n}, u_{n})$ is a regular solution to \eqref{A} on the time interval $[0,T]$. Then,
    \begin{equation} \vspace{-10pt}
        \label{E2} 
        \| \sqrt{\rho_{n}}u_{n}\|^{2}_{L^{\infty}_{t}L^{2}_{x}} + 2\| \sqrt{\lambda_{n}(\rho_{n})}\partial_{x}u_{n}\|^{2}_{L^{2}_{t}L^{2}_{x}} = \underbrace{\| \sqrt{\rho_{n}^{0}}u_{n}^{0}\|^{2}_{L^{2}_{x}}}_{=: E_{1}}
    \end{equation} for all $t \in [0,T]$.
\end{lemma}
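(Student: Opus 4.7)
The plan is to multiply the momentum equation \eqref{a2} by $u_n$ and integrate in space and time, using the continuity equation \eqref{a1} to repackage the convective terms as a perfect time derivative of the kinetic energy density. Pointwise one has the algebraic identity
\begin{equation*}
u_n\bigl[\partial_t(\rho_n u_n) + \partial_x(\rho_n u_n^2)\bigr] = \partial_t\bigl(\tfrac{1}{2}\rho_n u_n^2\bigr) + \partial_x\bigl(\tfrac{1}{2}\rho_n u_n^3\bigr) + \tfrac{1}{2} u_n^2 \bigl(\partial_t \rho_n + \partial_x(\rho_n u_n)\bigr),
\end{equation*}
and the last bracket vanishes by \eqref{a1}. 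So after multiplying \eqref{a2} by $u_n$ I obtain
\begin{equation*}
\partial_t\bigl(\tfrac{1}{2}\rho_n u_n^2\bigr) + \partial_x\bigl(\tfrac{1}{2}\rho_n u_n^3\bigr) - u_n\, \partial_x\bigl(\lambda_n(\rho_n)\partial_x u_n\bigr) = 0.
\end{equation*}

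Next I would integrate over $x \in \mathbb{R}$. The regularity class in Definition \ref{defregsoln} gives $u_n \in H^{3}(\mathbb{R})$ and $\rho_n - \overline{\rho} \in H^{3}(\mathbb{R})$, so by Sobolev embedding $u_n$, $\partial_x u_n$ decay at infinity and $\rho_n$ is globally bounded; hence the flux term $\partial_x\bigl(\tfrac{1}{2}\rho_n u_n^3\bigr)$ integrates to zero, and one integration by parts on the viscous contribution, with vanishing boundary terms, produces
\begin{equation*}
\frac{d}{dt}\int_{\mathbb{R}} \tfrac{1}{2} \rho_n u_n^2\, dx + \int_{\mathbb{R}} \lambda_n(\rho_n)(\partial_x u_n)^2\, dx = 0.
\end{equation*}
Integrating in time from $0$ to $t \in [0,T]$ and multiplying by $2$ yields the claimed identity \eqref{E2}.

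The only subtlety is justifying the manipulations rigorously, namely (a) that the boundary terms at $x = \pm \infty$ vanish, and (b) that $t \mapsto \|\sqrt{\rho_n} u_n\|_{L^2}^2$ is absolutely continuous so that the time integration is legitimate. Both follow directly from the strong regularity $\rho_n - \overline{\rho},\, u_n \in C([0,T]; H^3(\mathbb{R}))$ with $u_n \in L^2(0,T; H^4(\mathbb{R}))$; in particular $\sqrt{\lambda_n(\rho_n)}\partial_x u_n \in L^2_t L^2_x$ since $\lambda_n(\rho_n) = \rho_n^2 p'_n(\rho_n)$ is a bounded smooth function of $\rho_n$. No real obstacle is expected, this being a standard multiplier computation; the argument is short enough that the proof reduces to writing out the identity above and noting the decay at infinity.
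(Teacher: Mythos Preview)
Your proposal is correct and follows exactly the approach indicated in the paper, which simply states that the estimate is obtained by multiplying \eqref{a2} by $u_n$ and integrating by parts in space and time. Your write-up is in fact more detailed than the paper's one-line justification, but the method is identical.
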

For the next estimate, we introduce the quantity
\begin{equation}
    H_{n}(\rho_{n}) := \frac{1}{\gamma_{n}+1}\rho_{n}^{\gamma_{n}+1}
\end{equation} 
and the relative functional
\begin{equation}
    H_{n}(\rho_{n} \vert \overline{\rho}) := \frac{1}{\gamma_{n}+1}\rho_{n}^{\gamma_{n}+1} - (\rho_{n} - \overline{\rho})\overline{\rho}^{\gamma_{n}} - \frac{1}{\gamma_{n}+1}\overline{\rho}^{\gamma_{n}+1},
\end{equation} 
associated to the limit value $\overline{\rho}$ prescribed at infinity.
% The following proposition collects key properties of the relative functional that we will need.
% \begin{proposition}
%     There exist non-negative constants $c(\eps), r_{1}, r_{2} \in \mathbb{R}$ such that $0 < r_{1} < r_{2}$ and \begin{equation} \label{lbrelenergy}
%             H_{n}(\rho_{n} | \eps) \ge c(\eps)\begin{cases} 
%       (\rho_{n} - \eps)^{2}, & r_{1} < \rho_{n} < r_{2}, \\
%       1 + \rho_{n}^{\gamma_{n}+1} & \text{ otherwise.}
%    \end{cases}
%         \end{equation}
% \end{proposition}
\begin{lemma}[Relative energy] Assume that $(\rho_{n}, u_{n})$ is a regular solution to \eqref{A} on the time interval $[0,T]$. Then,
    \begin{equation}
        \sup_t\int_{\mathbb{R}} H_{n}(\rho_{n}|\overline{\rho})~dx + \|\sqrt{\rho_{n}}w_{n}\|_{L^{\infty}_{t}L^{2}_{x}}^{2} + \|\sqrt{\rho_{n}}\partial_{x}p_{n}(\rho_{n})\|_{L^{2}_{t,x}}^{2} \le \|\sqrt{\rho_{n}^{0}}w_{n}^{0}\|_{L^{2}_{x}}^{2}
        +\int_{\mathbb{R}} H_{n}(\rho_{n}^0|\overline{\rho})~dx.
    \end{equation}
\end{lemma}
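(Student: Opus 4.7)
The plan is to combine a conservation law for $\sqrt{\rho_n}w_n$ with a relative entropy identity from which, through an integration by parts exploiting \eqref{AR-3}, the dissipation term $\|\sqrt{\rho_n}\partial_x p_n(\rho_n)\|_{L^2_x}^2$ naturally emerges.

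First, since \eqref{a2} is equivalent to $\partial_t(\rho_n w_n) + \partial_x(\rho_n u_n w_n) = 0$ for regular solutions via \eqref{AR-3}, I would multiply by $w_n$ and combine with \eqref{a1} to obtain the local conservation law
\[
\partial_t\bigl(\tfrac{1}{2}\rho_n w_n^2\bigr) + \partial_x\bigl(\tfrac{1}{2}\rho_n u_n w_n^2\bigr) = 0.
\]
Integration over $\mathbb{R}$, with the boundary contributions at $\pm\infty$ killed by the decay inherent to Definition~\ref{defregsoln}, then yields the exact equality
\[
\|\sqrt{\rho_n}\,w_n\|_{L^2_x}^2(t) = \|\sqrt{\rho_n^0}\,w_n^0\|_{L^2_x}^2 \quad \text{for all } t\in[0,T],
\]
which immediately delivers the $L^\infty_t L^2_x$ bound on $\sqrt{\rho_n}w_n$ with sharp constant.

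Second, I would multiply the continuity equation \eqref{a1} by the renormalising factor $H_n'(\rho_n) - H_n'(\overline{\rho}) = \rho_n^{\gamma_n} - \overline{\rho}^{\gamma_n}$. Using the identity $\rho_n H_n'(\rho_n) - H_n(\rho_n) = \pi_n(\rho_n)$, I obtain the local relative-entropy equation
\[
\partial_t H_n(\rho_n|\overline{\rho}) + \partial_x\bigl(H_n(\rho_n|\overline{\rho})\,u_n\bigr) + (\pi_n(\rho_n) - \pi_n(\overline{\rho}))\,\partial_x u_n = 0.
\]
The key trick is to substitute the decomposition $\partial_x u_n = \partial_x w_n - \partial_x^2 p_n(\rho_n)$ coming from \eqref{AR-3} into the last term, integrate over $\mathbb{R}$, and perform an integration by parts. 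Thanks to $\partial_x \pi_n(\rho_n) = \rho_n \partial_x p_n(\rho_n)$ from \eqref{df:pi_n}, the $\partial_x^2 p_n$ contribution produces exactly the sought dissipation:
\[
\int \bigl(\pi_n(\rho_n) - \pi_n(\overline{\rho})\bigr)\partial_x^2 p_n(\rho_n)\,dx = -\int \rho_n\bigl(\partial_x p_n(\rho_n)\bigr)^2 dx,
\]
leading, after a further integration by parts on the remaining $-\int(\pi_n(\rho_n) - \pi_n(\overline{\rho}))\partial_x w_n\,dx$, to the identity
\[
\frac{d}{dt}\int H_n(\rho_n|\overline{\rho})\,dx + \int \rho_n\bigl(\partial_x p_n(\rho_n)\bigr)^2 dx = \int \rho_n\,\partial_x p_n(\rho_n)\,w_n\,dx.
\]

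To close the estimate I would control the cross term on the right-hand side by Young's inequality in order to absorb half of the dissipation on the left,
\[
\int \rho_n\,\partial_x p_n(\rho_n)\,w_n\,dx \le \tfrac{1}{2}\int \rho_n\bigl(\partial_x p_n(\rho_n)\bigr)^2 dx + \tfrac{1}{2}\|\sqrt{\rho_n}\,w_n\|_{L^2_x}^2,
\]
and then invoke the $w_n$-conservation proven in the first step to replace $\|\sqrt{\rho_n}w_n\|_{L^2_x}^2$ by its constant initial value. Time integration followed by taking the supremum in $t$ then yields the combined bound. The main technical obstacle will be the justification of the boundary terms vanishing in the various integrations by parts on $\mathbb{R}$: since $\rho_n \to \overline{\rho} \in (0,1]$ at spatial infinity, only the \emph{relative} quantities $\rho_n - \overline{\rho}$ and $\pi_n(\rho_n) - \pi_n(\overline{\rho})$ decay, which is precisely why one must work with the relative functional $H_n(\cdot|\overline{\rho})$ rather than $H_n(\cdot)$ and why the identity for $\partial_t H_n(\rho_n|\overline{\rho})$ closes globally in $x$.
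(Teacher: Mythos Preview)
Your proposal is correct and follows essentially the same route as the paper: both derive the conservation of $\|\sqrt{\rho_n}w_n\|_{L^2_x}^2$ from \eqref{AR-2}, then obtain the relative-entropy balance by combining the continuity equation with the substitution $u_n = w_n - \partial_x p_n(\rho_n)$ and closing via Young's inequality. The only cosmetic difference is that the paper first rewrites \eqref{a1} as $\partial_t\rho_n + \partial_x(\rho_n w_n) = \partial_x(\rho_n\partial_x p_n)$ and multiplies by $H_n'(\rho_n)$ (then adds and subtracts the $\bar\rho$ terms, using mass conservation to kill the residual), whereas you multiply directly by $H_n'(\rho_n)-H_n'(\bar\rho)$ and substitute afterwards; your organization is slightly cleaner and makes the role of the relative quantities at infinity more transparent.
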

\begin{proof}
First, multiplying \eqref{AR-2} by $w_{n}$ and integrating by parts gives
\begin{equation} \label{rhowENERGYest}
    \|\sqrt{\rho_{n}}w_{n}\|_{L^{\infty}_{t}L^{2}_{x}} \le \|\sqrt{\rho_{n}^{0}}w_{n}^{0}\|_{L^{2}_{x}}.
\end{equation} Next we recall that due to the relationship $w_{n} = u_{n} + \partial_{x}p_{n}(\rho_n)$ the continuity equation can be rewritten as
\begin{equation}
    \partial_{t}\rho_{n} + \partial_{x}(\rho_{n}w_{n}) = \partial_{x}(\rho_{n}\partial_{x}p_{n}(\rho_{n})).
\end{equation} Multiplying this equality by $H_{n}'(\rho_{n}) = p_{n}(\rho_{n})$ and using the chain rule, we have
\begin{equation*}
    \partial_{t}H_{n}(\rho_{n}) + p_{n}(\rho_n)\partial_{x}(\rho_{n}w_{n}) - p_{n}(\rho_n)\partial_{x}(\rho_{n}\partial_{x}p_{n})=0.
\end{equation*} 
Adding and subtracting $(\rho_{n} - \overline{\rho})H_{n}'(\overline{\rho}) + H_{n}(\overline{\rho})$ and integrating over space leads to
\begin{equation*}
    \frac{d}{dt}\int_{\mathbb{R}} H_{n}(\rho_{n} | \overline{\rho}) + \frac{d}{dt}\int_{\mathbb{R}} (\rho_{n} - \overline{\rho}) ~dx + \frac{1}{2}\|\sqrt{\rho_{n}}\partial_{x}p_{n}(\rho_{n})\|_{L^{2}_{x}} 
    \leq \frac{1}{2}\|\sqrt{\rho_{n}}w_{n}\|_{L^{2}_{x}}.
\end{equation*} 
The result follows from noticing that the second term on the left hand-side vanishes due to the conservation of mass.
\end{proof}
%%%%%%%
\subsection{The blow-up lemma}{\label{sec:blowup}}
We first introduce a blow-up criterion.
\begin{lemma}[Criteria for blow-up of regular solutions] \label{blowup}
    Suppose $(\rho_{n}, u_{n})$ is a regular solution to \eqref{a1} - \eqref{a2} on $[0,T^{*})$ with initial data $(\rho_{n}^{0}, u_{n}^{0})$ satisfying \eqref{initialreg}. Then provided that 
    \begin{equation}
        \underline{\rho_{n}} := \inf_{t \in [0,T^{*})} \inf_{x \in \mathbb{R}} \rho_{n}(t,x) > 0, 
    \end{equation}
    we have that
    \begin{equation} \label{blow-up-statement}
        \sup_{t \in [0,T^{*})} \|\rho_{n}\|_{L^{\infty}(0,t; H^{4})} + \sup_{t \in [0,T^{*})} \|u_{n}\|_{L^{\infty}(0,t; H^{4})} + \sup_{t \in [0,T^{*})} \|u_{n}\|_{L^{2}(0,t; H^{5})} < +\infty,
    \end{equation} and therefore the solution can be extended to a larger time interval $[0,T)$, where $T > T^{*}$. In other words, the solution does not lose regularity unless the density reaches $0$ somewhere in the domain.
\end{lemma}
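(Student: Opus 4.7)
The plan is to show that the lower bound $\underline{\rho_n} > 0$ turns the momentum equation \eqref{a2} into a uniformly parabolic equation on $[0,T^*) \times \mathbb{R}$, which permits a bootstrap of regularity up to the $H^4$ level. Once the bound \eqref{blow-up-statement} is established, the norm of $(\rho_n(T^*-\varepsilon), u_n(T^*-\varepsilon))$ sits in a set where the local existence Theorem \ref{localexistence} applies with a uniform lifespan, contradicting the maximality of $T^*$ and yielding extension to a strictly larger interval.

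The first step is to obtain an upper bound on $\rho_n$. Exploiting the Aw-Rascle formulation, equation \eqref{AR-2} combined with \eqref{AR-1} gives the transport equation $\partial_t w_n + u_n \partial_x w_n = 0$, so that $\|w_n(t,\cdot)\|_{L^\infty}$ is conserved along the flow. Differentiating in $x$ and combining with continuity, one checks that $\partial_x w_n/\rho_n$ satisfies $D_t(\partial_x w_n/\rho_n) = 0$ as well, hence is controlled in $L^\infty$ by its initial norm. From the relation $u_n = w_n - \partial_x p_n(\rho_n)$ and the hypothesis $\rho_n \ge \underline{\rho_n}$, this translates into an $L^\infty$ bound on $\partial_x u_n$, whence the characteristic form $D_t \rho_n = -\rho_n \partial_x u_n$ yields an exponential upper bound $\rho_n(t,x) \le \overline{\rho_n}(T^*)$ on $[0,T^*)$.

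With $\rho_n$ now pinched between two positive constants, the coefficient $\lambda_n(\rho_n) = \gamma_n \rho_n^{\gamma_n+1}$ is bounded both above and below away from zero, and \eqref{a2} is a non-degenerate quasilinear parabolic equation. I would then perform successive $H^k$ energy estimates for $k=1,2,3,4$: apply $\partial_x^k$ to both the continuity and momentum equations, test respectively against $\partial_x^k \rho_n$ and $\partial_x^k u_n$, and control the nonlinear terms by Moser's product estimates in Sobolev spaces together with the embedding $H^1(\mathbb{R}) \hookrightarrow L^\infty(\mathbb{R})$. The viscous term generates the positive dissipation $\|\sqrt{\lambda_n(\rho_n)}\,\partial_x^{k+1} u_n\|_{L^2_x}^2$, which at $k=4$ provides exactly the $L^2_t H^5$ control of $u_n$ asserted in \eqref{blow-up-statement}. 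Gronwall's lemma applied to the sum $\|\rho_n - \overline{\rho}\|_{H^k}^2 + \|u_n\|_{H^k}^2$ closes each estimate inductively on $[0,T^*)$.

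The main obstacle will be the closure of the top-order estimate at $k=4$: commutators of $\partial_x^4$ with $\lambda_n(\rho_n)$ produce terms such as $\partial_x^4 \rho_n \cdot \partial_x u_n$ and $\partial_x \rho_n \cdot \partial_x^4 u_n$, each of total order five, that must be arranged so that any genuine $\partial_x^5 u_n$ factor is absorbed into the parabolic dissipation via Cauchy-Schwarz with weight $\sqrt{\lambda_n(\rho_n)}$, while top-order density contributions are reabsorbed after integration by parts using the lower-order $H^3$ bounds already at hand. Since this lemma is stated for fixed $n$, all constants may depend on $n$, $\gamma_n$, $T^*$, $\underline{\rho_n}$, $\overline{\rho_n}$ and the initial data, which is what allows this otherwise delicate closure to go through without needing uniform-in-$n$ control.
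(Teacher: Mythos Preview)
The paper does not prove this lemma; immediately after stating it, it simply records that the torus case is handled in \cite{Constantin_2020, HCLMehmood} and that the real-line case is Theorem~3.3 of \cite{Burtea_2020}. Your bootstrap outline is exactly the kind of argument those references carry out, and the later steps --- uniform parabolicity once $\rho_n$ is pinched between two positive constants, successive $H^k$ energy estimates with commutator management, absorption of any top-order $\partial_x^{k+1} u_n$ into the viscous dissipation, Gronwall, and extension by the local theory --- are all sound.

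The one genuine gap is in your first step. You assert that the transport bound on $W_n = \partial_x w_n/\rho_n$, together with $u_n = w_n - \partial_x p_n(\rho_n)$ and the hypothesis $\rho_n \ge \underline{\rho_n}$, yields an $L^\infty$ bound on $\partial_x u_n$. This does not follow: writing $\partial_x u_n = \partial_x w_n - \partial_x^2 p_n(\rho_n)$, the bound on $W_n$ gives only $|\partial_x w_n| \le \|W_n^0\|_{L^\infty}\,\rho_n$, which is circular without the upper bound on $\rho_n$ you are trying to establish, while $\partial_x^2 p_n(\rho_n)$ contains $\partial_x^2 \rho_n$, uncontrolled at this stage. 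A direct maximum principle on $\rho_n$ via the parabolic form $\partial_t \rho_n + \partial_x(\rho_n w_n) = \partial_x^2 \pi_n(\rho_n)$ fares no better: at a maximum one obtains only the Riccati inequality $(\rho_n^M)' \le C(\rho_n^M)^2$, which does not exclude blow-up of $\rho_n^M$ precisely at $t=T^*$. A route that does close is to first bound $\|u_n\|_{L^\infty}$ by the parabolic maximum principle applied to \eqref{a2} in the form $\partial_t u_n + u_n\partial_x u_n = \rho_n^{-1}\partial_x(\lambda_n\partial_x u_n)$ (this needs only $\rho_n>0$), so that $\partial_x p_n(\rho_n) = w_n - u_n \in L^\infty$ and hence $\partial_x \rho_n \in L^\infty$ via $p_n'(\rho_n) \ge \gamma_n \underline{\rho_n}^{\gamma_n-1}$; then combine this with the $L^2$ control on $\rho_n - \overline{\rho}$ coming from the relative entropy $H_n(\rho_n\,|\,\overline{\rho})$ and Gagliardo--Nirenberg to conclude $\|\rho_n - \overline{\rho}\|_{L^\infty}$ bounded on $[0,T^*)$. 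With this patch, the remainder of your plan goes through.
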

This result has already been shown in the case of $\Omega = \mathbb{T}$ in \cite{Constantin_2020, HCLMehmood}, while the case $\Omega = \mathbb{R}$ can be found in Theorem 3.3. of \cite{Burtea_2020}. 
%%%%%%%%%%%%%%
\subsection{Proving the existence of a global solution}{\label{sec:global-ex}}
In this subsection we complete the proof of Theorem~\ref{globalexistence}. The proof follows a very similar 'maximum-principle' strategy which was seen in the proof of Theorem 1.2. in \cite{HCLMehmood} or \cite{Constantin_2020}, where the domain was $\Omega = \mathbb{T}$, the one-dimensional torus. Nonetheless, many details of our argument are the same so we will only outline the main steps and refer the reader to \cite{HCLMehmood} for the technical details. The use of a maximum principle on the real line requires some justification, however, and therefore we will elaborate upon this point further.
\begin{proof}
   The local existence result (Theorem \ref{localexistence}) gave us the existence of a solution $(\rho_{n}, u_{n})$ on $\mathbb{R} \times [0,T_{0}$ where $T_{0} > 0$ and $\rho_{n} \ge c > 0$ on this interval. We denote the maximal time of existence by $T^{*}$, and assume for the sake of a contradiction that $T^{*} < +\infty$. Then the blow-up lemma implies that we must have \begin{equation} \label{densityposassumption}
        \rho_{n}(t, \cdot) > 0 \text{ for each } t \in [0,T^{*}),
    \end{equation} and \begin{equation} \label{buassumption}
          \inf_{t \in [0,T^{*})} \inf_{x \in \mathbb{R}} \rho_{n}(t,x) > 0.
    \end{equation} Therefore, proving that the density is bounded from below by a positive constant is sufficient to conclude that the solution we have obtained is in fact global-in-time. We first note that the evolution equation for the potential $W_{n} := \rho_{n}^{-1}\partial_{x}w_{n}$ is given by \begin{equation}
        \partial_{t}W_{n} + u_{n}\partial_{x}W_{n} = 0. \label{Wneqn}
    \end{equation} Since $W_{n}$ satisfies a transport equation it follows that \begin{equation} \label{Wn_cons}
         \esssup_{x \in \mathbb{R}} \frac{\partial_{x}w_{n}}{\rho_{n}}(t,x) = \esssup_{x \in \mathbb{R}} \frac{\partial_{x}w_{n}^{0}}{\rho_{n}^{0}}(x) =: M_{n}^{0}.
    \end{equation}
    Next, we carry out a maximum-principle argument with $1/\rho_{n}$. The evolution equation reads
    \begin{equation} \label{1rhoeqn}
    \partial_{t} \left( \frac{1}{\rho_{n}} \right) + u_{n} \partial_{x} \left( \frac{1}{\rho_{n}} \right) = \frac{1}{\rho_{n}} \partial_{x} u_{n}.
\end{equation} We define $
    P_{n}(t,x) := 1/\rho_{n}(t,x)$ and the corresponding maximum function \begin{equation} \label{PnMAX}
    P_{n}^{M}(t) := \max_{x \in \mathbb{R}}  \frac{1}{\rho_{n}(t,x)} \text{ on } [0,T^{*})\end{equation} with the aim of showing that \begin{equation}
       P_{n}^{M}(t) = \max_{x \in \mathbb{R}} \frac{1}{\rho_{n}^{0}}(x), ~~ \forall ~t \in [0,T].
    \end{equation} Thanks to assumption \eqref{densityposassumption} we have that for any $t \in [0,T^{*})$, $1/\rho_{n}$ has the same regularity as $\rho_{n}$. 
    Therefore, $(1/\rho_{n} - 1/\overline{\rho})(\cdot, t) \in H^{1}(\mathbb{R})$ at least and thus $(1/\rho_{n}) (t,x) \to 1/\overline{\rho}$ as $|x| \to \infty$. 
    It follows that $1/\rho_{n}$ attains its maximum at some point $x_{t} \in \mathbb{R}$ for any $t \in [0,T^{*})$ and so \eqref{PnMAX} is well-defined. 
    As in \cite{HCLMehmood} we can also show that $(P_{n}^{M})'(t) = \partial_{t}P_{n}(x_{t},t)$. Substituting $u_{n} = w_{n} - \partial_{x}p_{n}(\rho_n)$ into \eqref{1rhoeqn} and evaluating the equation at the minimum points of $1/\rho_{n}$, we eventually get
    \begin{equation}
    \partial_{t}P_{n}^{M} = - u_{n} \partial_{x} P_{n}^{M} + \rho_{n}^{-1} \partial_{x}w_{n} - P_{n}^{M} p_{n}''(\rho_{n})|\rho_{n}^{2} \partial_{x}P_{n}^{M}|^{2} - 2p_{n}'(\rho_{n})\rho_{n}^{2} \left( \partial_{x} P_{n}^{M}  \right)^{2} + \rho_{n} p_{n}'(\rho_{n}) \partial_{x}^{2}P_{n}^{M}.
    \end{equation} 
    Since $\rho_{n}(\cdot, t) > 0$ on $[0,T^{*})$, $\partial_{x}P_{n}^{M}(t) = 0 $ and $ \partial_{x}^{2}P_{n}^{M} \le 0$, we have that
    \begin{equation} \label{1rhoODE}
        (P_{n}^{M})'(t) \le \frac{\partial_{x}w_{n}(x_{t},t)}{\rho_{n}(x_{t},t)} ~~ \forall ~t \in [0,T^{*}).
    \end{equation} Using \eqref{Wn_cons} we infer that
    \begin{equation} \label{lbdensity}
       \rho_{n}(t,x) \ge \frac{1}{M_{n}^{0}t + (\inf_{\mathbb{R}}\rho_{n}^{0})^{-1}}, \text{ for a.e. } t \in [0,T^{*}).
    \end{equation} 
    The lower bound \eqref{lbdensity} contradicts \eqref{buassumption} and therefore we conclude that $T^{*} = + \infty$.
\end{proof} 
The above proof grants us a lower bound on the density for our global solution.
\begin{corollary}
    The density $\rho_{n}$ provide by Theorem~\ref{localexistence} satisfies
    \begin{equation}
          \rho_{n}(t,x) \ge \frac{1}{M_{n}^{0}\, t + (r^0_n)^{-1}} \quad \text{ on } ~  [0,\infty)\times \mathbb{R}.
    \end{equation}
\end{corollary}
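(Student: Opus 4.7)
The plan is to observe that the claim has essentially already been proved inside the argument for Theorem~\ref{globalexistence}: inequality \eqref{lbdensity} is exactly the bound we want, with $\inf_\R \rho_n^0$ in place of $r_n^0$, valid on the maximal interval of existence $[0,T^*)$. Now that Theorem~\ref{globalexistence} has ruled out finite-time blow-up, i.e.\ $T^*=+\infty$, the only remaining step is to repackage the bound and weaken the constant using $\inf_\R \rho_n^0 \ge r_n^0$, which is precisely what assumption~\eqref{A1rho} provides.

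Concretely, I would first recall the transport equation \eqref{Wneqn} satisfied by $W_n := \rho_n^{-1} \partial_x w_n$. Since this quantity is conserved along characteristics, \eqref{Wn_cons} gives the global-in-time pointwise upper bound $W_n(t,x) \le M_n^0$. Next I would invoke the evolution equation \eqref{1rhoeqn} for $P_n := 1/\rho_n$, substitute $u_n = w_n - \partial_x p_n(\rho_n)$, and evaluate at a point $x_t$ where $P_n(t,\cdot)$ attains its global maximum — such a point exists because the non-zero background state $\overline{\rho}$ and the $H^3$-regularity of $\rho_n - \overline{\rho}$ force $P_n(t,\cdot) \to 1/\overline{\rho}$ at infinity. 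Using $\partial_x P_n^M(t) = 0$ and $\partial_x^2 P_n(x_t,t) \le 0$, together with $p_n' \ge 0$ and $p_n'' \ge 0$, the dissipative terms have the favourable sign and one obtains the differential inequality \eqref{1rhoODE},
\[
(P_n^M)'(t) \;\le\; \frac{\partial_x w_n(x_t,t)}{\rho_n(x_t,t)} \;\le\; M_n^0, \qquad t\in[0,\infty).
\]

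Integrating on $[0,t]$ gives $P_n^M(t) \le P_n^M(0) + M_n^0 t$, and combining with $P_n^M(0) = 1/\inf_\R \rho_n^0 \le (r_n^0)^{-1}$ yields $1/\rho_n(t,x) \le M_n^0 t + (r_n^0)^{-1}$, which upon inversion is precisely the claimed bound. There is no real obstacle in this corollary; it amounts to bookkeeping on top of the main proof. The only technical point worth noting — the identification $(P_n^M)'(t) = \partial_t P_n(x_t,t)$ at a time of attainment of the maximum — is already justified inside the proof of Theorem~\ref{globalexistence} via the $H^3$-regularity of the regular solution and the standard Danchin-type argument recalled in \cite{HCLMehmood}.
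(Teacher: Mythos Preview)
Your proposal is correct and matches the paper's approach exactly: the paper simply states that ``the above proof grants us a lower bound on the density for our global solution,'' i.e.\ the corollary is immediate from \eqref{lbdensity} established in the proof of Theorem~\ref{globalexistence}, together with $T^*=+\infty$. You are slightly more explicit than the paper in noting that \eqref{A1rho} is needed to pass from $(\inf_\R\rho_n^0)^{-1}$ to $(r_n^0)^{-1}$, but this is the intended reading.
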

%%%%%%%%%%%%%%%%%%
\section{Uniform estimates} \label{sectionUNIF}
In this section we obtain a series of uniform in $n$ estimates which we will need in the limit passage.
\subsection{Estimates for the momentum}
We now look for uniform bounds on the quantities $\rho_{n}u_{n}$ and $\rho_{n}w_{n}$. 
We first obtain $L^\infty L^1$ bounds deriving naturally from the conservative formulation of the equations. 
%We use a regularisation argument which closely follows the work of Boudin \cite{bouchut1998one} -> this is very classical and not particularly due to Boudin.
\begin{lemma}
    The momentum $\rho_{n}u_{n}$ satisfies\begin{equation} \label{rhouL1}
        \|\rho_{n}u_{n}\|_{L^{\infty}_{t}L^{1}_{x}} \le \|\rho_{n}^{0}u_{n}^{0}\|_{L^{1}_{x}}.
    \end{equation}
\end{lemma}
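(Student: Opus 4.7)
The plan is to exploit the non-conservative form of the momentum equation, which for a regular solution $(\rho_n, u_n)$ produced by Theorem~\ref{globalexistence} reads
\begin{equation*}
    \rho_{n}(\partial_{t} u_{n} + u_{n} \partial_{x} u_{n}) = \partial_{x}(\lambda_{n}(\rho_{n}) \partial_{x} u_{n}),
\end{equation*}
and combine it with the continuity equation \eqref{a1} in a renormalisation argument against a convex approximation of the absolute value.

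Concretely, I would pick a family of convex $C^{2}$ approximations $\eta_{\epsilon}$ of $|\cdot|$ with $\eta_{\epsilon}(0) = 0$, $0 \le \eta_{\epsilon}(s) \le |s|$, $\eta_{\epsilon}''(s) \ge 0$, and $\eta_{\epsilon}(s) \nearrow |s|$ monotonically as $\epsilon \to 0^{+}$ (for instance $\eta_{\epsilon}(s) = \sqrt{s^{2} + \epsilon^{2}} - \epsilon$). Multiplying \eqref{a1} by $\eta_{\epsilon}(u_{n})$ and the velocity equation above by $\eta_{\epsilon}'(u_{n})$, then summing, yields the renormalised identity
\begin{equation*}
    \partial_{t}(\rho_{n} \eta_{\epsilon}(u_{n})) + \partial_{x}(\rho_{n} u_{n} \eta_{\epsilon}(u_{n})) = \eta_{\epsilon}'(u_{n}) \partial_{x}(\lambda_{n}(\rho_{n}) \partial_{x} u_{n}).
\end{equation*}
Integrating in space and applying integration by parts on the right-hand side produces
\begin{equation*}
    \frac{d}{dt} \int_{\mathbb{R}} \rho_{n} \eta_{\epsilon}(u_{n})\, dx = - \int_{\mathbb{R}} \lambda_{n}(\rho_{n}) \eta_{\epsilon}''(u_{n}) (\partial_{x} u_{n})^{2}\, dx \le 0,
\end{equation*}
where the sign comes from the convexity of $\eta_{\epsilon}$ and the positivity of $\lambda_{n}$. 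Integrating in time and sending $\epsilon \to 0^{+}$ by monotone convergence then delivers the claimed bound $\int_{\mathbb{R}} \rho_{n}(t,\cdot) |u_{n}(t,\cdot)| \, dx \le \int_{\mathbb{R}} \rho_{n}^{0} |u_{n}^{0}|\, dx$.

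The main technical point is justifying the integrations by parts on $\mathbb{R}$ and the finiteness of the functional $\int \rho_{n} \eta_{\epsilon}(u_{n})\, dx$. For the former, the $H^{3}$-regularity furnished by Theorem~\ref{globalexistence}, namely $\rho_{n} - \overline{\rho}, u_{n} \in L^{\infty}(0,T; H^{3}(\mathbb{R}))$, ensures $\rho_{n} \to \overline{\rho}$ and $u_{n}, \partial_{x} u_{n} \to 0$ as $|x| \to \infty$; after first integrating over $[-R, R]$, the boundary terms $[\rho_{n} u_{n} \eta_{\epsilon}(u_{n})]_{-R}^{R}$ and $[\eta_{\epsilon}'(u_{n}) \lambda_{n}(\rho_{n}) \partial_{x} u_{n}]_{-R}^{R}$ both tend to zero as $R \to \infty$. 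The finiteness of the functional at $t = 0$ follows from the hypothesis $\rho_{n}^{0} u_{n}^{0} \in L^{1}(\mathbb{R})$ combined with $\eta_{\epsilon} \le |\cdot|$; the monotonicity established above then propagates this finiteness to all later times.
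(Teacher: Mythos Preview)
Your proof is correct and follows essentially the same route as the paper's: renormalise the momentum equation against a smooth convex approximation of $|\cdot|$, use convexity to obtain the sign on the dissipation term, integrate, and pass to the limit in the approximation parameter. The only cosmetic difference is that the paper takes $\varphi_\alpha(s)=\sqrt{s^2+\alpha}$ and invokes Fatou's lemma, whereas your shifted choice $\eta_\epsilon(s)=\sqrt{s^2+\epsilon^2}-\epsilon$ (vanishing at the origin and dominated by $|\cdot|$) lets you use monotone convergence and cleanly handles the integrability of $\rho_n\eta_\epsilon(u_n)$ over $\mathbb{R}$ despite the nonzero background state $\bar\rho$.
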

\begin{proof}
    Suppose $S \in C^{2}(\mathbb{R})$ is an arbitrary convex function. Multiplying \eqref{a2} by $S'(u_{n})$, one can show that
    \begin{equation}
        \partial_{t}(\rho_{n}S(u_{n})) + \partial_{x}(\rho_{n}S(u_{n})) - \partial_{x}(S'(u_{n})\lambda_{n}(\rho_{n})\partial_{x}u_{n}) = - S''(u_{n})\lambda_{n}(\rho_{n})(\partial_{x}u_{n})^{2} \le 0. 
    \end{equation} Integrating in space and time leads to
    \begin{equation} \label{rhouapprox}
        \int_{\mathbb{R}}\rho_{n}S(u_{n})(t,x)~dx \le \int_{\mathbb{R}}\rho_{n}^{0}S(u_{n}^{0})(x)~dx.
    \end{equation} We choose $S = \varphi_{\alpha},$ where $ \varphi_{\alpha}(x) = \sqrt{x^{2} + \alpha}, ~\alpha >0$. Note that for each $\alpha > 0$, $\varphi_{\alpha} > 0$ is a convex function belonging to $C^{2}(\mathbb{R})$. In particular, $\varphi_{\alpha}(\cdot) \to |\cdot|$ uniformly as $\alpha \to 0$. Using Fatou's lemma in \eqref{rhouapprox} leads to \eqref{rhouL1}.
\end{proof}
\begin{lemma}
    The desired momentum $\rho_{n}w_{n}$ satisfies
    \begin{equation} \label{rhowL1}
        \|\rho_{n}w_{n}\|_{L^{\infty}_{t}L^{1}_{x}} \le \|\rho_{n}^{0}w_{n}^{0}\|_{L^{1}_{x}}.
    \end{equation}\begin{proof}
        We use the same argument as in the previous lemma. Multiplying \eqref{AR-2} by an arbitrary convex $S \in C^{2}(\mathbb{R})$ and rearranging, we get
        \begin{equation}
            \partial_{t}(\rho_{n}S(w_{n})) + \partial_{x}(\rho_{n}u_{n}S(w_{n})) = 0.
        \end{equation} Integrating in space and time, choosing $S = \varphi_{\alpha}$ and using Fatou's lemma once more gives us \eqref{rhowL1}.
    \end{proof}
\end{lemma}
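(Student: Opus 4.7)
The plan is to mirror the strategy used in the preceding lemma for $\rho_n u_n$, but now exploiting the fact that equation \eqref{AR-2} is a \emph{pure} conservation law (no viscous dissipation term), so the argument is actually cleaner. The key observation is that combining \eqref{AR-1} with \eqref{AR-2} yields that the desired velocity $w_n$ is transported along the flow of $u_n$, i.e.\ $\rho_n(\partial_t w_n + u_n \partial_x w_n) = 0$. This suggests testing against $S'(w_n)$ for $S$ convex and recovering an entropy-type identity.

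Concretely, for an arbitrary convex $S \in C^2(\mathbb{R})$ I would multiply \eqref{AR-2} by $S'(w_n)$, apply the chain rule, and then use the continuity equation \eqref{AR-1} to rearrange the result into the conservation identity
\begin{equation*}
\partial_t(\rho_n S(w_n)) + \partial_x(\rho_n u_n S(w_n)) = 0.
\end{equation*}
Integrating over $\mathbb{R}$ (the boundary terms vanish because $\rho_n - \bar\rho$, $u_n$ and $w_n$ all have $H^3$ decay at infinity by Theorem \ref{globalexistence}) then yields
\begin{equation*}
\int_{\mathbb{R}} \rho_n S(w_n)(t,x)\,dx = \int_{\mathbb{R}} \rho_n^0 S(w_n^0)(x)\,dx.
\end{equation*}
Finally, choosing $S = \varphi_\alpha$ with $\varphi_\alpha(y) = \sqrt{y^2 + \alpha}$ (which is convex, $C^2$, strictly positive, and converges uniformly to $|\cdot|$ as $\alpha \to 0$), and sending $\alpha \to 0^+$ via Fatou's lemma gives the claimed bound
\begin{equation*}
\|\rho_n w_n\|_{L^\infty_t L^1_x} \le \|\rho_n^0 w_n^0\|_{L^1_x}.
\end{equation*}

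The only delicate point is the justification of the integration by parts on $\mathbb{R}$, but this is handled by the background-state assumption $\rho_n \to \bar\rho$ at infinity together with the Sobolev decay of $u_n$ and of $w_n = u_n + \partial_x p_n(\rho_n)$. Since the momentum equation \eqref{AR-2} has no right-hand side, there is no analogue of the $S''(u_n)\lambda_n(\rho_n)(\partial_x u_n)^2$ dissipation term to discard, so I do not anticipate any real obstacle beyond the routine approximation argument.
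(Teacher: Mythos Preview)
Your proposal is correct and follows essentially the same route as the paper: multiply \eqref{AR-2} by $S'(w_n)$ to obtain the conservation identity $\partial_t(\rho_n S(w_n)) + \partial_x(\rho_n u_n S(w_n)) = 0$, integrate, take $S = \varphi_\alpha$, and pass to the limit via Fatou. Your added remarks on the decay at infinity and the absence of a dissipation term are accurate and make the argument slightly more explicit than the paper's version.
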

This directly leads to a bound on $\partial_{x}\pi_{n}$.
\begin{corollary} \label{dxpilemma}
Under the assumption \eqref{A2mom}, the potential $\pi_{n}$ satisfies
\begin{equation} \label{dxpiL1}
        \|\partial_{x}\pi_{n}\|_{L^{\infty}_{t}L^{1}_{x}} \le C.
    \end{equation}
\end{corollary}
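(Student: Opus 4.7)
The plan is to reduce the bound on $\partial_x \pi_n$ to a combination of the two momentum bounds \eqref{rhouL1} and \eqref{rhowL1} already established, using the algebraic identity coming from the definition of $\pi_n$ and the Aw-Rascle relation $w_n = u_n + \partial_x p_n(\rho_n)$.

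First, I would unpack the definition \eqref{df:pi_n}. Since $\pi_n'(\rho) = \rho\, p_n'(\rho)$, the chain rule gives
\begin{equation*}
\partial_x \pi_n(\rho_n) = \pi_n'(\rho_n)\, \partial_x \rho_n = \rho_n\, p_n'(\rho_n)\, \partial_x \rho_n = \rho_n\, \partial_x p_n(\rho_n).
\end{equation*}
Using $\partial_x p_n(\rho_n) = w_n - u_n$ from \eqref{AR-3}, this simplifies to the pointwise identity
\begin{equation*}
\partial_x \pi_n = \rho_n w_n - \rho_n u_n.
\end{equation*}
This identity is valid classically since $(\rho_n, u_n)$ is the regular solution produced by Theorem~\ref{globalexistence}.

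Then I would apply the triangle inequality in $L^\infty_t L^1_x$ and invoke the two preceding lemmas:
\begin{equation*}
\|\partial_x \pi_n\|_{L^\infty_t L^1_x} \le \|\rho_n w_n\|_{L^\infty_t L^1_x} + \|\rho_n u_n\|_{L^\infty_t L^1_x} \le \|\rho_n^0 w_n^0\|_{L^1_x} + \|\rho_n^0 u_n^0\|_{L^1_x}.
\end{equation*}
The right-hand side is bounded by a constant independent of $n$ thanks to assumption \eqref{A2mom}, yielding \eqref{dxpiL1}.

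There is no real obstacle here: the result is a direct corollary of the $L^\infty_t L^1_x$ control on the two momenta and the algebraic structure of the generalised Aw-Rascle system. The only minor point to flag is that the manipulation $\partial_x p_n(\rho_n) = w_n - u_n$ relies on the regularity of the approximate solution, which is guaranteed by Definition~\ref{defregsoln}, so no distributional subtleties arise at this stage.
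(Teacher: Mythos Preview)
Your proof is correct and follows exactly the same approach as the paper: the paper's proof is the one-line observation that ``this is an immediate consequence of the relationship $\rho_{n}w_{n} = \rho_{n}u_{n} + \partial_{x}\pi_{n}$,'' and you have simply spelled out the derivation of that identity and the ensuing triangle-inequality argument in more detail.
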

\begin{proof}
    This is an immediate consequence of the relationship $\rho_{n}w_{n} = \rho_{n}u_{n} + \partial_{x}\pi_{n}$.
\end{proof}
\subsection{Estimates for the velocity \texorpdfstring{$u_n$}{Lg}}
We can use a maximum-principle argument to show that $u_{n}$ is uniformly bounded.
\begin{lemma}
    The actual velocity $u_{n}$ satisfies
    \begin{equation} \label{unLinf}
    \|u_{n}\|_{L^{\infty}_{t,x}} \le C.
    \end{equation}
\end{lemma}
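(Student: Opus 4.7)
The plan is to mimic the maximum-principle argument used in Section~\ref{sec:global-ex} for $1/\rho_n$, but now applied directly to $u_n$ using the non-conservative form of the momentum equation. First, I would combine \eqref{a1} and \eqref{a2} (using $\rho_n > 0$ from the lower bound just established) to rewrite the velocity equation as
\begin{equation*}
\partial_t u_n + u_n \partial_x u_n - \frac{\lambda_n(\rho_n)}{\rho_n}\partial_{xx} u_n - \frac{\lambda_n'(\rho_n)}{\rho_n}\partial_x\rho_n\,\partial_x u_n = 0.
\end{equation*}
Since Definition~\ref{defregsoln} gives $u_n(t,\cdot)\in H^3(\mathbb{R}) \hookrightarrow C_0(\mathbb{R})$, the function $u_n(t,\cdot)$ is continuous and decays to $0$ at infinity, so its supremum is either $\le 0$ or attained at some $x_t \in \mathbb{R}$.

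Set $u_n^M(t) := \sup_{x\in\mathbb{R}} u_n(t,x)$. If $u_n^M(t) > 0$, then at the maximum point $x_t$ one has $\partial_x u_n(t,x_t)=0$ and $\partial_{xx}u_n(t,x_t) \le 0$. Evaluating the non-conservative equation at $(t,x_t)$ annihilates the transport and drift terms, leaving
\begin{equation*}
\partial_t u_n(t,x_t) \;=\; \frac{\lambda_n(\rho_n(t,x_t))}{\rho_n(t,x_t)}\,\partial_{xx}u_n(t,x_t) \;\le\; 0.
\end{equation*}
By the same argument used for $P_n^M$ in the proof of Theorem~\ref{globalexistence}, $(u_n^M)'(t) = \partial_t u_n(t,x_t) \le 0$ wherever $u_n^M(t)>0$, hence $u_n^M(t) \le \max(u_n^M(0), 0) \le \|u_n^0\|_{L^\infty_x}$. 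An identical argument applied to $u_n^m(t) := \inf_x u_n(t,x)$ (now using $\partial_{xx}u_n(t,y_t) \ge 0$ at a negative minimum) yields $-u_n^m(t) \le \|u_n^0\|_{L^\infty_x}$.

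Combining these two one-sided bounds and invoking assumption \eqref{A4uLinf} gives $\|u_n\|_{L^\infty_{t,x}} \le \|u_n^0\|_{L^\infty_x} \le C$, which is precisely \eqref{unLinf}. The only step requiring care is the justification that the supremum is actually attained (so that the pointwise evaluation is legitimate) and that $t\mapsto u_n^M(t)$ is differentiable with derivative $\partial_t u_n(t,x_t)$ for a.e.\ $t$; the first follows from the $C_0(\mathbb{R})$ decay coming from $H^3$ regularity, and the second is handled exactly as in Section~\ref{sec:global-ex}, for instance by approximating the max with smooth envelopes or by a standard Rademacher-type argument on the Lipschitz map $t \mapsto u_n^M(t)$. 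No new technology beyond what was already used for the lower bound on $\rho_n$ is needed.
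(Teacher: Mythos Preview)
Your proposal is correct and follows essentially the same maximum-principle argument as the paper: rewrite the momentum equation in non-conservative form, use the $H^3$ regularity to ensure $u_n(t,\cdot)$ decays at infinity so the extrema are attained, evaluate at the maximum (resp.\ minimum) to get $(u_n^M)'(t)\le 0$ (resp.\ $(u_n^m)'(t)\ge 0$), and then invoke assumption~\eqref{A4uLinf}. The paper's proof is virtually identical, with the only cosmetic difference being that it keeps the diffusion term as $\rho_n^{-1}\partial_x(\lambda_n(\rho_n)\partial_x u_n)$ rather than expanding the product rule as you do.
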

\begin{proof} The momentum equation \eqref{a2} can be expressed as
\begin{equation} \label{momu}
    \partial_{t}u_{n} = - u_{n}\partial_{x}u_{n} + \rho_{n}^{-1}\partial_{x}(\lambda_{n}(\rho_{n})\partial_{x}u_{n}).
\end{equation}
    Fix a time $t \in [0,T]$. Notice that since $u_{n} \in L^{2}(\mathbb{R})$ is uniformly continuous, we must have that $u_{n}(t,x) \to 0$ as $|x| \to \infty$. Our first goal is to show that \begin{equation} \label{unMAXabove}
    u_{n}(t,x) \le \max\left(0, ~\esssup_{x \in \mathbb{R}}(u_{n}^{0})\right)  ~~ \text{ for all } (t,x) \in  [0,T]\times\mathbb{R} .\end{equation} First suppose that $\esssup_{x \in \mathbb{R}}u_{n}(x, t) \le 0$. Then \eqref{unabove} is satisfied trivially. Therefore we may assume without loss of generality that $\esssup_{x \in \mathbb{R}}u_{n}(x, t) > 0$.  This implies that $u_{n}(\cdot, t)$ obtains its maximum, i.e. there exists $x_{t} \in \mathbb{R}$ such that $\esssup_{x \in \mathbb{R}} u_{n}(t,x) = u_{n}(x_{t},t)$. This means that the maximum function \begin{equation}
        u_{n}^{M}(t) := \max_{x \in \mathbb{R}} u_{n}(t,x)
    \end{equation} is well-defined, almost everywhere differentiable and satisfies $(u_{n}^{M})'(t) = \partial_{t}u_{n}(x_{t},t)$. Evaluating \eqref{momu} at the points $(x_{t},t)$ then gives us
\begin{equation}
    \partial_{t}u_{n}(x_{t},t) = -u_{n}\partial_{x}u_{n} + \rho_{n}^{-1} \partial_{x}\lambda_{n}\partial_{x}u_{n} + \rho_{n}^{-1}\lambda_{n}\partial_{x}^{2}u_{n} \le 0,
\end{equation} where we have used the facts $\partial_{x}u_{n}(x_{t},t) = 0$ and $ \partial_{x}^{2}u_{n}(x_{t},t) \le 0$. Thus we get \begin{equation}
    u_{n}(t,x) \le \max_{x \in \mathbb{R}} u_{n}^{0}(x) = \esssup_{x \in \mathbb{R}} u_{n}^{0}(x) ~~ \text{ for all } (t,x) \in [0,T] \times \mathbb{R}, \label{unabove}
\end{equation} under the assumption that $\sup_{x \in \mathbb{R}}u_{n}(x, t) > 0$. In particular we obtain \eqref{unMAXabove}. Repeating this argument with the minimum points instead of the maximum points will give us \begin{equation}
         u_{n}(t,x) \ge \min\left(0, ~\esssinf_{x \in \mathbb{R}}(u_{n}^{0})\right) ~~ \text{ for all } (t,x) \in [0,T] \times \mathbb{R}. \label{unbelow}
    \end{equation} Taking into account \eqref{unMAXabove}, \eqref{unbelow} and the assumption \eqref{A4uLinf}, we obtain \eqref{unLinf}.
\end{proof} 
Our next goal is to derive the one-sided Lipschitz condition for $u_{n}$. We will first prove an intermediate result for which we introduce the singular diffusion $V_{n}$ as 
    \begin{equation}
        V_{n} := \lambda_{n}(\rho_{n})\partial_{x}u_{n}.
    \end{equation} This corresponds to the active potential used by Constantin et al in \cite{Constantin_2020}. The following lemma is a type of maximum-principle for $V_{n}$.
\begin{lemma} We have
\begin{equation} \label{VnONESIDE}
    (\lambda_{n}(\rho_{n})\partial_{x}u_{n})(t,x) \le \esssup_{x \in \mathbb{R}}(\lambda_{n}(\rho_{n}^{0})\partial_{x}u_{n}^{0}).
\end{equation}
\end{lemma}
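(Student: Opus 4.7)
The statement is a maximum-principle for $V_n$ analogous to the one carried out for $u_n$ and for $1/\rho_n$ earlier in the paper, so my plan is to follow the same envelope-at-maximum strategy after first deriving a transport-diffusion equation for $V_n$ itself.

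First, I would compute the evolution equation of $V_n$. Differentiating $V_n=\lambda_n(\rho_n)\partial_xu_n$ in time, and using the continuity equation $\partial_t\rho_n=-u_n\partial_x\rho_n-\rho_n\partial_x u_n$ together with the momentum equation in the non-conservative form $\partial_tu_n+u_n\partial_xu_n=\rho_n^{-1}\partial_xV_n$ (which is equivalent to \eqref{a2} since $\rho_n$ is bounded away from zero for a regular solution), a direct computation gives
\begin{equation}
\partial_tV_n+u_n\partial_xV_n=-\bigl[\rho_n\lambda_n'(\rho_n)+\lambda_n(\rho_n)\bigr](\partial_xu_n)^2+\lambda_n(\rho_n)\,\partial_x\bigl(\rho_n^{-1}\partial_xV_n\bigr).
\end{equation}
Here the identity $-u_n\lambda_n'(\rho_n)\partial_x\rho_n\partial_xu_n-u_n\lambda_n(\rho_n)\partial_x^2u_n=-u_n\partial_xV_n$ was used to collect the transport part. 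The crucial point is that $\rho_n\lambda_n'(\rho_n)+\lambda_n(\rho_n)\ge 0$, since for $p_n(\rho)=\rho^{\gamma_n}$ one has $\lambda_n(\rho)=\gamma_n\rho^{\gamma_n+1}$, giving $\rho\lambda_n'(\rho)+\lambda_n(\rho)=\gamma_n(\gamma_n+2)\rho^{\gamma_n+1}\ge 0$.

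Second, I would run the envelope argument. The regularity $\rho_n-\overline\rho,u_n\in H^3(\mathbb R)$ ensures $\rho_n\to\overline\rho$ and $\partial_xu_n\to 0$ at infinity, so $V_n(t,\cdot)$ is continuous with $V_n(t,x)\to 0$ as $|x|\to\infty$. Define
\begin{equation}
V_n^M(t):=\sup_{x\in\mathbb R}V_n(t,x).
\end{equation}
Since $V_n(t,\cdot)\to 0$ at infinity, $V_n^M(t)\ge 0$, and whenever $V_n^M(t)>0$ the supremum is attained at some $x_t\in\mathbb R$. At such a point $\partial_xV_n(t,x_t)=0$ and $\partial_x^2V_n(t,x_t)\le 0$, so the dissipative term satisfies
\begin{equation}
\lambda_n(\rho_n)\partial_x\bigl(\rho_n^{-1}\partial_xV_n\bigr)\Big|_{x=x_t}=\lambda_n(\rho_n)\rho_n^{-1}\partial_x^2V_n(t,x_t)\le 0,
\end{equation}
while $u_n\partial_xV_n(t,x_t)=0$ and the inertial term $-[\rho_n\lambda_n'(\rho_n)+\lambda_n(\rho_n)](\partial_xu_n)^2$ is non-positive. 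Hence $\partial_tV_n(t,x_t)\le 0$. By the standard Danskin-type argument (same as invoked in the proofs of \eqref{unabove} and \eqref{PnMAX}), $V_n^M$ is a.e.\ differentiable with $(V_n^M)'(t)=\partial_tV_n(t,x_t)$ wherever $V_n^M(t)>0$.

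Finally, to conclude I integrate in time. Suppose by contradiction that $V_n^M(t_1)>V_n^M(0)$ for some $t_1>0$; since $V_n^M$ is continuous and $V_n^M\ge 0$, there is a last time $t_0\in[0,t_1)$ with $V_n^M(t_0)=V_n^M(0)$ and $V_n^M>V_n^M(0)$ on $(t_0,t_1]$. On this interval $V_n^M(t)\ge V_n^M(0)\ge 0$, and in fact $V_n^M(t)>0$ on $(t_0,t_1]$ (otherwise $V_n^M(t)=0\le V_n^M(0)$, contradicting the choice of $t_0$; the edge case $V_n^M(0)=0$ is dealt with directly by the non-negativity observation). Thus $(V_n^M)'(t)\le 0$ a.e.\ on $(t_0,t_1)$, giving $V_n^M(t_1)\le V_n^M(t_0)=V_n^M(0)$, a contradiction. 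Since $V_n^0$ is continuous, $V_n^M(0)=\sup_xV_n^0=\esssup_xV_n^0$, which yields \eqref{VnONESIDE}.

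The main potential obstacle is the envelope step: ensuring that the maximum is actually attained (not just approached along a sequence going to infinity) and that the second derivative test is valid. This is handled by the decay $V_n\to 0$ at infinity inherited from the $H^3$ regularity together with $V_n^M\ge 0$, which together reduce the analysis to genuine interior maxima where the classical pointwise argument applies.
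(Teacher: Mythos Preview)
Your proof is correct and follows essentially the same approach as the paper: your evolution equation for $V_n$ is algebraically identical to the paper's (your divergence form $\lambda_n\partial_x(\rho_n^{-1}\partial_xV_n)$ expands precisely to the paper's $-\frac{\lambda_n}{\rho_n^2}\partial_x\rho_n\,\partial_xV_n+\frac{\lambda_n}{\rho_n}\partial_x^2V_n$, and the right-hand side $-\frac{\lambda_n'\rho_n+\lambda_n}{\lambda_n^2}V_n^2$ equals your $-(\rho_n\lambda_n'+\lambda_n)(\partial_xu_n)^2$), and the envelope-at-maximum argument together with the decay $V_n\to 0$ at infinity matches the paper's justification.
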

\begin{proof}
    We define the singular-diffusion $V_{n}$ as 
    \begin{equation}
        V_{n} = \lambda_{n}(\rho_{n})\partial_{x}u_{n}.
    \end{equation} This corresponds to the active potential used by Constantin et al \cite{Constantin_2020}. The evolution equation satisfied by $V_{n}$ is given by (see \cite{HCL, HCLMehmood})
    \begin{align}
    \partial_{t} V_{n} + \left(u_{n} + \frac{\lambda_{n}(\rho_{n})}{\rho_{n}^{2}} \partial_{x}\rho_{n} \right) \partial_{x}V_{n} - \frac{\lambda_{n}(\rho_{n})}{\rho_{n}} \partial_{x}^{2}V_{n} = - \frac{(\lambda_{n}'(\rho_{n})\rho_{n} + \lambda_{n}(\rho_{n}))}{(\lambda_{n}(\rho_{n}))^{2}}V_{n}^{2}. \label{Veqn}
\end{align} 
Next, we introduce the maximum function 
\begin{equation} \label{Vnmax}
    V_{n}^{M}(t) := (\lambda_{n}(\rho_n)\partial_{x}u_{n})(t,x_{t}),
\end{equation} 
where $x_{t}$ is the point where $\lambda_{n}\partial_{x}u_{n}(\cdot,t)$ obtains its maximum. 
To prove \eqref{VnONESIDE} it is sufficient to show that
\begin{equation} \label{VnCONS}
    V_{n}^{M}(t) \le V_{n}^{M}(0)
\end{equation}We need to justify that \eqref{Vnmax} is well-defined. First note that $V_{n}(\cdot, t)$ is a uniformly continuous function belonging to $L^{2}(\mathbb{R})$. Therefore, assuming that $\esssup_{x \in \mathbb{R}} V_{n}(t,x) > 0$, we can say that $V_{n}(\cdot, t)$ attains its supremum for any $t \in [0,T]$ fixed. In fact, since $\rho_{n} > 0$, it is true that $\esssup_{x \in \mathbb{R}} V_{n}(t,x) \ge 0$. This is because $\partial_{x}u_{n}$ is uniformly continuous and cannot be negative everywhere since this would contradict the fact that $u_{n} \in H^{1}(\mathbb{R})$. The case $\esssup_{x \in \mathbb{R}} V_{n}(t,x) = 0$ can only occur if $u_{n}(\cdot, t)$ is constant, in which case $V_{n}(\cdot, t) \equiv 0$ and so $V_{n}(\cdot, t)$ trivially satisfies \eqref{VnCONS}. As a result we may assume without loss of generality that $\esssup_{x \in \mathbb{R}} V_{n}(t,x) > 0$, in which case \eqref{Vnmax} is well-defined. Evaluating \eqref{Veqn} at $(x_{t},t)$ and arguing as in the previous lemma, we arrive at $\partial_{t}V_{n}^{M}(t) \le 0$ which implies \eqref{VnCONS}. 
 \end{proof}

\begin{corollary}
    Assuming the condition \eqref{A3A} on the initial data, the velocity $u_{n}$ satisfies \begin{equation} \label{dxuONESIDE}
        \partial_{x}u_{n} \le C.
    \end{equation}
\end{corollary}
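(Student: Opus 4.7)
The plan is to combine the just-proven maximum principle \eqref{VnONESIDE} for the active potential $V_n = \lambda_n(\rho_n)\partial_x u_n$ with the explicit form of $\lambda_n$ and the density lower bound established at the end of Section~\ref{sec:existence-n}. First, I would apply \eqref{VnONESIDE} together with the initial-data assumption \eqref{A3A} to obtain the pointwise bound
\begin{equation*}
    \lambda_n(\rho_n)\partial_x u_n \;\le\; \frac{\gamma_n}{\bigl(M_n^0 T + (r_n^0)^{-1}\bigr)^{\gamma_n+1}}
\end{equation*}
valid on $[0,T]\times\mathbb{R}$.

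Next, I would substitute the explicit form of the viscosity. Since $p_n(\rho) = \rho^{\gamma_n}$ we have $\lambda_n(\rho_n) = \rho_n^2 p_n'(\rho_n) = \gamma_n \rho_n^{\gamma_n+1}$, so dividing the above inequality by $\gamma_n \rho_n^{\gamma_n+1} > 0$ (using that $\rho_n$ is strictly positive on $[0,T]\times\mathbb{R}$ by the global existence theorem) yields
\begin{equation*}
    \partial_x u_n(t,x) \;\le\; \frac{1}{\bigl(\rho_n(t,x)\,(M_n^0 T + (r_n^0)^{-1})\bigr)^{\gamma_n+1}}.
\end{equation*}

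The key step is then to invoke the density lower bound from the corollary at the end of Section~\ref{sec:existence-n}, namely $\rho_n(t,x) \ge 1/(M_n^0 t + (r_n^0)^{-1})$. For any $t \in [0,T]$ this gives
\begin{equation*}
    \rho_n(t,x)\bigl(M_n^0 T + (r_n^0)^{-1}\bigr) \;\ge\; \frac{M_n^0 T + (r_n^0)^{-1}}{M_n^0 t + (r_n^0)^{-1}} \;\ge\; 1,
\end{equation*}
so that the denominator in the previous display is at least $1$. This immediately yields $\partial_x u_n \le 1$, which is the desired bound \eqref{dxuONESIDE} (with $C = 1$, in fact independent of $n$). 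There is no real obstacle here: the non-trivial work was done in proving the maximum principle for $V_n$ and the density lower bound, and the present corollary is a short algebraic consequence whose only subtlety is checking that the large exponent $\gamma_n+1$ does not spoil the bound, which is precisely why the assumption \eqref{A3A} is calibrated with the factor $(M_n^0 T + (r_n^0)^{-1})^{\gamma_n+1}$ in the denominator.
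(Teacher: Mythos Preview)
Your proof is correct and follows essentially the same route as the paper: both combine the maximum principle \eqref{VnONESIDE} with assumption \eqref{A3A}, then divide by $\lambda_n(\rho_n)=\gamma_n\rho_n^{\gamma_n+1}$ and use the density lower bound $\rho_n(t,x)\ge (M_n^0 t+(r_n^0)^{-1})^{-1}$ from Section~\ref{sec:existence-n} to conclude. Your write-up is in fact a bit more explicit (you isolate the cancellation $\rho_n(M_n^0 T+(r_n^0)^{-1})\ge 1$ and note that the resulting constant is $C=1$), but the argument is the same.
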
 \begin{proof}
    Recall that $\lambda_{n}(\rho_{n}) = \gamma_{n} \rho_{n}^{\gamma_{n}+1}$. The lower bound \eqref{lbdensity} implies that
\begin{equation}
    \frac{1}{\lambda_{n}(\rho_{n})} \le \frac{\big(M_n^0T + (r^0_n)^{-1}\big)^{\gamma_{n}+1}}{\gamma_{n}} ~~ \text{ on } [0,T] \times \mathbb{R}.
\end{equation} Going back to \eqref{VnONESIDE}, the assumption \eqref{A3A} yields that
\begin{equation}
    \partial_{x}u_{n} \le  \frac{C\cdot \gamma_{n} \big(M_n^0T + (r^0_n)^{-1}\big))^{\gamma_{n}+1}}{\gamma_{n}\big(M_n^0T + (r^0_n)^{-1}\big)^{\gamma_{n}+1}} = C.
\end{equation}
\end{proof} This leads to a local integrability estimate for $\partial_{x}u_{n}$.
\begin{corollary}
    For any compact $K \subset \mathbb{R}$ the velocity $u_{n}$ satisfies
    \begin{equation} \label{dxuL1est}
        \|\partial_{x}u_{n}\|_{L^{\infty}(0,T; L^{1}(K))} \le C_{K},
    \end{equation} where $C_{K} > 0$ is a constant depending on the compact set $K$ but independent of $n$.
\end{corollary}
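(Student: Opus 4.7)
The plan is to exploit the classical decomposition $|\partial_x u_n| = 2(\partial_x u_n)^+ - \partial_x u_n$, which converts a one-sided derivative bound together with an $L^\infty$ bound on the function itself into a full $BV_{\mathrm{loc}}$ bound. Both ingredients are already in hand: the previous Corollary yields $\partial_x u_n \le C$ uniformly in $n$ (hence $(\partial_x u_n)^+ \le C$ pointwise a.e.), and the earlier maximum-principle bound \eqref{unLinf} gives $\|u_n\|_{L^\infty_{t,x}} \le C$.

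Fix a compact set $K \subset \mathbb{R}$, and enclose it in a bounded interval $K \subset [a,b]$. For every $t \in [0,T]$, I would write
\begin{equation*}
\int_K |\partial_x u_n(t,x)|\,dx \;=\; 2\int_K (\partial_x u_n(t,x))^+\,dx \;-\; \int_K \partial_x u_n(t,x)\,dx.
\end{equation*}
The first term is bounded by $2C\,|K|$ thanks to the one-sided Lipschitz estimate \eqref{dxuONESIDE}. For the second term, since $(\rho_n,u_n)$ is a regular solution, $u_n(t,\cdot)$ is smooth and $\partial_x u_n(t,\cdot) \in L^1(a,b)$, so I may apply the fundamental theorem of calculus on $[a,b]$ (extending the integration from $K$ to $[a,b]$ and absorbing the positive-part contribution from the extra strip into the first term), giving
\begin{equation*}
\left|\int_a^b \partial_x u_n(t,x)\,dx\right| \;=\; |u_n(t,b)-u_n(t,a)| \;\le\; 2\|u_n\|_{L^\infty_{t,x}} \;\le\; 2C.
\end{equation*}
Combining the two contributions yields $\|\partial_x u_n(t,\cdot)\|_{L^1(K)} \le 2C(|K| + b-a) + 2C =: C_K$ uniformly in $t \in [0,T]$ and $n$, which is the desired estimate \eqref{dxuL1est}.

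There is no real obstacle here, as both of the required uniform estimates have just been established in the preceding lemmas; the only subtlety is the bookkeeping when going from the interval $K$ to an enclosing interval $[a,b]$ on which one can cleanly apply the fundamental theorem of calculus, and this costs only a harmless enlargement of the constant $C_K$.
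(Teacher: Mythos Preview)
Your proof is correct and follows essentially the same route as the paper: both use the decomposition $|\partial_x u_n| = 2(\partial_x u_n)^+ - \partial_x u_n$, bound the positive part pointwise via the one-sided Lipschitz estimate \eqref{dxuONESIDE}, and control the signed integral by the fundamental theorem of calculus together with the uniform $L^\infty$ bound \eqref{unLinf}. The only cosmetic difference is that the paper assumes without loss of generality that $K=[-M,M]$ is itself an interval, whereas you enclose $K$ in $[a,b]$; your handling of this (absorbing the extra strip into the positive-part term) is fine, though the cleanest bookkeeping is simply to bound $\int_K|\partial_x u_n|\,dx \le \int_a^b|\partial_x u_n|\,dx$ at the outset and then apply the decomposition directly on $[a,b]$.
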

\begin{proof}
     Adopting the notation $(f)_{+} := \max(0, f)$ for an appropriate function $f$, we have the decomposition \begin{equation} \label{pmdecomposition}
        |f| = 2(f)_{+} - f.
    \end{equation} Fix a compact set $K \subset \mathbb{R}$. For simplicity we can assume $K = [-M,M]$ for some $M>0$. Then using \eqref{pmdecomposition},
    \begin{align*}
        \int_{K} |\partial_{x}u_{n}|~dx &=  2 \int_{K} (\partial_{x}u_{n})_{+}~dx - \int_{K} \partial_{x}u_{n}~dx \\[1ex] &\le 2C_{K} - \left( u_{n}(M,t) - u_{n}(-M, t) \right) \\[1ex] &\le C_{K},
    \end{align*} where we have used \eqref{dxuONESIDE} and \eqref{unLinf}.
\end{proof}
\subsection{Estimates for the potential \texorpdfstring{$\pi_{n}$}{Lg}}
Due to the definition~\eqref{df:pi_n}, we have
\begin{equation} \label{piexact}
   \pi_n = \pi_{n}(\rho_{n}) = \frac{\gamma_{n}}{\gamma_{n}+1}\rho_{n}^{\gamma_{n}+1}.
\end{equation}

The aim of this subsection is to prove the following result.
\begin{lemma} 
     For any compact $K \subset \mathbb{R}$, there exists a positive constant $C_K$ independent of $n$, such that the potential $\pi_{n}$ satisfies 
     \begin{equation} \label{pilemma}
        \|\pi_{n}\|_{L^{\infty}(0,T; L^{1}(K))} \le C_{K}.
     \end{equation}
\end{lemma}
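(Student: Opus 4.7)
The strategy is to upgrade the gradient control of Corollary~\ref{dxpilemma} into a full uniform $L^\infty$ bound on $\pi_n$, from which the claim is immediate. Since Corollary~\ref{dxpilemma} already provides $\|\partial_x \pi_n\|_{L^\infty_t L^1_x} \le C$ independently of $n$, the only missing ingredient is a uniform pointwise bound at some anchor point. The prescribed background state $\overline{\rho}$ at infinity, built into the regular solutions of Theorem~\ref{globalexistence}, supplies exactly this anchor.

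First, by Theorem~\ref{globalexistence} we have $\rho_n - \overline{\rho} \in C(0,T; H^3(\mathbb{R}))$, so the one-dimensional Sobolev embedding $H^1(\mathbb{R}) \hookrightarrow C_0(\mathbb{R})$ tells us that $\rho_n(t, \cdot)$ is continuous and $\rho_n(t, x) \to \overline{\rho}$ as $|x| \to \infty$ for every $t \in [0,T]$. Using the explicit formula \eqref{piexact} together with $\overline{\rho} \in (0, 1]$, we obtain the crucial tail estimate
$$\lim_{|x| \to \infty} \pi_n(t, x) = \frac{\gamma_n}{\gamma_n+1}\,\overline{\rho}^{\,\gamma_n+1} \le 1.$$

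Next, since $\partial_x \pi_n(t, \cdot) \in L^1(\mathbb{R})$ uniformly in $t$ and $n$, the function $\pi_n(t, \cdot)$ is absolutely continuous in $x$, so for each $(t, x) \in [0,T] \times \mathbb{R}$ we can write
$$\pi_n(t, x) = \lim_{y \to -\infty} \pi_n(t, y) + \int_{-\infty}^x \partial_y \pi_n(t, y)\, dy.$$
Combining the two previous displays with $\pi_n \ge 0$ and Corollary~\ref{dxpilemma} yields
$$0 \le \pi_n(t, x) \le 1 + \|\partial_x \pi_n(t, \cdot)\|_{L^1(\mathbb{R})} \le 1 + C,$$
uniformly in $t \in [0,T]$, $x \in \mathbb{R}$, and $n \in \mathbb{N}$. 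For any compact $K \subset \mathbb{R}$ this immediately gives $\|\pi_n\|_{L^\infty(0,T; L^1(K))} \le (1 + C)|K| =: C_K$, as desired.

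The real substance of the argument is identifying that the anchor value for $\pi_n$ at infinity is uniformly bounded in $n$: this is precisely where the constraint $\overline{\rho} \le 1$ from the initial data enters decisively. A background state $\overline{\rho} > 1$ would cause $\overline{\rho}^{\,\gamma_n+1}$ to blow up with $n$, and the whole strategy would collapse. Besides this observation, the proof is just an integration of the already-established $L^1_x$ bound on $\partial_x \pi_n$, so no further technical obstacle arises.
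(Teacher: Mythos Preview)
Your argument is correct and genuinely different from the paper's. The paper never exploits the far-field behaviour of $\rho_n$; instead it tests the momentum equation \eqref{a2} against an antiderivative $\psi(x)=\int_{-\infty}^x\phi(y)\,dy$ of a positive $\phi\in\mathcal{D}(\mathbb{R})$ to bound $\int_0^t\!\int_{\mathbb{R}}\phi\,\lambda_n(\rho_n)\partial_x u_n$, and then integrates the evolution equation $\partial_t\pi_n+u_n\partial_x\pi_n+\lambda_n(\rho_n)\partial_x u_n=0$ against $\phi$ to control $\int_{\mathbb{R}}\phi\,\pi_n(t,x)\,dx$. The initial value $\int\phi\,\pi_n^0$ is handled via the hypothesis \eqref{A1rho}.

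Your route is considerably shorter: you anchor $\pi_n$ at spatial infinity using the background state $\overline{\rho}\in(0,1]$ and then integrate the already-known $L^1$ bound on $\partial_x\pi_n$. This yields a \emph{global} uniform $L^\infty$ bound on $\pi_n$, which is actually stronger than the local $L^1$ estimate stated in the lemma (and immediately gives the $L^\infty(K)$ bound used in the next corollary without invoking the Sobolev embedding $W^{1,1}\hookrightarrow L^\infty$). The trade-off is that your proof leans on the standing assumption $\overline{\rho}\le 1$ in an essential way, whereas the paper's PDE-based argument would still deliver a local bound if the background were allowed to exceed $1$. In the present setting, where $\overline{\rho}\le 1$ is already part of the framework of regular solutions, your approach is the cleaner one.
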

\begin{proof}
    We fix an arbitrary positive $\phi \in \mathcal{D}(\mathbb{R})$ and define the test function \begin{equation}
        \psi(t,x) := \int_{-\infty}^{x} \phi(y)~dy.
    \end{equation} Note that $\|\psi\|_{L^{\infty}_{t,x}} \le C$. Multiplying the momentum equation \eqref{a2} by $\psi$ and integrating by parts in space and time, we get
    \begin{align*}
        \int_{\mathbb{R}} \psi(\rho_{n}u_{n}(t,x) - \rho_{n}^{0}u_{n}^{0}(x))~dx  - \int_{0}^{t} \int_{\mathbb{R}} \phi \rho_{n}u_{n}^{2}~dxds = - \int_{0}^{t} \int_{\mathbb{R}} \phi \lambda_{n}(\rho_{n})\partial_{x}u_{n}~dxds.
    \end{align*} Using \eqref{rhouL1}, \eqref{unLinf} and the assumption \eqref{A2mom}, we get
    \begin{equation} \label{VnPHI}
        \left| \int_{0}^{t} \int_{\mathbb{R}} \phi \lambda_{n}(\rho_{n})\partial_{x}u_{n}~dxds \right| \le C.
    \end{equation} Next, the evolution equation for $\pi_{n}$ is given by
    \begin{equation} \label{eqnpin}
        \partial_{t}\pi_{n} + u_{n} \partial_{x}\pi_{n} + \lambda_{n}(\rho_{n})\partial_{x}u_{n} = 0.
    \end{equation} Multiplying by $\phi$ and integrating,
    \begin{equation}
        \int_{\mathbb{R}} \phi \pi_{n}(t,x) - \phi \pi_{n}(x,0)~dx + \int_{0}^{t} \int_{\mathbb{R}} \phi u_{n} \partial_{x}\pi_{n}~dxds + \int_{0}^{t} \int_{\mathbb{R}}\phi \lambda_{n}(\rho_{n})\partial_{x}u_{n}~dxds = 0.
    \end{equation} Using \eqref{VnPHI}, \eqref{unLinf}, \eqref{dxpiL1} and the assumption \eqref{densityposassumption} we get
    \begin{equation}
        \int_{\mathbb{R}} \phi \pi_{n}(t,x)~dxds \le C,
    \end{equation} which yields the required bound.
\end{proof}
\begin{corollary}
    For any compact $K \subset \mathbb{R}$ the density $\rho_{n}$ satisfies
    \begin{equation} \label{rhoLINFlocal}
        \|\rho_{n}\|_{L^{\infty}(0,T; L^{\infty}(K))} \le C_{K}.
    \end{equation}
\end{corollary}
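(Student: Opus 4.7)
The plan is to exploit the explicit identity $\pi_n = \frac{\gamma_n}{\gamma_n+1}\rho_n^{\gamma_n+1}$ given by~\eqref{piexact} and simply invert it. Since $\gamma_n \ge 1$ for $n$ large, taking $(\gamma_n+1)$-th roots yields
$$\|\rho_n\|_{L^\infty((0,T)\times K)} \le \left(\tfrac{\gamma_n+1}{\gamma_n}\,\|\pi_n\|_{L^\infty((0,T)\times K)}\right)^{1/(\gamma_n+1)} \le \max\bigl(2\,\|\pi_n\|_{L^\infty((0,T)\times K)},\,1\bigr),$$
uniformly in $n$. So the task reduces to establishing a uniform-in-$n$ pointwise bound on $\pi_n$, locally in space.

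To obtain such a bound, I would promote the $L^1$ estimate of the previous lemma to an $L^\infty$ estimate by combining it with Corollary~\ref{dxpilemma}, which already supplies $\|\partial_x\pi_n\|_{L^\infty_tL^1_x}\le C$. Together these give $\pi_n$ uniformly bounded in $L^\infty(0,T; W^{1,1}(K))$, and the one-dimensional Sobolev embedding $W^{1,1}(K)\hookrightarrow L^\infty(K)$ (with embedding constant depending only on $|K|$) then delivers $\|\pi_n\|_{L^\infty(0,T;L^\infty(K))} \le C_K$. Plugging back into the inversion inequality produces the stated bound.

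I do not expect any serious obstacles, as every ingredient has already been assembled in the preceding estimates of this section. The points that deserve a quick check are (i) that the $W^{1,1}\hookrightarrow L^\infty$ embedding requires a bounded domain, which is precisely why the bound is local in $K$ and not global on $\R$, and (ii) that the factor $1/(\gamma_n+1)\to 0$ appearing in the inversion does not destroy uniformity: the $(\gamma_n+1)$-th root of a number $\ge 1$ stays below that number, so the root works in our favor and in fact drives the bound on $\|\rho_n\|_{L^\infty}$ towards $1$, in agreement with the limit constraint $\rho\le 1$.
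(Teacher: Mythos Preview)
Your proposal is correct and follows essentially the same route as the paper: combine the $L^\infty_t L^1_x$ bound on $\partial_x\pi_n$ from Corollary~\ref{dxpilemma} with the $L^\infty_t L^1(K)$ bound on $\pi_n$ from the preceding lemma, apply the one-dimensional embedding $W^{1,1}\hookrightarrow L^\infty$ to get $\|\pi_n\|_{L^\infty((0,T)\times K)}\le C_K$, and then invert the identity~\eqref{piexact} by taking $(\gamma_n+1)$-th roots. Your observation that the root drives the bound towards $1$ is exactly the content of the remark the paper makes immediately after this corollary.
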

\begin{proof}
   We already know that $\rho_{n} > 0$ from \eqref{lbdensity}. Using \eqref{dxpiL1}, \eqref{pilemma} and the Sobolev embedding $W^{1,1}(\mathbb{R}) \hookrightarrow L^{\infty}(\mathbb{R})$ we get 
   $\|\pi_{n}\|_{L^{\infty}(0,T; L^{\infty}(K))} \le C_{K}$. 
   Recalling \eqref{piexact} this implies
   \begin{equation} \label{rhonUPPER}
       \rho_{n}^{\gamma_{n}+1} \le C_{K} ~~ \text{ on } K \times [0,T],
   \end{equation} 
   for any compact $K \subset \mathbb{R}$, which gives us \eqref{rhoLINFlocal}.
\end{proof} 
\begin{remark}
    For $n$ fixed, \eqref{dxpiL1} and \eqref{pilemma} imply that the restriction of $\pi_{n}$ to a compact set $K$ is bounded by a constant which depends on $K$. But rooting both sides of \eqref{rhonUPPER}, we find that $\rho_{n}$ is uniformly bounded by a constant which converges to $1$ as $n \to \infty$. This means that upon passing to the limit in $n$, our limit density $\rho$ will be bounded by $1$ on any compact $K$ which actually implies that $0 \le \rho \le 1$ a.e. on $\mathbb{R}$. Thus, although we only obtain a local bound on $\rho_{n}$ for fixed $n$, this transforms into a global bound for $\rho$ in the limit.
\end{remark}

In summary, we have shown that under the assumptions of Theorem \ref{existencedual} there exists $C>0$ independent of $n$ with
\begin{equation}
     \|\rho_{n}u_{n}\|_{L^{\infty}_{t}L^{1}_{x}} + \|\rho_{n}w_{n}\|_{L^{\infty}_{t}L^{1}_{x}}  \le C, \label{dualbounds}
\end{equation} 
and for any compact $K$, there exists $C_K > 0$ such that
\begin{equation}
    \|u_{n}\|_{L^{\infty}(0,T; W^{1,1}(K))} + \|\pi_{n}\|_{L^{\infty}(0,T; W^{1,1}(K))} +  \|\rho_{n}\|_{L^{\infty}((0,T) \times K)} \le C_K. \label{dualbounds-bis}
\end{equation} 

%%%%%%%%%%%%%%%%%%%%%%
\section{The limit passage}{\label{sec:limit}}
In this section we complete the proofs of Theorems \ref{limitexistence} and \ref{existencedual}.
%%%%%%%
\subsection{Existence of duality solutions to the hard-congestion model}
 \begin{proof}[Proof of Theorem \ref{existencedual}] 
 We first make note of a key result which is mentioned without proof in Remark 4.2.4. of \cite{bouchut1998one}. We provide a proof in Section~\ref{sec:final}.
\begin{proposition} \label{weakimpliesduality}
    Suppose $f \in C([0,T]; L^{1}_{loc,w}(\mathbb{R}))$ is a weak solution to $\partial_{t} f + \partial_{x}(af) = 0$ on $(0,T) \times \mathbb{R}$ in the sense that for any $t \in [0,T]$,
    \begin{equation} \label{weakcont}
         \int_{0}^{t}\int_{\mathbb{R}} f \partial_{t}\phi + f u \partial_{x} \phi ~dxds = \int_{\mathbb{R}} f\phi(t,x) - f\phi(0,x)~dx ~~ \forall ~ \phi \in W^{1,\infty}([0,T] \times \mathbb{R}),
    \end{equation}
    % \begin{equation} \label{cons}
    %     \partial_{t} f + \partial_{x}(af) = 0 ~~ \text{ in } (0,T) \times \mathbb{R} {\color{blue} ~\text{ in } \mathcal{D}' },
    % \end{equation}
    where $a \in L^{\infty}((0,T) \times \mathbb{R})$ satisfies the OSL condition \eqref{OSLC}. Then $f$ is also a duality solution.
\end{proposition}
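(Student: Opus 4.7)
The strategy is to test the weak formulation \eqref{weakcont} against reversible solutions of the backward transport equation and use the equation satisfied by these solutions to kill the resulting bulk integral. Fix $\tau \in (0,T]$ and let $p$ be any reversible solution with compact support in space to $\partial_t p + a \partial_x p = 0$ on $(0,\tau) \times \mathbb{R}$. The first task is to verify that $p$ qualifies as a test function in \eqref{weakcont}. Since $p \in \mathcal{L} = Lip_{loc}((0,\tau) \times \mathbb{R})$ and $p$ is compactly supported in $x$, $\partial_x p$ is essentially bounded globally. Combined with the transport equation and $a \in L^\infty$, this gives $\partial_t p \in L^\infty$, so $p \in W^{1,\infty}([0,\tau] \times \mathbb{R})$ is an admissible $\phi$ in \eqref{weakcont}.

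I would then invoke the structural characterisation of reversible solutions from \cite{bouchut1998one}: any reversible $p$ satisfies the backward transport equation a.e. with respect to the universal representative, namely
\begin{equation*}
\partial_t p + \hat{a}\, \partial_x p = 0 \quad \text{a.e. in } (0,\tau) \times \mathbb{R},
\end{equation*}
where $\hat{a}$ is the universal representative of $a$ from Theorem \ref{DUALITYresults}. Since $f(s, \cdot) \in L^1_{loc}(\mathbb{R})$ is absolutely continuous with respect to Lebesgue measure and $a = \hat{a}$ a.e., inserting $\phi = p$ in \eqref{weakcont} yields
\begin{equation*}
\int_{\mathbb{R}} f(t,x)\, p(t,x)\, dx - \int_{\mathbb{R}} f(0,x)\, p(0,x)\, dx = \int_0^t \!\! \int_{\mathbb{R}} f(\partial_t p + a \partial_x p)\, dx\, ds = \int_0^t \!\! \int_{\mathbb{R}} f(\partial_t p + \hat{a}\, \partial_x p)\, dx\, ds = 0.
\end{equation*}

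Hence $t \mapsto \int_{\mathbb{R}} p(t,x)\, f(t,dx)$ is constant on $[0,\tau]$ for every reversible $p$ with compact spatial support and every $\tau \in (0,T]$, which is exactly Definition \ref{defnDualityORIGINAL}. The delicate point, and the main obstacle to a fully self-contained argument, is justifying the a.e. identity $\partial_t p + \hat{a}\, \partial_x p = 0$ for reversible $p$: this does not follow directly from the abstract definition of reversibility recalled in the excerpt and requires invoking a structural property from \cite{bouchut1998one} (which is precisely what singles reversibility out as the correct class for stability and for the duality framework). Everything else reduces to bookkeeping on regularity and the admissibility of $p$ as a test function.
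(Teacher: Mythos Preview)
Your proof is correct and follows essentially the same approach as the paper's own proof: test the weak formulation against a reversible solution $p$ and use the backward transport equation to annihilate the bulk term. Your version is in fact more careful than the paper's, explicitly verifying that $p \in W^{1,\infty}$ is an admissible test function and flagging the a.e.\ identity $\partial_t p + \hat a\,\partial_x p = 0$ as the structural input from \cite{bouchut1998one}, whereas the paper simply asserts the vanishing of the bulk integral.
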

We first deal with the sequence $\rho_{n}$. The bound \eqref{rhoLINFlocal} implies that there exists $\rho \in L^{\infty}((0,T) \times \mathbb{R})$ such that up to a subsequence, \begin{equation}
    \rho_{n} \rightharpoonup^{\star} \rho ~~ \text{ in } L^{\infty}((0,T) \times \mathbb{R}). 
\end{equation} Indeed, \eqref{rhoLINFlocal} implies that $\rho \le 1$ on any compact $K \subset \mathbb{R}$ and therefore $0 \le \rho \le 1$. Next, the estimate $\|\rho_{n}u_{n}\|_{L^{\infty}_{t}L^{1}_{x}} \le C$ (see \eqref{rhouL1}) and the continuity equation \eqref{HCL-n1} imply that \begin{equation} \label{dtrho}
\|\partial_{t}\rho_{n}\|_{L^{\infty}_{t}W^{-2,2}_{x}} \le C.
    \end{equation} We now recall the following compensated compactness result.
    \begin{lemma}[Lemma 5.1, \cite{mathlions}] \label{lionsCC}
Let $g_{n}, h_{n}$ converge weakly to $g,h$ respectively in $L^{p_{1}}(0,T; L^{p_{2}}(\Omega))$ and $ L^{q_{1}}(0,T; L^{q_{2}}(\Omega))$ where $1 \le p_{1}, ~ p_{2} \le +\infty$,
\begin{equation*}
    \frac{1}{p_{1}} + \frac{1}{q_{1}} = \frac{1}{p_{2}} + \frac{1}{q_{2}} = 1.
\end{equation*} Assume in addition that\begin{itemize}
    \item \normalfont{(A1):} $\partial_{t}g_{n}$ is bounded in $L^{1}(0,T; W^{-m,1}(\Omega))$ for some $m \ge 0$ independent of $n,$
    \item \normalfont{(A2):} $\|h_{n} - h_{n}(\cdot + \zeta, t)\|_{L^{q_{1}}(0,T;L^{q_{2}}(\Omega))} \to 0$ as $|\zeta| \to 0,$ uniformly in $n$.  \vspace{5pt}
\end{itemize} Then $g_{n}h_{n}\longrightarrow gh$ in $\mathcal{D}'( (0,T) \times \Omega)$.
\end{lemma}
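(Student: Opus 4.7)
My plan is to prove this classical compensated-compactness lemma via the following strategy: use (A2) to replace $h_n$ by a spatially regularized sequence, upgrade the weak convergence of $g_n$ to \emph{strong} convergence in a negative-order Sobolev space through an Aubin--Lions argument driven by (A1), and finally couple the two via the duality afforded by the conjugacy relations $1/p_1+1/q_1 = 1/p_2+1/q_2=1$.

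Concretely, I would test against $\varphi \in \mathcal{D}((0,T)\times \Omega)$, fix a compact set $K \Subset \Omega$ that contains a neighborhood of the spatial support of $\varphi$, and introduce a standard spatial mollifier $\eta^\epsilon$ on $\Omega$, setting $h_n^\epsilon := h_n *_x \eta^\epsilon$. Then split
\[ \int_0^T\!\!\int g_n h_n\, \varphi\, dx\, dt = \int_0^T\!\!\int g_n h_n^\epsilon\, \varphi\, dx\, dt + \int_0^T\!\!\int g_n\, (h_n - h_n^\epsilon)\, \varphi\, dx\, dt. \]
The remainder term is controlled by H\"older's inequality through $\|g_n\|_{L^{p_1}L^{p_2}}\, \|h_n - h_n^\epsilon\|_{L^{q_1}(L^{q_2}(K'))}$ on a slight enlargement $K'\Supset K$, and this tends to $0$ uniformly in $n$ as $\epsilon \to 0$ thanks to (A2); the same estimate applies to the analogous piece involving the limits $(g,h,h^\epsilon)$.

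For the main term at fixed $\epsilon>0$, I will invoke the Aubin--Lions lemma with the triple $L^{p_2}(K) \hookrightarrow\hookrightarrow W^{-m,p_2}(K) \hookrightarrow W^{-m,1}(K)$; the first embedding is compact by Rellich--Kondrachov on the bounded set $K$ and the second is continuous because $L^{p_2}(K)\hookrightarrow L^1(K)$. Combined with the $L^{p_1}(L^{p_2}_{loc})$-boundedness of $g_n$ and assumption (A1), this yields strong convergence $g_n \to g$ in $L^{p_1}(0,T; W^{-m,p_2}(K))$. On the other hand, the smoothness of $\eta^\epsilon$ makes $h_n^\epsilon\varphi$ bounded in $L^{q_1}(0,T; W^{m,q_2}(K))$ (with $\epsilon$-dependent constants) and weakly convergent to $h^\epsilon\varphi$ there. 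Because the conjugacy relations identify this space as the topological dual of $L^{p_1}(0,T;W^{-m,p_2}(K))$ (using $p_2' = q_2$ and $p_1' = q_1$), the strong--weak pairing delivers $\int g_n h_n^\epsilon \varphi \to \int g h^\epsilon \varphi$. Sending $\epsilon\to 0$ afterwards and combining with the uniform remainder estimate closes the proof.

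The principal technical obstacle I anticipate is handling the duality cleanly on the compact set $K$ and checking the Aubin--Lions hypotheses carefully in the degenerate cases $p_1,p_2 \in\{1,\infty\}$, where weak-$\star$ topology and minor variants of the compactness theorem need to be invoked; each such case, however, is a routine adjustment of the scheme above.
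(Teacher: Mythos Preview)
The paper does not supply a proof of this lemma: it is quoted verbatim as Lemma~5.1 of Lions~\cite{mathlions} and used as a black box in the limit passage. Your proposal reproduces the standard argument behind that result and is essentially correct. The scheme---mollify $h_n$ in space via (A2), gain strong compactness of $g_n$ in a negative Sobolev space through (A1), and pair the two through the conjugacy relations---is exactly the one found in Lions' text.

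Two minor points are worth flagging. First, since (A1) only gives an $L^1$-in-time bound on $\partial_t g_n$, you should invoke Simon's version of the Aubin--Lions lemma rather than the classical one to justify the strong convergence step. Second, when $m=0$ the embedding $L^{p_2}(K)\hookrightarrow W^{-m,p_2}(K)$ is the identity and not compact; this is harmless because one can freely increase $m$ (the bound in (A1) persists under the continuous embedding $L^1\hookrightarrow W^{-m,1}$), but it should be said explicitly. With these clarifications your argument is complete.
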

    Applying this lemma once with $(g_{n}, h_{n}) = (\rho_{n}, u_{n})$ and again with $(g_{n}, h_{n}) = (\rho_{n}u_{n}, u_{n})$ yields
\begin{equation} \label{rhouCONV}
    \rho_{n}u_{n} \to \rho u, ~ \rho_{n}u_{n}^{2} \to \rho u^{2} ~~ \text{ in } \mathcal{D}'((0,T) \times \mathbb{R}).
\end{equation} This is enough to verify that the limit $\rho$ satisfies the continuity equation in the distributional sense with $0 \le \rho \le 1$. Our next task is to prove the existence of a duality solution to the momentum equation. To this end, we define $m_{n} := \rho_{n}w_{n}$. For each $n$ we know that $m_{n}$ is (at least) a $C([0,T]; L^{\infty}(\mathbb{R}))$ solution to \eqref{AR-2}. In particular, by Proposition \ref{weakimpliesduality} they are duality solutions. Using the hypotheses of Theorem \ref{existencedual}, we can verify that the assumptions of the stability result (the third item in Theorem \ref{DUALITYresults}) are met. As a consequence, there exists $m \in \mathcal{S}_{\mathcal{M}}$ such that  $m_{n} \to m \text{ in } \mathcal{S}_{\mathcal{M}}$ where $m$ solves
\begin{equation} \label{mDUAL}
    \begin{cases}
 \partial_{t}m+\partial_{x}(m u)=0 & \text{ in }(0,T) \times \mathbb{R}, \\
m(x,0) = m^{0}(x) \in \mathcal{M}_{loc}(\mathbb{R}),
\end{cases}
\end{equation} in the duality sense. Next, we need to verify the switching relation  \begin{equation} \label{switchingrelation}
    (1-\rho)\pi = 0 ~ \text{ a.e. in } [0,T] \times \mathbb{R}.  
\end{equation} This can be done in a very similar way to \cite{HCLMehmood}. 
Firstly,
\begin{align*}
    (1-\rho_{n})\pi_{n} &= \frac{\gamma_{n}}{\gamma_{n}+1}(1-\rho_{n}) \rho_{n}^{\gamma_{n}+1} \\[1ex]
    &= \frac{\gamma_{n}}{\gamma_{n}+1} \left((1-\rho_{n})  \rho_{n}^{\gamma_{n}+1} \mathbbm{1}_{\left\{ (t,x) ~ : ~ 0 < \rho_{n}(t,x) \le 1  \right\}} + (1-\rho_{n})  \rho_{n}^{\gamma_{n}+1} \mathbbm{1}_{\left\{ (t,x) ~ : ~  \rho_{n}(t,x) > 1  \right\}} \right) \\[1ex] &=: A_{n} + B_{n}.
\end{align*}
It is straightforward to see that $A_{n} \to 0$ a.e. as $n \to \infty$. From \eqref{rhonUPPER} we infer that the Lebesgue measure of the set $\{\rho_{n} > 1\}$ goes to $0$ as $n \to \infty$. Therefore using \eqref{weakbounds}, we can justify that
\begin{align*}
    \|B_{n}\|_{L^{1}_{t,x}} &= \int_{0}^{T} \int_{K} |(1-\rho_{n})\pi_{n}| \mathbbm{1}_{\left\{ (t,x) ~ : ~  \rho_{n}(t,x) > 1  \right\}} ~dxds \\[1ex] &\le \|1-\rho_{n}\|_{L^{\infty}_{t,x}}\|\pi_{n}\|_{L^{1}_{t}L^{1}_{loc,x}} \cdot \mu\left( \left\{ \rho_{n} > 1 \right\} \right)  \to 0.
\end{align*} Therefore we have $ \|(1-\rho_{n})\pi_{n}\|_{L^{1}(0,T; L^{1}_{loc}(\mathbb{R}))} \to 0$. On the other hand, since  we have $\|\pi_{n}\|_{L^{\infty}_{t}W^{1,1}_{x}} + \| \rho_{n}\|_{L^{\infty}_{t,x}} + \|\partial_{t}\rho_{n}\|_{L^{\infty}_{t}W^{-2,2}_{x}} \le C$ locally in space, another application of Lemma 5.1. from \cite{mathlions} gives us
\begin{equation} \label{1-rhodistr}
    (1-\rho_{n})\pi_{n} \longrightarrow (1-\rho)\pi, ~ \text{ in } \mathcal{D}'((0,T) \times \mathbb{R}).
\end{equation} 
This allows us to conclude that $(1-\rho)\pi = 0$ a.e. in $[0,T] \times K$ for any compact $K \subset \mathbb{R}$ and hence also a.e. in $[0,T] \times \mathbb{R}$.
% \begin{remark}
%     The uniform $L^{2}_{x}$ control of $\partial_{x}\pi_{n}$ is not needed in order to obtain the switching relation which is why we are able to show that it holds for our duality solution.
% \end{remark}

With this we have now established parts (i)-(iii) from Definition \ref{defndualityHCL}. It remains to prove the non-linear coupling. For fixed $n$ we know from the relation $w_{n} = u_{n} + \partial_{x}p_{n}(\rho_n)$ that $m_{n} = \rho_{n}u_{n} + \partial_{x}\pi_{n}$.
Passing to the limit $n \to \infty$ and using \eqref{rhouCONV}, we get \begin{equation}
    m = \rho u + \partial_{x} \pi ~~ \text{ in the sense of measures, }
\end{equation} with $\partial_{x}\pi \in \mathcal{M}((0,T) \times \mathbb{R})$ being the distributional derivative of $\pi$. Thus $(\rho, m, \pi)$ is a duality solution to the limit system. The entropy inequality \eqref{dualityentropy} follows directly from the properties of duality solutions, namely Theorem 4.3.6 of \cite{bouchut1998one}. The OSL condition is also verified since $\partial_{x}u_{n} \le C$ still holds under the assumptions of Theorem \ref{existencedual}. This concludes the proof of Theorem \ref{existencedual}. \end{proof}
\subsection{Existence of weak solutions to the hard-congestion model}
\begin{proof}[Proof of Theorem \ref{limitexistence}]
    Recall that for $n$ fixed we proved the existence of a regular solution $(\rho_{n}, u_{n})$ to \eqref{a1}-\eqref{a2}, which can be re-expressed as
\begin{subequations} \label{HCL-n}
\newcommand{\mystrut}{\vphantom{\pder{}{}}}
\begin{numcases}{}
    \partial_{t} \rho_{n} + \partial_{x} (\rho_{n} u_{n}) = 0, ~~ \text { on } (0,T) \times \mathbb{R}, \label{HCL-n1}   \\[1ex]
    \partial_{t} (\rho  u + \partial_{x}\pi) + \partial_{x}((\rho u + \partial_{x}\pi)u)  = 0,  ~~ \text { on } (0,T) \times \mathbb{R}. \label{HCL-n2}
\end{numcases}
\end{subequations} 
 We now obtain a stronger bound for $\partial_{x}\pi_{n}$. 
\begin{corollary}
   Under the assumptions \eqref{A2mom} and \eqref{A5w}, we have for any compact $K \subset \mathbb{R}$ that
    \begin{equation} \label{dxpiL2}
        \|\partial_{x}\pi_{n}\|_{L^{\infty}(0,T; L^{2}(K))} \le C_{K}.
    \end{equation}
\end{corollary}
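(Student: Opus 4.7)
The plan is to exploit the algebraic identity linking $\partial_{x}\pi_{n}$ to the desired velocity $w_{n}$. Starting from the explicit form $\pi_{n} = \frac{\gamma_{n}}{\gamma_{n}+1}\rho_{n}^{\gamma_{n}+1}$ recalled in \eqref{piexact}, a direct computation yields $\partial_{x}\pi_{n} = \rho_{n}\,\partial_{x}p_{n}(\rho_{n})$, and then the Aw--Rascle relation $w_{n} = u_{n} + \partial_{x}p_{n}(\rho_{n})$ from \eqref{AR-3} gives the key decomposition
\[
\partial_{x}\pi_{n} \;=\; \rho_{n}w_{n} - \rho_{n}u_{n}.
\]
I would then bound each of the two pieces separately in $L^{\infty}(0,T; L^{2}(K))$.

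For the first term, I factor $\rho_{n}w_{n} = \sqrt{\rho_{n}}\cdot\sqrt{\rho_{n}}w_{n}$. The uniform bound then follows from \eqref{rhoLINFlocal}, which supplies $\sqrt{\rho_{n}} \le \sqrt{C_{K}}$ on $(0,T) \times K$, combined with the uniform estimate $\|\sqrt{\rho_{n}}w_{n}\|_{L^{\infty}_{t}L^{2}_{x}} \le C$ coming from the relative energy lemma applied with \eqref{A5w} (i.e.\ the bound \eqref{unif5W}). For the second term, I simply use Hölder on the compact $K$:
\[
\|\rho_{n}u_{n}\|_{L^{2}(K)} \;\le\; |K|^{1/2}\,\|\rho_{n}\|_{L^{\infty}((0,T)\times K)}\,\|u_{n}\|_{L^{\infty}_{t,x}},
\]
and conclude via \eqref{rhoLINFlocal} and the global bound \eqref{unLinf}. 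Summing the two contributions yields \eqref{dxpiL2}.

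The only subtle point, and the one I want to flag as the main conceptual obstacle, is the choice of decomposition. One might be tempted to control $\partial_{x}\pi_{n} = \rho_{n}\partial_{x}p_{n}(\rho_{n})$ directly via the relative energy estimate $\|\sqrt{\rho_{n}}\partial_{x}p_{n}(\rho_{n})\|_{L^{2}_{t,x}} \le C$, but this route only delivers an $L^{2}_{t,x}$ bound and not the $L^{\infty}_{t}L^{2}_{x}$ one we actually need. Routing through $w_{n}$ is therefore essential: since $w_{n}$ is transported along $u_{n}$ by \eqref{AR-2}, the weighted quantity $\sqrt{\rho_{n}}w_{n}$ enjoys an $L^{\infty}_{t}L^{2}_{x}$ estimate, which is precisely the regularity required to upgrade the $L^{2}_{t,x}$ control of the relative energy into the localised time-pointwise bound \eqref{dxpiL2}.
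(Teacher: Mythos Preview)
Your proof is correct and follows essentially the same route as the paper: both arguments rest on the decomposition $\partial_{x}\pi_{n} = \rho_{n}w_{n} - \rho_{n}u_{n}$, bound the $\rho_{n}w_{n}$ piece via $\sqrt{\rho_{n}}\cdot\sqrt{\rho_{n}}w_{n}$ and the energy estimate \eqref{rhowENERGYest}, and handle $\rho_{n}u_{n}$ using the already-established $L^{\infty}$ bounds. The only cosmetic difference is that the paper controls $\int_{K}\rho_{n}^{2}u_{n}^{2}\,dx$ through $\|\rho_{n}u_{n}\|_{L^{\infty}_{loc}}\|\rho_{n}u_{n}\|_{L^{1}}$ (thus invoking \eqref{rhouL1} and hence \eqref{A2mom} directly), whereas you use the cleaner $|K|^{1/2}\|\rho_{n}\|_{L^{\infty}_{loc}}\|u_{n}\|_{L^{\infty}}$; both are equally valid given the uniform estimates of Section~\ref{sectionUNIF}.
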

\begin{proof}
    Recall from \eqref{rhowENERGYest} that $\|\sqrt{\rho_{n}}w_{n}\|_{L^{\infty}_tL^2_x} \le C$. Fixing a compact $K \subset \mathbb{R}$, we have
    \begin{align*}
        \int_{K} |\partial_{x}\pi_{n}|^{2}~dx
        &\le 2 \left[ \int_{K} \rho_{n}^{2}u_{n}^{2} ~dx + \int_{K} \rho_{n}^{2}w_{n}^{2} ~dx \right] \\[1ex] 
        &\le \|\rho_{n}u_{n}\|_{L^{\infty}_{t}L^{\infty}_{loc, x}} \|\rho_{n}u_{n}\|_{L^{\infty}_{t}L^{1}_{x}} + \|\rho_{n}\|_{L^{\infty}_{t, x}} \|\sqrt{\rho_{n}}w_{n}\|_{L^{\infty}_{t}L^{2}_{x}} \\[1ex] 
        &\le C_{K}.
    \end{align*}
\end{proof}
Thus under the assumptions of Theorem \ref{limitexistence} we have the bounds \begin{equation}
    \|u_{n}\|_{L^{\infty}(0,T; W^{1,1}(K))} + \|\rho_{n}u_{n}\|_{L^{\infty}_{t}L^{1}_{x}} + \|\rho_{n}w_{n}^{2}\|_{L^{\infty}_{t}L^{1}_{x}} + \|\pi_{n}\|_{L^{\infty}(0,T; W^{1,2}(K))} +  \|\rho_{n}\|_{L^{\infty}((0,T) \times  K)} \le C_K. \label{weakbounds}
\end{equation} 
These bounds imply that there exists a triple $(\rho, u, \pi)$ such that up to a subsequence,
\begin{align}                   \label{RHOW}
    &\rho_{n} \rightharpoonup^{*} \rho \text{ weakly-* in }L^{\infty}( (0,T) \times K), \\[1ex]        \label{WWSTAR}
    &u_{n} \rightharpoonup^{*} u \text{ weakly-* in } L^{\infty}( (0,T) \times \mathbb{R}), \\[1ex] \label{PIWSTAR}
    &\pi_{n} \rightharpoonup^{*} \pi \text{ weakly-* in }L^{\infty}((0,T) \times K), \\[1ex]
    \label{DXPIWSTAR}
    &\partial_{x}\pi_{n} \rightharpoonup \partial_{x} \pi \text{ weakly in } L^{2}(0,T; L^{2}(K)),
\end{align} for any compact $K \subset \mathbb{R}$. Our next goal is to obtain a bound on $\partial_{t}\partial_{x}\pi_{n}$.
To this end, we acquire a local estimate on $\lambda_{n}\partial_{x}u_{n}$. Using the decomposition \eqref{pmdecomposition}, we can argue that for any $\phi \in \mathcal{D}(\mathbb{R})$, 
\begin{equation}
    \int_{0}^{t}\int_{\mathbb{R}} |\phi \lambda_{n}\partial_{x}u_{n}|~dxds = 2\int_{0}^{t}\int_{\mathbb{R}} (\phi \lambda_{n}\partial_{x}u_{n})_{+}~dxds - \int_{0}^{t}\int_{\mathbb{R}} \phi \lambda_{n}\partial_{x}u_{n}~dxds.
\end{equation}
Using \eqref{VnPHI} and \eqref{VnONESIDE}, we get \begin{equation} \label{VnL1}
    \|\lambda_{n}(\rho_{n})\partial_{x}u_{n}\|_{L^{1}(0,T; L^{1}_{loc}(\mathbb{R}))} \le C.
\end{equation}
Now we differentiate in space the evolution equation for $\rho_{n}p_{n}(\rho_{n}) = \frac{\gamma_{n}+1}{\gamma_{n}}\pi_{n}(\rho_{n})$ which gives
\begin{equation} \label{dtdxpi}
    \partial_{t}\partial_{x}(\rho_{n}p_{n}(\rho_n)) + \partial_{x}^{2}(\rho_{n}p_{n}(\rho_n)u_{n}) + \partial_{x}(\lambda_{n}(\rho_n)\partial_{x}u_{n}) = 0.
\end{equation} 
Fixing an arbitrary $\phi \in W^{2,2}_{0}(\mathbb{R})$ with $\|\phi\|_{W^{2,2}_{0}(\mathbb{R})} = 1$, we now estimate the $W^{-2,2}(\mathbb{R})$ norm of $\partial_{t}\partial_{x}\pi_{n}$. Denoting by $(\cdot, \cdot)$ the $L^{2}(\mathbb{R})$ inner product, we have using \eqref{dtdxpi} that
\begin{align*}
    &\int_{0}^{t} \|\partial_{t}\partial_{x}\big(\rho_{n}p_{n}(\rho_n)\big)\|_{W^{-2,2}(\mathbb{R})}~ds \\
    &= -\int_{0}^{t} (\pi_{n}u_{n}, \phi'') - (\lambda_{n}(\rho_n)\partial_{x}u_{n}, \phi')~ds \\[1ex] 
    &\le T \|\pi_{n}u_{n}\|_{L^{\infty}_{t}L^{\infty}_{loc,x}} \|\phi''\|_{L^{1}(\mathbb{R})} + \|\lambda_{n}(\rho_n)\partial_{x}u_{n}\|_{L^{1}_{t}L^{1}_{loc,x}}\|\phi'\|_{L^{\infty}(\mathbb{R})} \le C,
\end{align*}
thanks to \eqref{VnL1} and \eqref{weakbounds}. Hence we have \begin{equation}
\|\partial_{t}\partial_{x}\pi_{n}\|_{L^{1}_{t}W^{-2,2}_{x}} \le C.
\end{equation}
Recalling \eqref{rhouCONV} and \eqref{RHOW}-\eqref{DXPIWSTAR}, we can pass to the limit in each term of \eqref{HCL-n1}-\eqref{HCL-n2} except for the final term of \eqref{HCL-n2}. In order to justify $u_{n} \partial_{x}\pi_{n} \to u \partial_{x} \pi$ in $\mathcal{D}'$, one might first attempt to apply Lemma \ref{lionsCC}. However, this will not work since if we take $(g_{n}, h_{n}) = (\partial_{x}\pi_{n}, u_{n})$ we do not have the weak-$\star$ convergence $\partial_{x}\pi_{n} \rightharpoonup \partial_{x}\pi$ in $L^{p}_{t}L^{\infty}_{x}$ for any $p \in [1, \infty]$. This convergence is required if we choose $q_{2} = 1$ (otherwise it is not clear if assumption (A2) is satisfied from \eqref{dxuL1est}). Nonetheless, we can use the following generalised compensated compactness result due to Moussa \cite{moussa2016some}:
\begin{lemma}[Proposition 3, \cite{moussa2016some}] Let $q \in [1, \infty], ~\alpha \in [1, +\infty)$, $T>0$ and $\Omega \subset \mathbb{R}$ a bounded open set with Lipschitz boundary. Consider two sequences $(a_{n})_{n}$ bounded in $L^{q}(0,T; W^{1,1}(\Omega))$ and $(b_{n})_{n}$ bounded in $L^{q'}(0,T; L^{\alpha'}(\Omega))$ respectively weakly or weak-$\star$ converging in these spaces to $a$ and $b$. If $(\partial_{t}b_{n})$ is bounded in $\mathcal{M}(0,T; H^{-m}(\Omega))$ for some $m \in \mathbb{N}$ then up to a subsequence we have the following weak-$\star$ convergence in $\mathcal{M}((0,T) \times \bar{\Omega})$ (i.e. with $C^{0}((0,T) \times \Omega)$ test functions):
\begin{equation}
    a_{n}b_{n} \longrightarrow ab,~~ \text{ as } n \to \infty.
\end{equation}
\end{lemma}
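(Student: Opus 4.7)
The plan is to establish this as a generalized Aubin-Lions-Simon compactness result, combining time regularity of $(b_n)$ (from the bound on $\partial_t b_n$ in $\mathcal{M}(0,T; H^{-m}(\Omega))$) with spatial regularity of $(a_n)$ (from the $W^{1,1}(\Omega)$ control in one space dimension), and passing to the limit in the product via a pairing in a negative Sobolev scale.

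First, I would extract strong time-compactness of $(b_n)$ via Simon's compactness lemma. In one space dimension, the embedding $H^{m'}_0(\Omega) \hookrightarrow\hookrightarrow L^{\alpha}(\Omega)$ is compact for any $m' \geq 1$, so by duality (using reflexivity) the embedding $L^{\alpha'}(\Omega) \hookrightarrow\hookrightarrow H^{-m'}(\Omega)$ is also compact. Choosing $m' \geq \max(m,1)$ yields the chain $L^{\alpha'}(\Omega) \hookrightarrow\hookrightarrow H^{-m'}(\Omega) \hookrightarrow H^{-m}(\Omega)$, and Simon's theorem applied to $(b_n)$ then gives, after extraction, $b_n \to b$ strongly in $L^{q'}(0,T; H^{-m'}(\Omega))$. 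Simultaneously, the one-dimensional Sobolev embedding $W^{1,1}(\Omega) \hookrightarrow C(\bar\Omega) \hookrightarrow L^{\alpha}(\Omega)$ shows that $(a_n)$ is uniformly bounded in $L^q(0,T; L^{\infty}(\Omega))$ and hence in $L^q(0,T; L^{\alpha}(\Omega))$, with $a_n \rightharpoonup a$ in this space.

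Next, for any test function $\phi \in C^0([0,T] \times \bar\Omega)$, I would decompose
\begin{equation*}
\int_0^T \! \int_\Omega (a_n b_n - ab)\phi \, dx \, dt = \int_0^T \! \int_\Omega a_n (b_n - b)\phi \, dx \, dt + \int_0^T \! \int_\Omega (a_n - a) b\phi \, dx \, dt.
\end{equation*}
The second integral vanishes as $n \to \infty$ by weak/weak-$\star$ convergence of $a_n$ in $L^q(0,T; L^{\alpha}(\Omega))$ paired against $b\phi \in L^{q'}(0,T; L^{\alpha'}(\Omega))$. A Stone-Weierstrass reduction to test functions of tensor-product form $\eta(t)\psi(x)$ (dense in $C^0([0,T]\times\bar\Omega)$) simplifies the bookkeeping for what follows.

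The main obstacle is passing to the limit in the first integral: we have strong convergence of $b_n$ in $L^{q'}(0,T; H^{-m'})$, but $a_n \phi$ is only bounded in $L^q(0,T; W^{1,1})$, and $W^{1,1}$ does not embed into $H^{m'}$ for any $m' > 0$, so the obvious duality pairing is not immediately available. To circumvent this I would introduce a spatial mollification $a_n^{\varepsilon} := a_n \star_x \rho_{\varepsilon}$. For each $\varepsilon > 0$ fixed, $a_n^{\varepsilon}\phi$ is uniformly bounded in $L^q(0,T; H^{m'}(\Omega))$ (the spatial derivatives at every order being controlled by $\|a_n\|_{L^1_x}$ times $\varepsilon$-dependent constants), so the $H^{m'} \times H^{-m'}$ pairing lets one send $n \to \infty$ in $\int a_n^{\varepsilon}(b_n - b)\phi$. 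The remainder $\int (a_n - a_n^{\varepsilon})(b_n - b)\phi$ is estimated uniformly in $n$ via the standard mollification bound $\|a_n - a_n^{\varepsilon}\|_{L^q_t L^{\infty}_x} \leq C\varepsilon^{\theta}$ coming from the $W^{1,1}$ control (together with the $C(\bar\Omega)$ embedding), combined with the uniform $L^{q'}(0,T; L^{\alpha'})$ bound on $b_n - b$. Taking $\varepsilon \to 0$ at the end closes the argument and yields the desired weak-$\star$ convergence $a_n b_n \to ab$ in $\mathcal{M}((0,T)\times\bar\Omega)$.
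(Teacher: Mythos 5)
This lemma is quoted in the paper from Moussa \cite{moussa2016some} and is not proved there, so I can only assess your argument on its own terms. Your overall strategy (mollify in space to exploit the $W^{1,1}$ bound, use Aubin--Lions--Simon in time for $b_n$, and close by an $H^{m'}_0\times H^{-m'}$ pairing) is a reasonable route, but two steps are genuinely problematic as written. First, the remainder estimate $\|a_n-a_n^{\varepsilon}\|_{L^q_tL^{\infty}_x}\le C\varepsilon^{\theta}$ is false: bounded sets of $W^{1,1}(\Omega)$ are \emph{not} uniformly equicontinuous (take $a_n$ a smoothed step of width $1/n$, with $\|\partial_x a_n\|_{L^1}\le 2$; then $\|a_n-a_n\star\rho_\varepsilon\|_{L^\infty}$ stays of order $1$ for every fixed $\varepsilon$ once $n\gg\varepsilon^{-1}$), so the $W^{1,1}$ control gives no $\varepsilon^{\theta}$ modulus in $L^\infty$. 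The remainder must instead be closed through $L^{\alpha}_x$: one has $\|a_n-a_n\star\rho_\varepsilon\|_{L^{\alpha}_x}\le \|a_n-a_n\star\rho_\varepsilon\|_{L^1_x}^{1/\alpha}\,\|a_n-a_n\star\rho_\varepsilon\|_{L^\infty_x}^{1-1/\alpha}\le C\,\varepsilon^{1/\alpha}\|a_n\|_{W^{1,1}_x}$, which is then paired with the $L^{q'}_tL^{\alpha'}_x$ bound on $b_n-b$. Note that this is exactly where the hypothesis $\alpha<\infty$ enters; your claimed $L^\infty$ estimate never uses it and would ``prove'' the excluded endpoint $\alpha=\infty$, which is a warning sign.

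Second, the assertion that Simon's lemma yields $b_n\to b$ strongly in $L^{q'}(0,T;H^{-m'})$ at the endpoint exponent $q'$ needs more than a citation, and the endpoint matters: strong convergence merely in $L^{s}_t(H^{-m'})$ for $s<q'$ is \emph{not} enough to kill $\int\langle b_n-b,\,a_n^{\varepsilon}\phi\rangle\,dt$ against a factor only bounded in $L^{q}_t$ (on the level of scalar functions, $g_n=h_n=n^{1/2}\mathbf{1}_{[0,1/n]}$ shows that $L^q$-boundedness of one factor plus convergence to $0$ in measure of the other does not give $\int g_nh_n\to0$). The claim can be rescued for $q'<\infty$ by first observing that the $\mathcal{M}(0,T;H^{-m})$ bound on $\partial_t b_n$, combined with the $L^{q'}_t$ bound, upgrades $b_n$ to a uniform $L^{\infty}(0,T;H^{-m})\subset L^{\infty}(0,T;H^{-m'})$ bound (bound $b_n(t)$ by $b_n(s)$ at a good time $s$ plus the total variation), after which interpolation of the $L^1_t(H^{-m'})$ compactness with the $L^\infty_t$ bound gives the endpoint; this step is missing from your write-up. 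Moreover, for $q=1$ (so $q'=\infty$), which the statement allows, this repair fails: with $\partial_t b_n$ only a measure, $b_n$ can have jumps at drifting times (e.g.\ $b_n=v\,\mathbf{1}_{[1/2+1/n,T]}(t)$), so there is no strong $L^\infty_t(H^{-m'})$ compactness, and your pairing against $a_n^{\varepsilon}\phi\in L^1_t(H^{m'})$ does not close without invoking the assumed \emph{weak} $L^1_t$ convergence of $a_n$ (equi-integrability in time). Minor points: the inclusion chain should read $H^{-m}\hookrightarrow H^{-m'}$ for $m'\ge m$ (you need it in this direction to transfer the bound on $\partial_t b_n$, not the one you wrote); the duality pairing requires $a_n^{\varepsilon}\phi\in H^{m'}_0(\Omega)$, so you must first reduce to smooth $\phi$ compactly supported in $\Omega$ and extend $a_n$ (Lipschitz boundary) before mollifying; and Schauder's theorem, not reflexivity, is what gives compactness of $L^{\alpha'}\hookrightarrow H^{-m'}$, including when $\alpha=1$.
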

Taking any $q \in [1,\infty]$, $\alpha = 2$, $a_{n} = u_{n}$ and $b_{n} = \partial_{x}\pi_{n}$ we obtain the weak-$\star$ convergence $u_{n}\partial_{x}\pi_{n} \rightharpoonup u \partial_{x} \pi$ in $\mathcal{M}((0,T) \times \bar{\Omega})$ and therefore also in $\mathcal{D}'((0,T) \times \mathbb{R})$. It now remains to verify \eqref{HCL-L3}. First note that from the lower bound \eqref{lbdensity} and the upper bound $\rho_{n} \le C^{\frac{1}{\gamma_{n}+1}}$ seen in \eqref{rhoLINFlocal}, we must have $0 \le \rho \le 1$ almost everywhere in $K \times [0,T]$, for any compact $K \subset \mathbb{R}$. This implies $0 \le \rho \le 1$ a.e. on $\mathbb{R}$. Finally, the switching relation \eqref{switchingrelation} can be obtained in an identical fashion to the proof of Theorem \ref{existencedual}. This concludes the proof of Theorem \ref{limitexistence}. \end{proof}

%%%%%%%%%%%%%%%%%%%%
\section{Final discussion on duality solutions}{\label{sec:final}}

\subsection{Comparing weak and duality solutions}
A-priori, duality solutions are not weak solutions since we cannot make sense of the product $m u$ in the distributional sense. It is natural to then wonder whether there is a relationship between the weak and duality solutions that we have obtained for the limit system. In this subsection we mention two results which help in understanding the connection between the two notions of solution. Firstly, recall that Proposition~\ref{weakimpliesduality} tells us that weak solutions to the continuity equation are duality solutions. We now provide a proof of this result.
\begin{proof}[Proof of Proposition \ref{weakimpliesduality}]
    Fix $\tau \in (0,T]$ and a reversible solution $p \in \text{Lip}((0,T) \times \mathbb{R})$ to \begin{equation} \label{reversibleeqn}
    \begin{cases}
 \partial_{t}p + a\partial_{x}p = 0 &\text{ in } (0, \tau) \times \mathbb{R}, \\
p(\tau, \cdot) = p^{\tau},
\end{cases}
\end{equation}where the final data $p^{\tau}$ is arbitrary. Taking an arbitrary $t \in (0,\tau]$ and $f=p$ in \eqref{weakcont} gives
\begin{equation}
     \int_{\mathbb{R}} f\phi(t,x) - f\phi(0,x)~dx = \int_{0}^{t}\int_{\mathbb{R}} f( \partial_{t}p +  u \partial_{x} p) ~dxds = 0.
\end{equation} Since $t$ was arbitrary we conclude that $t \mapsto \int_{\mathbb{R}} fp (t,x) ~dx$ is constant on $(0,\tau]$ and thus $f$ is a duality solution.
\end{proof}
We immediately deduce the following implication.
\begin{corollary}
    The weak solution to the hard-congestion model obtained in Theorem \ref{limitexistence} is also a duality solution.
\end{corollary}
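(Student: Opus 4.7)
The plan is to verify that the triple $(\rho, m, \pi)$ with $m := \rho u + \partial_x \pi$ obtained in Theorem~\ref{limitexistence} satisfies all four items of Definition~\ref{defndualityHCL}. Items (i), (iii) and (iv) will follow directly from the statement of Theorem~\ref{limitexistence}: the continuity equation with $0 \le \rho \le 1$ is part of the weak formulation, the switching relation $(1-\rho)\pi = 0$ is explicitly established there, and the non-linear coupling $m = \rho u + \partial_x \pi$ holds in the sense of measures by the very definition of $m$ together with the fact that $\partial_x \pi \in L^\infty(0,T; L^2_{loc}(\mathbb{R}))$ thanks to \eqref{dxpiL2}. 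The only substantive point is therefore item (ii), namely that $m$ is a duality solution to $\partial_t m + \partial_x(m u) = 0$ with initial datum $m^0$.

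To establish (ii) I will invoke Proposition~\ref{weakimpliesduality} with $a = u$ and $f = m$. The coefficient $u$ lies in $L^\infty((0,T) \times \mathbb{R})$ by the weak-$\star$ convergence \eqref{WWSTAR} combined with the uniform bound \eqref{unLinf}, and the one-sided Lipschitz condition \eqref{OSLlimit} provides the OSLC with a constant right-hand side, which trivially belongs to $L^1(0,T)$. For the regularity of $m$, the estimates \eqref{weakbounds} and \eqref{dxpiL2} yield $\rho u \in L^\infty(0,T; L^1(\mathbb{R})) \cap L^\infty(0,T; L^\infty_{loc}(\mathbb{R}))$ and $\partial_x \pi \in L^\infty(0,T; L^2_{loc}(\mathbb{R}))$, hence $m \in L^\infty(0,T; L^1_{loc}(\mathbb{R}))$. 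The time continuity $m \in C([0,T]; L^1_{loc,w}(\mathbb{R}))$ will then be deduced from the distributional identity $\partial_t m = -\partial_x(m u)$, which bounds $\partial_t m$ in $L^\infty(0,T; W^{-1,\infty}_{loc}(\mathbb{R}))$ and yields Lipschitz continuity of $t \mapsto \langle m(t), \phi \rangle$ for every $\phi \in C^\infty_c(\mathbb{R})$.

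The final step is to upgrade the distributional form of \eqref{HCL-L2} to the integrated weak identity \eqref{weakcont} tested against $\phi \in W^{1,\infty}([0,T] \times \mathbb{R})$. I will do this by localising in space with a smooth cut-off (the $L^1_{loc}$ bound on $m$ and the $L^\infty$ bound on $u$ control the cut-off error), then mollifying in time to recover the endpoint traces, and finally extending by density from $C^\infty_c$ to $W^{1,\infty}$ test functions. Once the three hypotheses of Proposition~\ref{weakimpliesduality} are verified, the conclusion is immediate. The main obstacle I anticipate is precisely this last trace-and-density step: since $m$ is only a measure-like object and its initial value $m^0$ is itself a measure, one must identify $m(t,\cdot) \to m^0$ as $t \to 0^+$ carefully in the $L^1_{loc,w}$ topology; everything else is a routine combination of the uniform estimates from Section~\ref{sectionUNIF} with the abstract equivalence of weak and duality solutions furnished by Proposition~\ref{weakimpliesduality}.
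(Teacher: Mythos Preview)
Your proposal is correct and follows essentially the same route as the paper: define $m := \rho u + \partial_x \pi$, read off items (i), (iii), (iv) of Definition~\ref{defndualityHCL} from Theorem~\ref{limitexistence}, and obtain item (ii) by applying Proposition~\ref{weakimpliesduality} to $m$ with coefficient $u$. The paper's argument is terser---it simply asserts $m \in C([0,T]; L^2_w(\mathbb{R}))$ (using the $L^2$ bound \eqref{dxpiL2} on $\partial_x\pi$) and invokes Proposition~\ref{weakimpliesduality} without spelling out the time-trace and density steps you describe; your more careful verification of those hypotheses is welcome but not a different strategy. One small remark: in the setting of Theorem~\ref{limitexistence} the initial datum $m^0 = \rho^0 u^0 + \partial_x\pi^0$ is in fact an $L^2$ function (by \eqref{idconvergence2}--\eqref{idconvergence3}), not merely a measure, so the ``main obstacle'' you flag is milder than you suggest.
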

\begin{proof}
    Defining $m := \rho u + \partial_{x} \pi$ where $(\rho, u, \pi)$ corresponds to the weak solution obtained in Theorem~\ref{limitexistence}, we have that $m$ is a weak solution to $\partial_{t}m + \partial_{x}(mu) = 0$ belonging to the space $C([0,T]; L^{2}_{w}(\mathbb{R}))$. Lemma~\ref{weakimpliesduality} implies that $m$ is also a duality solution to the same equation. Thus we obtain that $(\rho, m, \pi)$ is a duality solution to the hard-congestion model.
\end{proof}
The following result which is analogous to what can be seen in \cite{jamesaggregationvauch,bouchut1998one} provides some sort of equivalence between weak and duality solutions in the case where $u$ is piecewise continuous.
\begin{proposition}[\cite{jamesaggregationvauch, bouchut1998one}]
    Assuming that $u$ satisfies the OSL condition \eqref{OSLC} and is piecewise continuous on $(0,T) \times \mathbb{R}$ where the set of discontinuity is locally finite. Then there exists a function $\hat{u}$ which coincides with $u$ on the set of continuity of $u$. Additionally, $(\rho, m, \pi) \in \mathcal{S}_{L} \times \mathcal{S}_{\mathcal{M}} \times \mathcal{T}_{BV}$ with $0 \le \rho \le 1,~ \pi \ge 0$ is a duality solution to the hard-congestion model with velocity $u$ if and only if 
    \begin{equation} \label{dualHCLinD'}
    \begin{cases}
 \partial_{t}\rho + \partial_{x}(\rho \hat{u}) = 0 &\text{ in } \mathcal{D}', \\
\partial_{t}m + \partial_{x}(m \hat{u}) = 0 &\text{ in } \mathcal{D}', \\
 (1-\rho)\pi = 0 &\text{ a.e.}, \\
m = \rho \hat{u} + \partial_{x}\pi &\text{ in } \mathcal{M}.
\end{cases}
\end{equation}
 We also have that $u ~\triangle ~\rho = \hat{u} \rho$ and $u ~\triangle ~m = \hat{u} m$. In particular, $\hat{u}$ is a universal representative of $u$.
\end{proposition}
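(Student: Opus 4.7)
The equivalence splits into a forward direction, which is essentially a direct consequence of Theorem~\ref{DUALITYresults}(2), and a backward direction, which requires a uniqueness argument at the level of measures. The preliminary step is constructing $\hat{u}$: by piecewise continuity of $u$, set $\hat{u} := u$ on the continuity set, and at each of the locally finite discontinuities define $\hat{u}$ by the Filippov prescription, which is well-posed because the OSL condition~\eqref{OSLC} forces every admissible jump to be compressive (so the jump speed is uniquely determined by the one-sided traces). The resulting $\hat{u}$ is a bounded Borel function with $\hat{u}=u$ almost everywhere, and one checks that it coincides with the universal representative supplied by Theorem~\ref{DUALITYresults}(2) by evaluating $u \triangle \mu$ against smooth test functions using the generalised backward flow from Bouchut--James~\cite{bouchut1998one}.

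For the forward direction, assume $(\rho,m,\pi)$ is a duality solution in the sense of Definition~\ref{defndualityHCL}. Condition~(i) gives $\partial_t\rho + \partial_x(\rho u)=0$ in $\mathcal{D}'$, which, since $\rho \in L^\infty$ and $\hat u = u$ a.e., immediately reads $\partial_t\rho + \partial_x(\rho \hat u)=0$ in $\mathcal{D}'$. Condition~(ii) together with the flux identity $u \triangle m = \hat u\, m$ from Theorem~\ref{DUALITYresults}(2) yields the distributional equation for $m$ in~\eqref{dualHCLinD'}. Conditions~(iii) and~(iv) transfer verbatim, the coupling $m = \rho u + \partial_x\pi$ becoming $m = \rho\hat u + \partial_x\pi$ as measures because the two expressions $\rho u$ and $\rho\hat u$ agree a.e.\ as bounded functions.

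For the backward direction, suppose that~\eqref{dualHCLinD'} holds. The continuity equation for $\rho$ with velocity $u$ follows immediately from the one with $\hat u$ since $\rho\hat u = \rho u$ a.e. The delicate step is recovering the duality property for $m$: let $\tilde m$ denote the unique duality solution given by Theorem~\ref{DUALITYresults}(1) associated to the initial data $m(0,\cdot)\in \mathcal{M}_{loc}(\mathbb{R})$ and the velocity $u$. By Theorem~\ref{DUALITYresults}(2), $\tilde m$ itself satisfies $\partial_t\tilde m + \partial_x(\hat u\,\tilde m)=0$ in $\mathcal{D}'$. Hence the difference $m-\tilde m \in \mathcal{S}_{\mathcal M}$ solves the same distributional conservation law with zero initial data; invoking the uniqueness result of Poupaud--Rascle~\cite{poupaud1997measure} for measure-valued solutions of conservative transport with an OSL coefficient, I conclude $m=\tilde m$, so that $m$ is a duality solution. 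The identities $u \triangle \rho = \hat u\rho$ and $u \triangle m = \hat u m$ together with the construction of $\hat{u}$ then confirm that $\hat u$ is a universal representative of $u$.

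The main obstacle is this backward step: without the piecewise continuity of $u$ there would be no pointwise $\hat u$ to substitute into a distributional formulation, and without a Poupaud--Rascle-type uniqueness principle one could not bridge the gap between a distributional and a duality solution at the level of measures. Everything else (the Filippov construction of $\hat u$, the transfer of the switching relation and of the coupling, the identification of the continuity equation) is routine once these two ingredients are in place.
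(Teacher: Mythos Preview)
The paper does not supply a proof of this proposition: it is stated as a result imported from \cite{jamesaggregationvauch,bouchut1998one} and is followed only by a one-sentence interpretive remark. There is therefore nothing in the paper to compare your argument against.

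That said, your outline is a reasonable reconstruction of how the argument runs in those references, and the forward direction is correct and essentially immediate from Theorem~\ref{DUALITYresults}(2). The backward direction, however, has a gap in the way you have phrased it. The Poupaud--Rascle result you invoke is a uniqueness statement for the \emph{pushforward} solution under the Filippov flow; it is not a uniqueness theorem for arbitrary measures satisfying $\partial_t\mu+\partial_x(\hat u\,\mu)=0$ in $\mathcal D'$ with a fixed Borel representative $\hat u$. The difference matters because $\hat u$ and $u$ agree only Lebesgue-a.e., while $m$ may charge the discontinuity set of $u$, so knowing the distributional equation with $\hat u$ does not automatically place $m$ in the Poupaud--Rascle class. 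What actually closes the loop in the piecewise continuous case (this is the content of the relevant results in \cite{bouchut1998one}) is a direct structural argument: away from the locally finitely many discontinuity curves the equation is classical transport, and across each curve the Rankine--Hugoniot condition forces any atom to travel at the speed $\hat u$ was assigned there, which in turn matches the Filippov characteristic speed. Once that is verified, the identification with the unique duality solution follows. So the missing ingredient is not an abstract uniqueness principle but an explicit check, specific to piecewise continuous $u$, that the distributional formulation with your $\hat u$ coincides with the Filippov/duality formulation.
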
 This result essentially says that $(\rho, m, \pi)$ is a duality solution to the hard-congestion model with velocity $u$ if and only if $(\rho, \hat{u}, \pi) $ is a weak solution to the same model.

%%%%%%%%%%%%%%%%%%%%%%
\subsection{(Non-)uniqueness of weak and duality solutions}{\label{sec:uniqueness}}
For a single conservation law with a fixed velocity $u$, uniqueness holds for duality solutions as long as $\alpha \in L^{1}(0,T)$ (see Theorem \ref{DUALITYresults}). This is a direct consequence of the definition of duality solutions. When dealing with a system of equations however, uniqueness is not so straightforward. In \cite{franccois1999duality}, Bouchut and James provided a definition for duality solutions to the system of pressureless gases and showed that they are also unique as long as $\alpha \in L^{1}(0,T)$. If $\alpha$ is not integrable at time $0$,
% \in L^{1}_{loc}(0,T)$ instead of $L^{1}(0,T)$,
however, then uniqueness is lost for their system. 
Notice that their system is similar to ours since in the region $\{\rho < 1 \}$ our equations are formally reduced to a system of pressureless gases.

In contrast, we do not have uniqueness of weak or duality solutions to the hard-congestion model even for $\alpha \in L^{1}(0,T)$ and smooth initial data. We now describe one possible construction of a counterexample. Suppose that the velocity $u$ is constant in space, i.e. $u(t,x) = c(t)$ and additionally $c(0)=0$. Take an arbitrary $f \in BV_{loc}(\mathbb{R})$ such that $\min(f(x),f(x) - c(t)x) \ge 0$ on $[0,T]$.  Then considering the initial data $(\rho^{0}, m^{0}, \pi^{0}) = (1, f'(x),f(x))$ we have the solution
\begin{equation} \label{counterex1a} \textstyle
    (\rho_{1}, m_{1}, \pi_{1}; u_{1}) = (1,  ~ f'(x+\int_{0}^{t}c(s)ds), ~ f(x+ \int_{0}^{t}c(s)ds)-c(t)x; ~ c(t))
\end{equation} as well as
\begin{equation} \label{counterex1b} 
    (\rho_{2}, m_{2}, \pi_{2}; u_{2}) = (1, f'(x),~f(x);~ 0).
\end{equation}
\begin{remark}
It is a little easier to construct a counterexample to uniqueness on short time intervals. We simply need to assume $u=c(t)$ (with $c(0)=0$ and $c \ne 0$) is continuous and take any $f \in BV_{loc}(\mathbb{R})$ with $f > 0$. Then we have that \eqref{counterex1a} and \eqref{counterex1b} are both duality solutions (at least on some short time interval) with initial data $(\rho^{0}, m^{0}, \pi^{0}) = (1, f'(x),f(x))$.
\end{remark}
For a concrete example to non-uniqueness on a fixed time interval $[0,T]$, consider the initial data $(\rho^{0}, m^{0}, \pi^{0}) = (1, 2xe^{x^{2}}, e^{x^{2}}).$ We have the constant solution given by 
\begin{equation}
    (\rho_{1}, m_{1}, \pi_{1}; u_{1}) = (1, 2xe^{x^{2}},  e^{x^{2}};~ 0), \quad \text{ on } (0,T) \times \mathbb{R},
\end{equation} as well as
\begin{equation} \textstyle
    (\rho_{2}, m_{2}, \pi_{2}; u_{2}) = (1,  ~2(x+\frac{t^{2}}{2T})e^{(x+\frac{t^{2}}{2T})^{2}}, e^{(x+\frac{t^{2}}{2T})^{2}}+\frac{tx}{T};~ -\frac{t}{T}), \quad \text{ on } (0,T) \times \mathbb{R}.
\end{equation} The scaling of $1/T$ in the second example ensures $\pi_{1} \ge 0$ on $[0,T]$. Note that in the above examples, the system is initially fully congested, i.e. $\rho^{0} \equiv 1$. It is yet to be determined whether an example of non-uniqueness can be constructed for a more general initial density. In particular,  there does not seem to be a straightforward way to extend the above examples to cover this case. We postpone a more detailed discussion of uniqueness for a future work. To finish the section, we mention a simple criterion for uniqueness covering the case where congestion is not created or, if present initially, simply transported by the flow.
\begin{lemma}
    Let $(\rho, m, \pi)$ be a duality solution to the hard-congestion model with velocity $u$. Suppose we additionally have that $\rho u$ (or $\partial_{x}\pi$) is a duality solution to the conservation law $\partial_{t}f+\partial_{x}(fu)=0$ on $[0,T] \times \mathbb{R}$. Then $(\rho, m, \pi)$ is uniquely determined by the initial data, where $\pi$ is unique up to a constant.
\end{lemma}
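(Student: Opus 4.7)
The strategy is to reduce uniqueness of $(\rho, m, \pi)$ (modulo an additive constant in $\pi$) to the uniqueness theorem for duality solutions of a linear conservation law with prescribed velocity, namely item 1 of Theorem~\ref{DUALITYresults}, by exploiting the linearity of the duality framework in the measure variable.

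First, I would promote $\rho$ to a duality solution: since $\rho \in \mathcal{S}_L \subset C([0,T]; L^1_{loc,w}(\mathbb{R}))$ satisfies $\partial_t \rho + \partial_x(\rho u) = 0$ distributionally by (i) of Definition~\ref{defndualityHCL}, Proposition~\ref{weakimpliesduality} yields that it is in fact a duality solution with initial datum $\rho^0$. By (ii) of the same definition, $m$ is already a duality solution with initial datum $m^0$. Since the velocity $u$ is bounded and satisfies the one-sided Lipschitz bound required in Definition~\ref{defndualityHCL}, item 1 of Theorem~\ref{DUALITYresults} tells us that $\rho$ and $m$ are the \emph{unique} duality solutions in $\mathcal{S}_{\mathcal{M}}$ to their respective transport equations with the prescribed initial data.

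Next, I would exploit the hypothesis via linearity. The duality condition
\begin{equation*}
\frac{d}{dt}\int_{\mathbb{R}} p(t,x)\,\mu(t,dx) = 0 \quad \text{for every reversible solution } p
\end{equation*}
is linear in $\mu$, so the class of duality solutions (with a fixed velocity $u$) is a vector space. Because $m = \rho u + \partial_x \pi$ in the sense of measures, assuming that one of $\rho u$ or $\partial_x \pi$ is a duality solution of $\partial_t f + \partial_x(fu) = 0$ forces the other to be a duality solution of the same equation, with initial data $m^0 - \partial_x \pi^0$ and $\partial_x \pi^0$ respectively (both determined by the prescribed initial triple). A second application of item 1 of Theorem~\ref{DUALITYresults} to these conservation laws then pins down both $\rho u$ and $\partial_x \pi$ uniquely, and integrating $\partial_x \pi$ in space recovers $\pi$ up to an additive constant, as claimed.

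The main obstacle, and the reason the hypothesis is essential, is that a priori the measure $\partial_x \pi$ (although well-defined as the distributional derivative of $\pi \in BV_{loc}$) has no intrinsic characterisation as a duality solution, so no uniqueness statement applies to it directly. The assumption that $\rho u$, or equivalently $\partial_x \pi$, lies in the duality-solution class is precisely what unlocks the linearity argument above and brings the missing piece within reach of Theorem~\ref{DUALITYresults}. A subsidiary subtlety worth noting is that this uniqueness is relative to a common velocity $u$: different velocities may in principle yield different triples, consistent with the non-uniqueness examples exhibited earlier in this section where the flux $\rho u$ fails the duality-solution property.
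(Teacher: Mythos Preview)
Your argument has a genuine gap: it establishes uniqueness only once the velocity $u$ is fixed, but in Definition~\ref{defndualityHCL} the velocity is part of the unknown. Two duality solutions with the same initial triple $(\rho^0, m^0, \pi^0)$ may a priori come with different velocities $u_1 \neq u_2$, and item~1 of Theorem~\ref{DUALITYresults}---uniqueness for a single conservation law with \emph{prescribed} coefficient---says nothing in that situation. You flag this yourself in the final paragraph, but it is not a ``subsidiary subtlety'': it is the whole point. Indeed, for a fixed $u$ the triple $(\rho, m, \pi)$ is already determined by the initial data via Theorem~\ref{DUALITYresults} and the algebraic relation $\partial_x\pi = m - \rho u$, with no need whatsoever for the hypothesis that $\rho u$ be a duality solution; under your reading the assumption would be superfluous.

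The paper closes this gap by a different route. Once $\rho u$ is itself a duality solution, the pair $(\rho, \rho u)$ solves the pressureless gas \emph{system} of Bouchut and James~\cite{franccois1999duality}, and their uniqueness theorem for that system---which does not presuppose a fixed velocity---determines $(\rho, \rho u)$, and hence $u$ on the support of $\rho$, from $(\rho^0, m^0 - \partial_x\pi^0)$ alone. The switching relation $(1-\rho)\pi = 0$ forces the support of $\partial_x\pi$ to lie inside $\{\rho = 1\}$, so $u$ is now determined wherever $\partial_x\pi$ lives; only at this stage can the single-equation uniqueness of Theorem~\ref{DUALITYresults} be legitimately invoked to pin down $\partial_x\pi$, and thereby $m$ and $\pi$. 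The appeal to the Bouchut--James pressureless-gas uniqueness result is the missing ingredient in your approach.
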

\begin{proof} Suppose that $\rho u$ is a duality solution. Then $(\rho, q)$ solves the system of pressureless gases 
        \begin{equation} \label{BJsystem}
            \begin{cases}
                \partial_{t}\rho + \partial_{x}q = 0, \quad \text{ on } (0,T) \times \mathbb{R}, \\[1ex]
                 \partial_{t}q + \partial_{x}(q u) = 0, \quad \text{ on } (0,T) \times \mathbb{R}, \\[1ex]
                 u \rho = q,
            \end{cases}
        \end{equation} 
        in the duality sense of Bouchut and James \cite{franccois1999duality}. 
        Using the uniqueness result of \cite{franccois1999duality} (see Theorem 2.8, our initial density $\rho^0$ being non-atomic), we find that $(\rho, \rho u)$ is uniquely determined by the initial data $(\rho^{0}, m^{0}-\partial_{x}\pi^0)$. Additionally, since the space of duality solutions form a vector space, the relationship $m = \rho u + \partial_{x}\pi$ implies that $\partial_{x}\pi$ is also a duality solution to the problem
\begin{equation} \label{dxpiCONS}
    \begin{cases}
         \partial_{t}\partial_{x}\pi + \partial_{x}(u \partial_{x}\pi) = 0, \quad \text{ on } (0,T) \times \mathbb{R}, \\[1ex]
                \partial_{x}\pi(0,\cdot) = \partial_{x}\pi^{0} \in \mathcal{M}_{loc}(\mathbb{R}).
    \end{cases}
\end{equation}
Note that since $(\rho, \rho u)$ uniquely solves \eqref{BJsystem}, we have that $u$ is uniquely determined on the support of $\rho$ and in particular on the support of $\pi$ (and therefore $\partial_{x}\pi)$. Thus, the velocity $u$ associated with $\partial_{x}\pi$ is uniquely determined by the initial data $(\rho^{0}, m^{0}-\partial_{x}\pi^{0})$. Since uniqueness holds for duality solutions with a fixed velocity, we infer that $\partial_{x}\pi$ is also uniquely determined by the initial data and therefore so is $m = \rho u + \partial_{x}\pi$. On the other hand, suppose that $\partial_{x}\pi$ is a duality solution. Then so is $\rho u = m - \partial_{x}\pi$ and the pair $(\rho, q):= (\rho, \rho u)$ solves \eqref{BJsystem}.  Repeating the above argument leads to the desired conclusion.
\end{proof}

\begin{remark}
    Notice that this result does not hold true if we had defined the duality solution as a pair $(\rho, m)$ rather than a triple $(\rho, m, \pi)$. Indeed, consider the initial data $(\rho^{0}, m^{0})=(1, 2xe^{x^{2}})$. Then we have the trivial solution $(\rho_{1}, m_{1}; u_{1}, \pi_{1}) = (1,2xe^{x^{2}}; 0, e^{x^{2}})$ as well as \begin{equation}
      \textstyle   (\rho_{2}, m_{2}; u_{2}, \pi_{2}) = (1,2(x+\frac{t}{2T})e^{(x+\frac{t}{2T})^{2}}; ~-\frac{1}{2T},~ e^{(x+\frac{t}{2T})^{2}}+\frac{x}{2T}).
    \end{equation} This shows that even with a velocity which is constant in space and time, non-uniqueness may still occur if duality solutions are defined as a pair $(\rho, m)$. This is in contrast with the above lemma, since if $u$ is constant then $\rho u$ is necessarily a duality solution and so uniqueness holds for the triple $(\rho, m, \pi)$.
\end{remark}

\subsection*{Acknowledgement}
The work of C. P. is supported by the BOURGEONS and CRISIS projects, grants ANR-23-CE40-0014-01 and ANR-20-CE40-0020-01 of the French National Research Agency (ANR).
The research of NC and EZ  was supported by the EPSRC Early Career Fellowship  EP/V000586/1. Also, the work of NC was partly supported by the ``Excellence Initiative Research University (IDUB)" program at the University of Warsaw.

\bibliography{bib}
\end{document}